\newtheorem{theorem}{Theorem}
\newtheorem{corollary}[theorem]{Corollary}
\newtheorem{observation}[theorem]{Observation}
\newtheorem{conjecture}[theorem]{Conjecture}
\newtheorem{lemma}[theorem]{Lemma}
\newtheorem{megaclaim}{Claim}
\newcommand{\claim}[2]{\begin{megaclaim}\label{#1} #2 \end{megaclaim}}
\newcommand{\refclaim}[1]{Claim~\ref{#1}}
\newcommand{\cin}{\mbox{int}}
\newcommand{\cex}{\mbox{ext}}
\newcommand{\defi}{\mbox{def}}
\newcommand{\GG}{\mathcal{G}}
\newenvironment{subproof}{%
  \begin{proof}[Subproof]%
}{%
  \end{proof}%
}
\title{$(3a:a)$-list-colorability of embedded graphs of girth at least five}
\author{Zden\v{e}k Dvo\v{r}\'{a}k\thanks{Computer Science Institute (CSI) of Charles University,
           Malostransk\'{e} n\'am\v{e}st\'{\i} 25, 118 00 Prague,
	              Czech Republic. E-mail: \protect\href{mailto:rakdver@iuuk.mff.cuni.cz}{\protect\nolinkurl{rakdver@iuuk.mff.cuni.cz}}.
	Supported by project 17-04611S (Ramsey-like aspects of graph coloring) of Czech Science Foundation.}\and 
	Xiaolan Hu\thanks{School of Mathematics and Statistics $\&$ Hubei Key Laboratory of Mathematical Sciences, Central China Normal University,
	Wuhan 430079, PR China.  Partially supported by NSFC under grant number 11601176 and NSF of Hubei Province under grant
	number 2016CFB146.}
}
\date{}
\begin{document}
\maketitle

\begin{abstract}
A graph $G$ is \emph{list $(b:a)$-colorable} if for every assignment of lists
of size $b$ to vertices of $G$, there exists a choice of an
$a$-element subset of the list at each vertex such that
the subsets chosen at adjacent vertices are disjoint.  We prove that for every positive integer $a$,
the family of minimal obstructions of girth at least five to list $(3a:a)$-colorability is strongly hyperbolic, in the sense
of the hyperbolicity theory developed by Postle and Thomas.  This has a number of consequences, e.g.,
that if a graph of girth at least five and Euler genus $g$ is not list $(3a:a)$-colorable, then $G$ contains a
subgraph with $O(g)$ vertices which is not list $(3a:a)$-colorable.
\end{abstract}

While it is NP-hard to decide whether a planar graph is $3$-colorable~\cite{garey1979computers},
Grötzsch~\cite{grotzsch1959} proved that every planar triangle-free graph is $3$-colorable.  These facts motivated
the development of a rich theory of $3$-colorability of triangle-free embedded graphs.
Thomassen~\cite{thomassen-surf} proved that there are only finitely many $4$-critical graphs
(minimal obstructions to $3$-colorability) of girth at least five drawn in any fixed surface.
For triangle-free graphs, the situation is more complicated, as there are infinitely many
minimal triangle-free non-$3$-colorable graphs which can be drawn in any surface other than the sphere.
Nevertheless, Dvořák, Král' and Thomas~\cite{trfree4} gave a rough structural characterization of $4$-critical triangle-free embedded graphs,
sufficient to design a linear-time algorithm to test $3$-colorability of triangle-free graphs drawn in a fixed surface~\cite{trfree7}.
A more detailed description of such $4$-critical graphs was given by Dvořák and Lidický~\cite{cylgen-part3}.

Before discussing further questions related to $3$-colorability of embedded triangle-free graphs, let us briefly introduce
the theory of hyperbolicity developed by Postle and Thomas~\cite{PosThoHyperb}.  A class $\GG$ of graphs embedded in closed surfaces (which possibly can have a boundary)
is \emph{hyperbolic} if there exists a constant $c_{\GG}$ such that for each graph $G\in\GG$ embedded in a surface $\Sigma$
and each open disk $\Lambda\subset \Sigma$ whose boundary $\partial \Lambda$ intersects $G$ only in vertices, the number of vertices of $G$ in $\Lambda$
is at most $c_{\GG}(|\partial \Lambda\cap G|-1)$.  The class is \emph{strongly hyperbolic} if the same holds for all sets $\Lambda\subset\Sigma$ homeomorphic to an open cylinder
(sphere with two holes).  The importance of these notions in the study of graph colorings stems from the following facts.
Firstly, the classes of minimal obstructions to many kinds of colorings form strongly hyperbolic families.  This is the case e.g. for
$6$-critical graphs (minimal obstructions to $5$-colorability)~\cite{pothom,lukethe} and for $4$-critical graphs of girth at least five~\cite{trfree3}.
Secondly, hyperbolicity or strong hyperbolicity is often relatively easy to establish, as it only involves
dealing with the planar subgraphs drawn in $\Lambda$.  Finally, hyperbolicity and strong hyperbolicity has a number of important
consequences, such as the following.  Recall the \emph{edge-width} of a graph drawn in a surface is the length of a shortest
non-contractible cycle of the graph.
\begin{theorem}[Postle and Thomas~\cite{PosThoHyperb}]
If $\GG$ is a hyperbolic class, then each graph in $\GG$ drawn in a surface without boundary of Euler genus $g$ has edge-width $O(\log g)$.
\end{theorem}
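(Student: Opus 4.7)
The plan is to prove the theorem by combining a local exponential ball-growth estimate with a global vertex count, both consequences of the hyperbolicity inequality. Throughout let $G \in \GG$ be embedded in a closed surface $\Sigma$ of Euler genus $g \geq 1$ with edge-width $\omega$, and write $c = c_{\GG}$.

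\emph{Local exponential growth.} Fix a vertex $v \in V(G)$, and for $r \geq 0$ let $B_r$ and $S_r$ denote the sets of vertices at distance at most $r$ and exactly $r$ from $v$, respectively. For $r \leq \lfloor (\omega-1)/2 \rfloor$ every cycle contained in the subgraph on $B_r$ has length at most $2r < \omega$ and is therefore contractible; a BFS argument then produces an open disk $\Lambda_r \subset \Sigma$ whose boundary meets $G$ in exactly $S_r$ and whose interior contains $B_{r-1}$. Applying the hyperbolicity inequality to $\Lambda_r$ gives $|B_{r-1}| \leq c(|S_r|-1)$, and so
\[
|B_r| = |B_{r-1}| + |S_r| \;\geq\; \bigl(1 + \tfrac{1}{c}\bigr)|B_{r-1}|.
\]
Iterating the recurrence yields $|V(G)| \geq |B_{\lfloor(\omega-1)/2\rfloor}| \geq (1+1/c)^{\Omega(\omega)}$.

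\emph{Global vertex bound.} Next I would derive $|V(G)| = O(g\omega)$ from the same inequality applied on a macroscopic scale. The plan is to construct a family of $O(g)$ non-contractible simple cycles of $G$, of total length $O(g\omega)$, whose union carves $\Sigma$ into a single topological disk. Such a system exists by iteratively cutting along a shortest non-contractible cycle of the current cut-open surface: each shortest cycle has length at most $\omega$, and each cut reduces the Euler genus by at least one, so the process terminates in $O(g)$ steps. Inflating this union slightly produces an open disk $\Lambda^\star \subset \Sigma$ whose boundary meets $G$ in a set $X$ of $O(g\omega)$ vertices and whose interior contains $V(G) \setminus X$. Hyperbolicity applied to $\Lambda^\star$ gives $|V(G) \setminus X| \leq c(|X|-1)$, so $|V(G)| \leq (c+1)|X| = O(g\omega)$.

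\emph{Conclusion.} Combining the two bounds,
\[
(1 + 1/c)^{\Omega(\omega)} \;\leq\; |V(G)| \;\leq\; O(g\omega),
\]
which forces $\omega = O(\log g)$.

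The main obstacle is the global step: producing the cutting system with the stated quantitative bounds while ensuring that the resulting boundary meets $G$ only in vertices, as required by the definition of hyperbolicity. The sensitive point is verifying that each shortest non-contractible cycle of the sequentially cut-open surface still has length $O(\omega)$, so that the total aggregates to $O(g\omega)$ rather than blowing up with the number of cuts; this I would handle by an induction on genus, tracking carefully how cutting along a cycle modifies the edge-width of the resulting surface with boundary and slightly extending the definition of hyperbolicity to the cut-open setting.
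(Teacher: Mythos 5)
This theorem is quoted from Postle and Thomas~\cite{PosThoHyperb} and is not proved in the paper, so there is no in-paper argument to compare against; I can only assess your proposal on its own terms, and it has a fatal gap.

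The global vertex bound $|V(G)| = O(g\omega)$ is not merely unproved but false, so your closing inequality cannot be established by any choice of cutting system. Hyperbolicity controls only disks; it places no bound on the number of vertices living in cylindrical regions that wrap a handle. For a concrete family, take $G_m$ to be a $5\times m$ grid drawn on the torus (rows indexed mod $5$, columns mod $m$), $m\ge 6$: the Euler genus $g=2$ and edge-width $\omega=5$ are fixed, while $|V(G_m)|=5m$ is unbounded. This family is hyperbolic with a constant independent of $m$, because any open disk whose boundary meets $G_m$ only in vertices cannot contain a full column (a noncontractible $5$-cycle), so each of its $\ell$ columns contributes $O(1)$ interior vertices while contributing $\Omega(1)$ boundary vertices, giving interior $=O(\text{boundary})$. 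Hence $|V(G_m)|$ is not $O(g\omega)$. The ``sensitive point'' you flag---that cuts in the cut-open surface may grow long---is therefore not a technicality to be patched by a clever induction on genus; it reflects the fact that a hyperbolic (as opposed to strongly hyperbolic) family has no global vertex bound in terms of genus. Indeed Theorem~\ref{thm-potsmall} provides such a bound precisely under the stronger hypothesis, which is the content of that distinction.

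There is also an error in the local step: it is not true that every cycle contained in the subgraph on $B_r$ has length at most $2r$. A cycle in the ball need not be geodesic; it can wander within distance $r$ of $v$ and be arbitrarily long, and indeed for $r=\lfloor(\omega-1)/2\rfloor$ the ball can already contain a noncontractible cycle (in the torus-grid example, $B_2(v)$ contains the noncontractible $5$-cycle through the row of $v$), in which case the disk $\Lambda_r$ you require does not exist. The exponential-growth phenomenon is real and can be salvaged at somewhat smaller radii with a more careful treatment of the BFS structure, but the justification as written is incorrect. Since the local step is repairable while the global step is not, the whole strategy of bounding $|V(G)|$ and comparing it with ball growth has to be abandoned; the Postle--Thomas proof avoids any global vertex count, working instead with the structure of $G$ near a shortest noncontractible cycle and an exponential-decay estimate for vertices far from the boundary of a disk.
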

For example, since $4$-critical graphs of girth at least five are strongly hyperbolic~\cite{trfree3}, there exists a constant $c_g=O(\log g)$ such that every graph of girth at least five drawn in a surface of Euler genus
$g$ with edge-width at least $c_g$ is $3$-colorable.
\begin{theorem}[Dvořák and Kawarabayashi~\cite{dvokawalg}, Postle and Thomas~\cite{PosThoHyperb}]
For any hyperbolic class $\GG$, a surface $\Sigma$, and an integer $k$, there exists a linear-time algorithm to decide whether
a graph drawn in $\Sigma$ with at most $k$ vertices contained in the boundary of $\Sigma$ has a subgraph belonging to $\GG$.
\end{theorem}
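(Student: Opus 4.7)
Since this theorem is cited from \cite{dvokawalg,PosThoHyperb}, I sketch the approach at a high level. The plan is the standard two-pronged strategy for algorithms on surface-embedded graphs with a hyperbolic obstruction class: either the input $G$ has bounded treewidth and we solve the problem by dynamic programming, or we identify an \emph{irrelevant vertex} whose deletion does not change whether a subgraph in $\GG$ exists. Iterating the second option eventually forces the first.

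First I would compute in linear time the BFS layering of $G$ from its (at most $k$) boundary vertices, together with a set of short non-contractible cycles guaranteed by the edge-width consequence of hyperbolicity. Call a vertex $v$ \emph{deeply embedded} if there is a large combinatorial disk $B$ around $v$ in $G$ whose interior avoids the boundary and the chosen non-contractible cycles, and within which $v$ is separated from $\partial B$ by many concentric cycles. The key lemma to prove is the following: if $v$ is deeply embedded with depth larger than a threshold depending only on $c_{\GG}$, then no minimal subgraph $H \in \GG$ can contain $v$. The argument applies the hyperbolicity inequality repeatedly to the successive nested disks; each annulus forces at least one additional vertex of $H$ on its inner boundary, so the inner disk contains strictly more vertices of $H$, and after enough iterations this contradicts finiteness of $H$. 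Consequently such $v$ can be deleted without affecting the answer.

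After iteratively deleting all deeply embedded vertices, the resulting graph has every vertex within bounded distance of the boundary or of a short non-contractible curve, so its treewidth is bounded by a function of $g$, $k$, and $c_{\GG}$. A tree decomposition can then be computed in linear time, and a Courcelle-style dynamic programming routine enumerates for each bag the possible traces of a candidate subgraph in $\GG$; the state space per bag is bounded, since membership in $\GG$ is a constant-time test on graphs of bounded size. The main obstacle I expect is proving the irrelevance lemma in the general (not strongly) hyperbolic setting: hyperbolicity as stated controls only planar disks, so around a non-contractible curve one must combine the disk inequalities with a careful cut-and-paste argument that uses the edge-width bound, while also verifying that successive vertex deletions preserve the depth condition well enough to terminate in linearly many rounds.
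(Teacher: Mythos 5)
This statement is only cited in the paper (to Dvo\v{r}\'ak--Kawarabayashi and Postle--Thomas); no proof is given here, so there is no internal proof to compare against. I will therefore assess your sketch on its own terms.

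The high-level shape you describe --- identify irrelevant vertices protected by many nested cycles, delete them, then run bounded-treewidth dynamic programming on what remains --- is indeed the standard template for such algorithms. However, the heart of your argument, the irrelevance lemma, is not correct as you have stated it. You write that each annulus forces at least one additional vertex of $H$ on its inner boundary, so the inner disk contains strictly more vertices of $H$, contradicting finiteness of $H$. This reasoning goes the wrong way: the inner disks are nested inside one another, so going inward the vertex counts are nonincreasing, not increasing, and the best one can extract from ``$H$ must cross each concentric cycle'' is the lower bound $|V(H)\cap D_1| \ge m$ where $m$ is the number of concentric cycles. Combined with the hyperbolicity inequality $|V(H)\cap D_1| \le c_\GG(|V(H)\cap\partial D_1| - 1)$, this yields only that $|V(H)\cap\partial D_1|$ must be at least roughly $m/c_\GG$. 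That is not a contradiction, because for a merely hyperbolic (not strongly hyperbolic) family, $H$ can be arbitrarily large and $\partial D_1$ can carry arbitrarily many of its vertices; there is no a priori finiteness to contradict. What the hyperbolicity inequality actually gives, when applied to the telescoping chain of nested disks, is a geometric decay: writing $s_i$ for the number of $H$-vertices on $\partial D_i$ and beyond (inward), the inequality forces $s_{i+1} < \frac{c_\GG}{c_\GG+1}\,s_i$, so the depth of any $H$-vertex inside a disk is $O(c_\GG\log |V(H)\cap\partial D_1|)$. Turning this logarithmic depth bound into an irrelevance criterion requires additional work --- in particular, controlling the boundary count via the surface structure and the short non-contractible cycles, which is precisely the ``cut-and-paste'' difficulty you flag at the end. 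So the ingredient you identify as the main obstacle is real, but the specific accumulation argument you propose in its place does not close the gap. Your other concerns (termination in linearly many rounds, bounding treewidth after deletion) are also genuine issues that the cited proofs handle with care.
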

For example, since $4$-critical graphs of girth at least five are strongly hyperbolic~\cite{trfree3},
it is possible to test in linear time whether a graph of girth at least five drawn in a fixed surface is $3$-colorable,
or more generally, whether a precoloring of a bounded number of vertices in such a graph extends to a $3$-coloring.
\begin{theorem}[Postle and Thomas~\cite{PosThoHyperb}]\label{thm-potsmall}
If $\GG$ is a strongly hyperbolic class, then each graph in $\GG$ drawn in a surface $\Sigma$ of Euler genus $g$ with boundary $\partial\Sigma$
has $O(g+|\partial \Sigma\cap V(G)|)$ vertices.
\end{theorem}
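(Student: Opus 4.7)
The plan is to induct on the complexity of $\Sigma$, measured by its Euler genus plus the number of boundary components, reducing to the cases of disks and cylinders where strong hyperbolicity applies directly to bound the vertex count by the perimeter.

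For the base cases, if $\Sigma$ is a closed cylinder with $k$ total boundary vertices of $G$, then its open interior $\Lambda = \Sigma \setminus \partial\Sigma$ is an open cylinder whose boundary meets $G$ only in the $k$ boundary vertices, so strong hyperbolicity directly gives $|V(G) \setminus \partial\Sigma| \leq c_{\GG}(k-1)$, hence $|V(G)| = O(k)$. If $\Sigma$ is a closed disk with $k$ boundary vertices, one may puncture an interior face to view $\Sigma \setminus \partial\Sigma$ (minus a point) as a degenerate open cylinder and apply the same bound, obtaining $|V(G)| = O(k)$.

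For the inductive step, suppose $\Sigma$ has Euler genus $g>0$ or has more than two boundary components. We seek a non-contractible simple closed curve $\gamma$ in $\Sigma$ that passes only through vertices and faces of the embedding of $G$, and cut $\Sigma$ along $\gamma$. The resulting surface $\Sigma'$ has strictly smaller complexity (either a smaller Euler genus if $\gamma$ is non-separating, or fewer handles/crosscaps/boundary components), and its boundary vertex count is at most $k + 2|\gamma \cap V(G)|$. Applying the inductive hypothesis to $(G,\Sigma')$ yields a bound of $O\bigl(g' + k + 2|\gamma \cap V(G)|\bigr)$, which we wish to simplify to $O(g+k)$.

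The central obstacle is controlling the total length of the cuts across the induction so that the final bound is linear in $g+k$. A naive application of Theorem~1 gives cuts of length $O(\log g)$ and a telescoping bound of only $O(g \log g + k)$, which is too weak. The right approach is an amortized one: rather than cutting one short cycle at a time, fix in advance a maximal family $\mathcal{F}$ of pairwise disjoint, pairwise non-homotopic, non-contractible simple closed curves in $G$. Such a family has cardinality $O(g)$, and cutting along all of $\mathcal{F}$ at once decomposes $\Sigma$ into disks and cylinders. To bound the total length $\sum_{\gamma \in \mathcal{F}} |\gamma \cap V(G)|$, apply strong hyperbolicity repeatedly to the cylindrical regions lying between consecutive (homotopy classes of) curves in $\mathcal{F}$: each such region $\Lambda$ contributes $|V(G) \cap \Lambda| \leq c_{\GG}(|\partial\Lambda \cap G|-1)$, and summing these inequalities telescopes the perimeter contributions, yielding a total vertex bound of $O(g+k)$. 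Making this amortization precise—in particular, ensuring that the family $\mathcal{F}$ can be chosen so that its total length is absorbed by the perimeters of the resulting cylinders rather than multiplying the genus—is the technical heart of the argument, and is where the strength of hyperbolicity on cylinders (as opposed to merely on disks) is essential.
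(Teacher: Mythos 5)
This theorem is attributed to Postle and Thomas and is used by the paper as a black box; the paper contains no proof of it, so there is nothing to compare your attempt against directly. Evaluating it on its own terms, the base cases and the reduction to a cutting argument are sensible, but the inductive/amortization step has two concrete problems that leave the proof incomplete.

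First, cutting a surface along a \emph{maximal} family of pairwise disjoint, pairwise non-homotopic, non-contractible simple closed curves does not in general decompose it into disks and cylinders: it produces pairs of pants (spheres with three holes). For an orientable surface of genus two, for example, a maximal such family has three curves and the complement is two pairs of pants. Strong hyperbolicity, as defined here, says nothing about subsets homeomorphic to a pair of pants. Moreover, your curves are required to be cycles of $G$, so the family that is maximal \emph{among cycles of $G$} may fail to be maximal in $\Sigma$, making the complementary pieces even more complicated. You would need to justify why the pieces that arise can be controlled, and the definition of strong hyperbolicity gives you no tool to do so once a piece has three or more boundary components.

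Second, even granting a decomposition into disks and cylinders, the ``telescoping'' you describe does not occur. Each cut curve $\gamma$ lies on the boundary of two pieces, so summing the strong-hyperbolicity inequalities over all pieces yields roughly $|V(G)\setminus(\partial\Sigma\cup\bigcup\mathcal F)|\le c_{\GG}\bigl(2L+k\bigr)$ where $L=\sum_{\gamma\in\mathcal F}|\gamma\cap V(G)|$ and $k=|\partial\Sigma\cap V(G)|$. This bounds the interior in terms of the total cut length $L$, but it supplies no upper bound on $L$ itself --- the inequalities all point the same direction, so there is nothing to cancel. An individual cut has length $O(\log g)$ by the edge-width theorem, which gives $L=O(g\log g)$ and hence a bound of $O(g\log g + k)$, exactly the weaker estimate you set out to avoid. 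The heart of the Postle--Thomas result is precisely a mechanism for choosing the cut system so that $L=O(g+k)$, and your sketch identifies this as the crux without supplying it. As written, the step ``summing these inequalities telescopes the perimeter contributions'' is an assertion, not an argument, and I do not see how to make it work with the tools you have invoked.
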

For example, since $4$-critical graphs of girth at least five are strongly hyperbolic~\cite{trfree3},
for every integer $g\ge 0$, there exists a constant $s_g=O(g)$ such that every non-$3$-colorable graph of girth at least five and Euler genus
$g$ contains a non-$3$-colorable subgraph with at most $s_g$ vertices.

With these results in mind, let us return to the discussion of other variants of graph coloring in the context of
triangle-free embedded graphs, especially the \emph{list coloring} and the \emph{fractional coloring}.
Let $L$ be an assignment of lists of colors to vertices of a graph $G$.  An \emph{$L$-coloring} of $G$ is
a proper coloring of $G$ such that the color of each vertex $v$ belongs to the list $L(v)$.  A graph $G$ is \emph{list $k$-colorable} if
it is $L$-colorable for every assignment $L$ of lists of size at least $k$.  Clearly, a list $k$-colorable
graph is also $k$-colorable.  The direct counterpart of Grötzsch' theorem~\cite{grotzsch1959} is false:
Voigt~\cite{voigt1995} found a triangle-free planar graph which is not list $3$-colorable.
On the other hand, Thomassen~\cite{thomassen1995-34} proved that planar graphs of girth at least five are list $3$-colorable.
Furthermore, Dvořák and Kawarabayashi~\cite{dk} proved that minimal obstructions of girth at least $5$
to list $3$-colorability are hyperbolic, and finally Postle~\cite{postle3crit} proved they are strongly hyperbolic.
Hence, all the aforementioned results for $3$-coloring of embedded graphs of girth at least five also
apply in the list coloring setting.

Let us now turn our attention to fractional coloring.
A function that assigns sets to all vertices of a graph is a \emph{set coloring} if the sets
assigned to adjacent vertices are disjoint.
For positive integers $a$ and $b$, a {\em $(b:a)$-coloring} of a graph $G$ is a set coloring
which to each vertex assigns an $a$-element subset of $\{1,\ldots, b\}$.
The concept of $(b:a)$-coloring is a generalization of the conventional vertex coloring. In fact,
a $(b:1)$-coloring is exactly an ordinary proper $b$-coloring.
The {\em fractional chromatic number} of $G$, denoted by $\chi_f(G)$, is the infimum of the
fractions $b/a$ such that $G$ admits a $(b:a)$-coloring. Note that $\chi_f(G)\leq \chi(G)$ for any graph $G$,
where $\chi(G)$ is the chromatic number of $G$.

Grötzsch' theorem can be improved only very mildly in the fractional coloring setting.
Jones~\cite{Jon84} found for each integer $n$ such that $n\equiv 2\pmod 3$
an $n$-vertex triangle-free planar graph with fractional chromatic number exactly
$3-\frac{3}{n+1}$.  On the other hand, Dvořák, Sereni and Volec~\cite{frpltr} proved that every triangle-free planar graph with $n$ vertices is
$(9n:3n+1)$-colorable, and thus its fractional chromatic number is at most $3-\frac{3}{3n+1}$.
The examples given by Jones~\cite{Jon84} have many $4$-cycles, leading Dvořák and Mnich~\cite{dmnich} to conjecture the following.
\begin{conjecture}\label{conj-g5}
There exists a constant $c<3$ such that every planar graph of girth at least five has fractional chromatic number
at most $c$.
\end{conjecture}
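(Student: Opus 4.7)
The conjecture is equivalent, after clearing denominators, to the assertion that there exist positive integers $a$ and $b$ with $b<3a$ such that every planar graph of girth at least five admits a $(b:a)$-coloring.  The most economical target is $b=3a-1$, so I would fix the smallest value of $a$ for which the argument goes through and aim to prove that every planar graph of girth at least five is $(3a-1:a)$-colorable.  The strong hyperbolicity result of the present paper is highly suggestive of such a strengthening: it shows that passing from $3$-coloring to $(3a:a)$-coloring preserves the structural tameness of minimal obstructions in the girth-five world, and it is reasonable to hope for the analogous statement one step tighter.

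\textbf{Step 1 (criticality and discharging setup).} Assume for contradiction that there is a planar, girth-at-least-five graph $G$ with no $(3a-1:a)$-coloring, chosen minimal under the subgraph order.  Then every proper subgraph of $G$ is $(3a-1:a)$-colorable, and in particular $G$ has minimum degree at least $2$.  Euler's formula together with the girth condition gives $|E(G)|\le \tfrac{5}{3}(|V(G)|-2)$, so the average degree of $G$ is strictly less than $\tfrac{10}{3}$.  This opens the door to standard discharging for girth-five planar graphs: set the initial charge of each vertex $v$ to $\deg(v)-\tfrac{10}{3}$ and of each face $f$ to $\ell(f)-\tfrac{10}{3}$, design discharging rules, and extract a short list of unavoidable local configurations.

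\textbf{Step 2 (reducible configurations).} The heart of the proof would be to show that each configuration on the unavoidable list is \emph{reducible} for $(3a-1:a)$-colorability: given a $(3a-1:a)$-coloring of $G$ with the configuration deleted, one can extend it to the deleted vertices, possibly after uncoloring a bounded part of the boundary.  The natural candidates are short chains of degree-$2$ vertices, triples of degree-$3$ vertices on a $5$-face, and more intricate clusters around pairs of adjacent low-degree vertices, all familiar from Thomassen's proof of list $3$-colorability of planar girth-five graphs.  For each such configuration one would analyse, vertex by vertex, which $a$-subsets of $\{1,\dots,3a-1\}$ remain disjoint from the subsets already chosen on the boundary, and verify existence of a compatible extension by an explicit Hall-type matching argument on the bipartite graph of free $a$-subsets versus vertices to be colored.

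\textbf{Main obstacle.} The extension step is the principal difficulty.  Once the universe of colors shrinks from $3a$ to $3a-1$, a degree-$3$ vertex all three of whose neighbors are precolored can be almost completely blocked: although three pairwise disjoint $a$-subsets of $\{1,\dots,3a-1\}$ do not exist, the union of the three neighbors' subsets can still have size as large as $3a-1$, leaving fewer than $a$ colors available.  Any viable proof must therefore argue globally to prevent this worst case, for instance by choosing the coloring of $G$ minus the configuration so that some color is forced to repeat among the neighbors of each deleted vertex, or by controlled uncolor/recolor steps governed by a potential function.  It is precisely this phenomenon that places the conjecture strictly beyond the $(3a:a)$-list-colorability arguments of the present paper, and I expect a successful attack to couple the strong-hyperbolicity skeleton developed here with a substantially more delicate local analysis, possibly aided by a probabilistic or algebraic averaging step.
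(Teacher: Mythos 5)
This statement is labeled \textbf{Conjecture} in the paper, and the paper explicitly declines to prove it: the authors remark that ``the conjecture is open in general'' and that in a followup paper they prove only the special case of bounded maximum degree, using the strong hyperbolicity established here as a key lemma. There is therefore no proof in the paper to compare yours against, and you have not supplied one either: your final paragraph (``Main obstacle'') candidly identifies the point at which the discharging/reducibility plan breaks --- a degree-3 vertex with three precolored neighbors can be blocked when the palette shrinks to $3a-1$ colors --- and then stops without resolving it. An outline that explicitly flags an unclosed gap is not a proof, and should not be presented as one.

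Two further technical cautions. First, your opening reduction (``the conjecture is equivalent, after clearing denominators, to the existence of fixed $a,b$ with $b<3a$ such that every such graph is $(b:a)$-colorable'') is not a triviality: a uniform bound $\chi_f(G)\le c<3$ over a graph class does not automatically yield a \emph{single} fixed pair $(b,a)$, since the denominator achieving $\chi_f(G)$ can depend on $G$ (compare the $(9n:3n+1)$-coloring result of Dvořák, Sereni and Volec cited in the paper, where the parameters grow with $|V(G)|$). If you want to work with the $(b:a)$ formulation you should either justify this equivalence or weaken your target accordingly. Second, the paper's own reducibility remarks cut against your Step 2: the authors note that even the simplest girth-five configuration (a $5$-cycle of degree-3 vertices) fails to be reducible already for $(6:2)$-coloring, which is why they were forced to route through Postle's list-coloring machinery rather than the Dvořák--Král'--Thomas discharging argument. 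Your plan to recycle Thomassen-style unavoidable configurations for $(3a-1:a)$-colorability would need a much more careful case analysis than the one sketched, and the concrete obstruction you name is exactly the one the paper warns about.

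If you wish to say something provable in the spirit of this conjecture, the honest statement of what is known is: the paper proves strong hyperbolicity of $(a,L,S)$-critical girth-five graphs with lists of size $3a$ (Theorem 4), and defers the bounded-degree case of the conjecture to a followup. The full conjecture remains open, and your proposal should be framed as a research plan with an identified gap, not as a proof.
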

While the conjecture is open in general, in a followup paper we will prove that it holds for graphs with bounded maximum degree
(i.e., for any $\Delta$, there exists $c_\Delta<3$ such that planar graphs of girth at least five and maximum degree at most $\Delta$
have fractional chromatic number at most $c_\Delta$).  As a key step in that argument, we need to show that the class of minimal
obstructions of girth at least five to $(6:2)$-colorability is strongly hyperbolic.

Since every $3$-colorable graph is also $(6:2)$-colorable, it might seem obvious that this can be done by some modification
of the argument of Dvořák, Král' and Thomas~\cite{trfree3} for $3$-colorings.  Somewhat surprisingly, this turns out not to be the
case (the argument is based on reducible configurations, and even the simplest one---a $5$-cycle of vertices of degree three---does
not work for $(6:2)$-coloring).  Fortunately, the list-coloring argument of Postle~\cite{postle3crit} does the trick, subject to
extensive groundwork and some minor modifications.  As an added benefit, we obtain the result for the list variant of fractional
coloring.  For an assignment $L$ of lists to vertices of a graph $G$ and a positive integer $a$, a set coloring $\varphi$ of $G$
is an \emph{$(L:a)$-coloring} if $\varphi(v)$ is an $a$-element subset of $L(v)$ for every $v\in V(G)$.  Let $S$ be a proper subgraph
of $G$.  We say $G$ is \emph{$(a,L,S)$-critical} if for every proper subgraph $H$ of $G$ containing $S$, there exists
an $(L:a)$-coloring of $S$ which extends to an $(L:a)$-coloring of $H$, but not to an $(L:a)$-coloring of $G$.
In particular, denoting by $\varnothing$ the null subgraph (with no vertices and edges), $G$ is $(a,L,\varnothing)$-critical
if and only if $G$ is a minimal non-$(L:a)$-colorable graph.
Our main result is the following.
\begin{theorem}\label{thm-postrong}
Let $\GG$ be the class of graphs of girth at least five drawn in surfaces such that if $G\in \GG$ is drawn in a surface $\Sigma$
and $S$ is the subgraph of $G$ drawn in the boundary of $\Sigma$, then $G$ is $(a,L,S)$-critical for some
positive integer $a$ and an assignment $L$ of lists of size at least $3a$ to vertices of $G$.
Then $\GG$ is strongly hyperbolic.
\end{theorem}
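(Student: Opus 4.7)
The plan is to adapt Postle's proof~\cite{postle3crit} that minimal obstructions of girth at least five to list $3$-colorability are strongly hyperbolic, replacing single colors by $a$-subsets and lists of size $3$ by lists of size $3a$. The proof of strong hyperbolicity reduces to a statement about planar subgraphs: for any $(a,L,S)$-critical graph $G$ of girth at least five drawn in a closed disk or cylinder with $S$ equal to the subgraph drawn on the boundary, one must show $|V(G)| \leq C(|V(S)|-1)$ for some absolute constant $C$ (independent of $a$). Once this vertex bound is established, strong hyperbolicity follows by cutting $G$ along $\partial\Lambda$ and viewing the part inside $\Lambda$ as an $(a,L',S')$-critical graph in the cylinder, where $L'$ is obtained by restricting away the colors used by the vertices outside $\Lambda$.

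The first step would be to set up a potential function $\rho(G,L,S)$ of the type used by Postle and by Dvo\v{r}\'ak--Kawarabayashi~\cite{dk}, combining an Euler-type term $\alpha|V(G)|-\beta|E(G)|$ with boundary corrections. The constants $\alpha,\beta$ are tuned so that (i) the critical graph consisting of a single edge in $S$ has non-negative potential, and (ii) Euler's formula for girth-$5$ plane graphs (giving $|E|\leq \tfrac{5}{3}(|V|-2)$) forces $\rho$ to go negative when too many vertices accumulate relative to the boundary. The goal is to prove that every $(a,L,S)$-critical $G$ has $\rho(G,L,S)$ bounded by a linear function of the boundary, which is equivalent to the desired vertex bound.

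The second step is to catalogue reducible configurations and check that they are still reducible in the $(L:a)$-setting. A non-boundary vertex of degree $2$ is immediate: its two neighbors block at most $2a$ colors out of the $\geq 3a$ in its list, leaving room for an $a$-subset. The more delicate configurations used by Postle---short paths of degree-$3$ vertices and small separating structures that can be replaced by a gadget in the boundary---must be redone in the fractional-list setting. Each reduction requires showing that an $(L':a)$-coloring of the reduced graph $G'$ extends to an $(L:a)$-coloring of $G$; these extension lemmas become set-selection (Hall-type) problems on the $a$-subsets of the remaining lists rather than case analyses on individual colors, and the inequalities defining each list-size reduction must be compatible with arithmetic that is linear in $a$.

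The third step is the discharging argument itself: assign initial charge to vertices and faces so that the total charge equals the potential (up to an additive constant depending only on $|V(S)|$), redistribute charge by local rules, and show that the absence of all the reducible configurations forces the final charge at every vertex and face to be non-negative. This yields the linear vertex bound and hence strong hyperbolicity via Theorem~\ref{thm-potsmall}-style reasoning applied to the cylinder. The main obstacle will be the second step: as the authors flag, the simplest non-trivial configuration for $3$-coloring (a $5$-cycle of cubic vertices) fails to reduce in the $(6{:}2)$-setting, so one cannot just port Postle's list of configurations verbatim. One must instead sharpen the analysis---likely by working with configurations whose reducibility argument does not depend on ``freezing'' a single color, but rather on showing that a certain system of $a$-subset choices always admits a feasible solution via defect or matroidal set-packing arguments. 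Verifying this for each configuration, and ensuring the resulting list of reducible configurations is rich enough to support the discharging, will be the principal technical labour.
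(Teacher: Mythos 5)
Your high-level plan matches the paper's: reduce strong hyperbolicity to a linear vertex bound for $(a,L,S)$-critical plane graphs with the precolored subgraph on one or two faces (the paper's Theorem~\ref{thm-hypcyl}), run a potential/deficiency function tuned to Euler's formula for girth~$5$, and import Postle's criticality framework from \cite{postle3crit}. The paper indeed uses $\defi(T)=3e(T)-5v(T)$ with correction terms, and the final bound comes from showing $d(T)\ge 3$ for non-singular critical canvases (Theorem~\ref{thm-canvas}).

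The gap is in your step 2. You describe the reducibility of each local configuration as a ``set-selection (Hall-type) problem on the $a$-subsets of the remaining lists'' or via ``defect or matroidal set-packing arguments.'' That is not how the paper (or Postle) establishes reducibility, and it is unlikely to work as stated: the obstruction you yourself flag (a $5$-cycle of cubic vertices fails for $(6{:}2)$) is precisely the kind of purely local configuration a Hall/matroid check would be applied to, and it is \emph{not} reducible. What actually makes the Postle framework go through is that the reducibility of the configurations used there (neighboring and semi-neighboring paths, neighboring claws, etc.) rests on two \emph{global} Thomassen-type extension theorems, not on local set packing. Concretely, one needs an $(L:a)$ analogue of Thomassen's precolored-path theorem for graphs with at most two flaws near the outer face (the paper's Theorem~\ref{thm-2flaws}), and from it an $(L:a)$ analogue of the two-special-faces theorem (the paper's Theorem~\ref{thm-cyl}); these feed into the analogues of Postle's Lemmas~2.9, 2.11, 2.13, 2.14 and are what certify that the configurations used in the potential argument can actually be colored around. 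Proving these extension theorems---a long induction with many cases about chords, short cycles, and carefully defined list reductions---is where the bulk of the paper's work lies, and your proposal offers no concrete plan for it. Without them, the discharging/potential step has no reducible configurations to stand on, and the plan stalls at exactly the point you correctly identified as the principal obstacle.

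One smaller point: the paper additionally needs to verify that Postle's claim that two adjacent vertices near the boundary have \emph{nested} available lists can be weakened to the quantitative condition $|A(v)\setminus A(u)|<a$ (the modification to \cite[Claim~4.11]{postle3crit}); this is a small but genuine arithmetic adjustment that a straight ``replace colors by $a$-subsets'' translation would miss.
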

For example, together with Theorem~\ref{thm-potsmall} this implies that for every integer $g\ge 0$, there exists a constant $s_g=O(g)$
such that if $G$ is a graph of girth at least five drawn in a surface of Euler genus $g$, and $G$ is not $(L:a)$-colorable
for some positive integer $a$ and an assignment $L$ of lists of size at least $3a$ to vertices of $G$, then
$G$ contains a subgraph with at most $s_g$ vertices which also is not $(L:a)$-colorable.
The main difficulty in proving Theorem~\ref{thm-postrong} is the need to establish the following result.
For a positive integer $a$ and an assignment $L$ of lists of size $2a$ or $3a$ to vertices of a graph $G$,
a \emph{flaw} is an edge joining two vertices with lists of size $2a$.

\begin{theorem}\label{thm-cyl}
Let $a$ be a positive integer.  Let $G$ be a plane graph of girth at least $5$ and let $f_1$ and $f_2$ be faces of $G$.  Let $L$ be a list assignment for $G$ such that
$|L(v)|\in\{2a,3a\}$ for all $v\in V(G)$ and all vertices with list of size $2a$ are incident with $f_1$ or $f_2$.
If each flaw is at distance at least three from any other vertex with list of size $2a$ and at least four from any other flaw, then $G$ is $(L:a)$-colorable.
\end{theorem}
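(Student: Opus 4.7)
The plan is to argue by contradiction: take a counterexample $(G,L)$ minimizing $|V(G)|+|E(G)|$ and derive structural properties strong enough to make a global discharging count impossible. First I would record the easy reductions. Any $3a$-list vertex of degree at most two is removable: after the rest is colored, at most $2a$ colors are forbidden, so an $a$-subset can still be chosen. A $2a$-list vertex of degree at most one is removable for the same reason. Combined with a standard cut-vertex argument (a cut vertex would split $G$ into two smaller instances whose separately obtained colorings can be permuted to agree), this forces $G$ to be $2$-connected, every face other than $f_1$ and $f_2$ to be bounded by an induced cycle of length at least $5$, every $2a$-vertex to have degree at least $2$, and every $3a$-vertex to have degree at least $3$.

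The next step addresses how $f_1$ and $f_2$ interact. If there is a short path in $G$ connecting a vertex of $\partial f_1$ to a vertex of $\partial f_2$, one can cut along it to produce a plane graph drawn in a disk, whose single boundary face has walk containing copies of both $\partial f_1$ and $\partial f_2$. Proving that any such disk instance is $(L:a)$-colorable (under the inherited flaw-distance hypotheses) I would isolate as a separate lemma, provable by Thomassen-style chain induction along the boundary: the key slack is that a $2a$-list vertex with two neighbors already committed still retains an $a$-subset. Granting this disk lemma, the minimum counterexample can have no short path between $f_1$ and $f_2$, so the flaw-containing neighborhoods on $\partial f_1$ and on $\partial f_2$ are well separated, and each flaw lies in a ``large'' $3a$-list neighborhood on its side.

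To close the argument I would use discharging. Assign each vertex $v$ the initial charge $\deg(v)-4$ and each face $f$ the charge $|f|-4$; by Euler's formula and the girth-$5$ condition, the total charge is a bounded negative quantity coming essentially from $f_1$ and $f_2$, while the degree bounds established above give nonnegative charge to every interior vertex and face. Discharging rules would then pass charge from long internal faces and $3a$-vertices of high degree toward the $2a$-vertices and the short boundary faces. The main obstacle, and where almost all of the technical work goes, is the flaw analysis: a flaw represents a local deficit of size $2$ rather than the deficit $1$ of an isolated $2a$-vertex, so its neighborhood must absorb twice as much charge. The distance hypotheses---at least $3$ from any other $2a$-vertex and at least $4$ from any other flaw---are exactly what is needed to guarantee that the absorbing neighborhoods of distinct flaws are disjoint and contain enough $3a$-vertices of degree at least $3$ to cover the extra cost; the hard part of the proof will be choosing the transfer rules so that the neighborhood of every flaw finishes with nonnegative charge without double-counting with neighboring $2a$-vertices that do not themselves lie on a flaw.
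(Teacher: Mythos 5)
Your proposal diverges from the paper's proof in a fundamental way and the divergence is where the gap lies. The paper does \emph{not} use discharging to finish; it establishes an auxiliary result (Theorem~\ref{thm-2flaws}) about plane graphs with a precolored path of length at most two and at most two flaws, then derives Corollary~\ref{cor-distflaws} for one face, and finally attacks the two-face case by a long sequence of claims about short paths between $C_1$ and $C_2$, ending with explicit vertex deletions on $C_2$ and an appeal to minimality. No charges are ever assigned.

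The discharging phase you propose is the genuine gap. Discharging arguments of the form you describe work by showing that a minimal counterexample must contain one of a small list of \emph{reducible configurations}, and then separately proving each such configuration can be removed. You have only shown reducibility for degree-$\le2$ vertices with $3a$-lists and degree-$\le1$ vertices with $2a$-lists, plus a cut-vertex reduction; this is nowhere near enough. More importantly, the paper explicitly remarks (in the introduction) that the reducible-configuration approach of Dvořák, Král' and Thomas for $3$-coloring \emph{fails} here: even the simplest configuration, a $5$-cycle of degree-$3$ vertices, does not reduce for $(6:2)$-coloring. So the configurations you would need to feed the discharging argument are not available when $a>1$. Your plan is not just incomplete; it is headed down a road the authors report does not lead anywhere.

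The ``disk lemma'' you mention in passing (``provable by Thomassen-style chain induction along the boundary'') is actually where the paper spends most of its effort: that lemma is Theorem~\ref{thm-2flaws}, whose proof occupies Section~\ref{sec-oneface} and is delicate precisely because the naive chain argument accumulates too many flaws. It is not a throwaway step, and in the paper's architecture it is used repeatedly in the two-face argument (not just to dispose of the case where $f_1$ and $f_2$ are close). Also, your cut-vertex reduction needs care: fixing the cut vertex's $a$-set turns it into a precolored vertex, which is outside the hypotheses of the theorem you are proving, so the reduction itself requires a version of the statement with a precolored vertex/path --- again Theorem~\ref{thm-2flaws}, not the bare Theorem~\ref{thm-cyl}.
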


A weaker form of Theorem~\ref{thm-cyl} for list $3$-coloring was proven by Thomassen~\cite{thomassen-surf};
in his formulation, the lists must be subsets of $\{1,2,3\}$, and more assumptions are made on distances between flaws and other vertices
with lists of size $2$.  Thomassen's argument can essentially be modified to work for list $(3a:a)$-colorings as well;
however, the argument is quite long and complicated and as presented in~\cite{thomassen-surf}, leaves quite a lot of details to the reader
to work out.  Hence, rather than trying to verify all the details in the fractional list coloring setting, we developed a simpler proof\footnote{Let us remark
our writeup is a bit longer than Thomassen's (17 pages including the auxiliary results compared to 11 pages), but we go into substantially greater detail in its
presentation.} along the same lines, which takes the majority of this paper.  First, in Section~\ref{sec-oneface}, we prove an auxiliary claim (Theorem~\ref{thm-2flaws}) regarding
graphs with a precolored path and at most two flaws.  Using this result, we can relatively easily deal with the most technical parts
of the proof of Theorem~\ref{thm-cyl}---the case where $f_1$ and $f_2$ are close to each other.  Theorem~\ref{thm-cyl} is established
in Section~\ref{sec-cyl}.  Finally, in Section~\ref{sec-hyper}, we discuss the modifications to the argument of Postle~\cite{postle3crit}
needed to prove Theorem~\ref{thm-postrong}.

\section{Graphs with two flaws}\label{sec-oneface}

Let $a$ be a positive integer, let $G$ be a plane graph and let $P$ be a path contained in the boundary of its outer face;
we view the path $P$ as directed, i.e., one of its endvertices is designated to be the first vertex of $P$.
A list assignment $L$ for $G$ is \emph{$(a,P)$-valid} if $|L(v)|=3a$ for every vertex $v\in V(G)$
not incident with the outer face of $G$, $|L(v)|\in\{2a,3a\}$ for every vertex $v\in V(G)\setminus V(P)$
incident with the outer face of $G$, and $P$ is $(L:a)$-colorable.
A \emph{flaw} (with respect to $P$) is an edge $uv$ of $G$ with $u,v\not\in V(P)$ such that $|L(u)|=|L(v)|=2a$.
The first vertex $p$ of $P$ is \emph{adjacent} to the flaw $uv$ if $pu\in E(G)$, and it is \emph{connected} to the flaw $uv$ if either $pu\in E(G)$, or $G$ contains
a path $pxyuv$ with $x,y\not\in V(P)$, $|L(x)|=2a$, and $|L(y)|=3a$.  If $p$ is connected to a flaw in a unique way and $P$ has length two, then
let $c(G,P,L)$ be the set defined as follows. If $pu\in E(G)$, then let $c(G,P,L)=L(u)$.
Otherwise, let $c'$ be an $a$-element subset of $L(y)\setminus L(u)$ and let $c(G,P,L)$ be an $a$-element
subset of $L(x)\setminus c'$.  If $P$ has length at most one or $p$ is not connected to a flaw, then let $c(G,P,L)=\emptyset$.
For a set $c$ of colors and a vertex $p$, an $(L:a)$-coloring is \emph{$(p,c)$-disjoint} if the color set of $p$ is disjoint from $c$.
For a cycle $C$ in $G$, let $\cin(C)$ denote the subgraph of $G$ drawn in the closed disk bounded by $C$, and let $\cex(C)$
denote the subgraph of $G$ drawn in the complement of the open disk bounded by $C$.

\begin{theorem}\label{thm-2flaws}
Let $a$ be a positive integer, let $G$ be a plane graph of girth at least $5$ and let $P=p_0\ldots p_\ell$ be a path of length at most two contained in the boundary of its outer face.
Let $L$ be an $(a,P)$-valid list assignment for $G$ with at most two flaws, such that distance between flaws is at least three.
Assume furthermore that if $\ell=2$, then either $p_0$ is not connected to a flaw or $p_0$ is connected to a flaw in a unique way.
Then every $(p_0,c(G,P,L))$-disjoint $(L:a)$-coloring of $P$ extends to an $(L:a)$-coloring of $G$.
\end{theorem}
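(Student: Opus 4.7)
The plan is to induct on $|V(G)|$ via Thomassen-style reducible configurations, following the same blueprint that Thomassen used for list $3$-coloring triangle-free planar graphs. After disposing of trivial base cases, I would reduce to the case that $G$ is $2$-connected with outer face bounded by a cycle $C$ containing $V(P)$. A cut vertex or a disconnection lets us handle each block separately against an appropriate sub-path of $P$; blocks not containing all of $V(P)$ inherit a precolored path of length at most one, in which case $c(G,P,L)=\emptyset$ and the bookkeeping simplifies considerably.

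Next, I would dispose of chords of $C$ and short separating cycles. If $uv$ is a chord of $C$, it splits $G$ into two subgraphs sharing the edge $uv$; one side contains a portion of $P$, and I would color $u,v$ consistently with the precoloring and with the $c$-set, then recurse on each side, taking the $C$-arc containing $u,v$ (together with any part of $P$ on that side) as the new precolored path. Because $G$ has girth at least $5$, a separating cycle $C'$ has length $\ge 5$; for a separating $5$-cycle $C'$ disjoint from $P$ I would first color $\cex(C')$ by induction so that $C'$ becomes $(L:a)$-colored, and then extend inside $\cin(C')$ via another inductive call whose precolored path is (a subpath of) $C'$. In each split, the distance-$\ge 3$ condition between flaws implies that the two flaws either lie on the same side of the separator or become completely independent, so the inductive hypothesis on flaw distances is preserved.

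Assuming there is no chord and no short separating cycle, the core step is to locate a reducible configuration on or near $P$, as in Thomassen's arguments for girth-$5$ planar graphs: a vertex $v\in V(C)\setminus V(P)$ of degree two or three adjacent to $p_0$ or $p_\ell$, or a short radial path from $P$ into the interior. I would color such a $v$ by choosing an $a$-subset of $L(v)$ that avoids the already-assigned colors of its precolored neighbors, append $v$ to $P$, and recurse on $G$ with the extended precolored path (or on $G-v$ if $v$ is a leaf after the previous reductions). When $v$ is an endpoint of a flaw, this reduction effectively consumes the flaw and hands the residual subproblem at most one remaining flaw, which is precisely the easier shape that the definition of $c(G,P,L)$ was designed to track.

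The hard part, and the main source of casework, is the bookkeeping around $c(G,P,L)$ and the ``uniquely connected'' hypothesis when $\ell=2$. After a reduction, the new first vertex $p_0'$ of the precolored path may become connected to a flaw through a shifted or newly revealed route, and I must verify that the distance-$\ge 3$ separation between flaws together with the girth-$5$ hypothesis forces this connection (if it exists) to be unique, and that the $c$-set I choose at the subproblem matches the colors already forced by the current coloring of $P$. Establishing that the two flaws never both become connected to the same path endpoint after reduction—and picking the $c$-set coherently across the split—is the delicate ingredient that makes the entire induction go through.
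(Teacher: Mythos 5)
Your high-level plan matches the shape of the paper's argument: a minimal-counterexample/Thomassen-style induction, first reducing to a $2$-connected graph with an induced outer cycle, then eliminating chords and short separating cycles, then locating a small reducible configuration near $P$. But there are two concrete gaps.

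First, your treatment of separating cycles does not go through as stated. For a separating $5$-cycle $C'$ (or indeed any separating cycle of length up to~$9$ that the girth assumption allows), after coloring $\cex(C')$ you want to extend that coloring to $\cin(C')$ by treating $C'$ (or $C'$ minus an edge, a path of length~$4$) as a precolored path. But the theorem you are inducting on only covers precolored paths of length at most two. This is exactly where the paper introduces and proves the auxiliary \refclaim{cl-cpath} (extension from an induced precolored path of length~$4$ to~$8$, under hypotheses about distances and neighborhoods), and the whole ``tame cycle'' machinery (\refclaim{cl-cyc}) rests on it. Without proving such a strengthened extension lemma--whose proof itself is nontrivial, using the minimality of the counterexample and a further internal induction--the separating-cycle step in your plan is a bare assertion.

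Second, the ``core step'' you describe (color a low-degree vertex on $C$ near $p_0$, append it to $P$, and recurse) is not available when $\ell=2$: appending gives a precolored path of length~$3$, which the theorem does not allow. The paper gets around this by minimizing over $|V(G)|+|E(G)|$ and, subject to that, \emph{maximizing} $\ell$, so that the append-to-$P$ move is only used to push $\ell$ up to $2$; once $\ell=2$ it switches to a different reduction, deleting a carefully chosen small set $X$ near the first off-$P$ vertex with a $3a$-list, shrinking lists of the neighbors of $X$, and then verifying (via \refclaim{cl-2chor}, \refclaim{cl-3chor}, \refclaim{cl-badchords}, \refclaim{cl-flawchords}) that no new flaws appear, that the at-most-two-flaws and distance-$\ge 3$ hypotheses persist, and that the $c$-set condition is preserved. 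You acknowledge this bookkeeping is ``the delicate ingredient,'' but you do not give even the structure of the case analysis, and the two ``easy'' steps you do sketch (separating cycles; appending to $P$) are precisely the ones that do not work in the form you propose. As written this is an outline that identifies the right strategy but leaves the two load-bearing lemmas unstated and unproved.
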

\begin{proof}
Suppose for a contradiction that $G$ is a counterexample with $|V(G)|+|E(G)|$ minimum, and subject to that with $\ell$ maximum.
Let $c=c(G,P,L)$ and let $\psi_c$ denote a $(p_0,c)$-disjoint $(L:a)$-coloring of $P$ which does not extend to an $(L:a)$-coloring of $G$
(such a coloring exists since $G$ is a counterexample).

Note that $|L(v)|\le\deg(v)a$ for every $v\in V(G)\setminus V(P)$, as otherwise $|L(v)|\ge (\deg(v)+1)a$ by the assumption that $L$ is $(a,P)$-valid,
$\psi_c$ extends to an $(L:a)$-coloring of $G-v$ by the minimality of $G$, and we can greedily color $v$, obtaining an $(L:a)$-coloring
of $G$ which extends $\psi_c$, contradicting the choice of $\psi_c$.
In particular, all vertices not incident with the outer face have degree at least three.  Furthermore, $G$ is clearly connected.

Suppose that $G$ is not $2$-connected; then $G=G_1\cup G_2$ for proper induced subgraphs $G_1$ and $G_2$ intersecting in a single vertex $v$.
Without loss of generality, $P\not\subseteq G_2$.  Let $P_1=P\cap G_1$.  If $P_1\subseteq G_1$, then let $P_2=v$, otherwise let $P_2=P\cap G_2$.
By the minimality of $G$, the restriction of $\psi_c$ to $P_1$
extends to an $(L:a)$-coloring $\varphi_1$ of $G_1$.  Since $P_2$ has length at most one, the restriction of $\psi_c\cup \varphi_1$ to $P_2$ 
extends to an $(L_2:a)$-coloring $\varphi_2$ of $G_2$ by the minimality of $G$.
Then $\varphi_1\cup \varphi_2$ is an $(L:a)$-coloring of $G$ extending $\psi_c$, which is a contradiction.
Hence, $G$ is $2$-connected.

Let $K$ be the cycle bounding the outer face of $G$.  Analogously to the previous paragraph, we see that if $K$ has a chord, then $P$ has length two
and the chord is incident with $p_1$.  Suppose $p_1v$ is such a chord, and let $G=G_1\cup G_2$ for proper induced subgraphs $G_1$ and $G_2$ intersecting in $p_1v$,
where $p_0\in V(G_1)$ and $p_2\in V(G_2)$.
By the minimality of $G$, the restriction of $\psi_c$ to $p_1p_2$ extends to an $(L:a)$-coloring $\varphi_2$ of $G_2$.
Note that $c(G_1,p_0p_1v,L)\subseteq c(G,P,L)$, and thus 
the restriction of $\psi_c\cup \varphi_2$ to $p_0p_1v$ extends to an $(L:a)$-coloring $\varphi_1$ of $G_1$ by the minimality of $G$.
Then $\varphi_1\cup \varphi_2$ is an $(L:a)$-coloring of $G$ extending $\psi_c$, which is a contradiction.
Consequently, $K$ is an induced cycle.

\claim{cl-cpath}{Let $H$ be a proper subgraph of $G$ and let $Q=q_0\ldots q_k$ be an induced path in $H$ contained in the boundary
of the outer face of $H$, where $4\le k\le 8$.  Suppose that $P\cap H\subseteq Q$, every vertex $v\in V(H)\setminus V(Q)$ satisfies $|L(v)|=3a$,
and the distance between $q_0$ and $q_k$ in $H$ is at least $4$.  If no vertex of $H$ has more than two neighbors in $Q$,
then every $(L:a)$-coloring $\psi$ of $Q$ extends to an $(L:a)$-coloring of $H$.}
\begin{subproof}
Suppose first $k\le 6$.  Let $L'$ be a list assignment for $H-\{q_0,q_k\}$ such that
\begin{itemize}
\item $L'(q_1)=\psi(q_1)\cup \psi(q_2)$, $L'(q_{k-1})=\psi(q_{k-1})\cup \psi(q_{k-2})$,
\item for each vertex $v\not\in\{q_1,q_{k-1}\}$ with a neighbor $q\in\{q_0,q_k\}$, $L'(v)$ is a $2a$-element subset of $L(v)\setminus\psi(q)$,
and
\item $L'(z)=L(z)$ for any other vertex $z$ of $H-\{q_0,q_k\}$.
\end{itemize}
Note that the neighbors of $q_0$ and $q_k$
form an independent set, since $G$ has girth at least $5$ and the distance between $q_0$ and $q_k$
is at least $4$; and in particular $H-\{q_0,q_k\}$ with list assignment $L'$ has no flaws (with respect to the path $q_2\ldots q_{k-2}$).
By the minimality of $G$, we conclude that the restriction of $\psi$ to $q_2\ldots q_{k-2}$
extends to an $(L':a)$-coloring of $H-\{q_0,q_k\}$, which gives an $(L:a)$-coloring of $H$ extending $\psi$.

For $k\ge 7$, we prove the claim by induction on the number of vertices of $H$.
Suppose $H$ contains an induced cycle $C$ of length at most $8$ with $\cin(C)\neq C$.
By the induction hypothesis, $\psi$ extends to an $(L:a)$-coloring $\varphi_1$ of $\cex(C)$.
Let $uv$ be an edge of $E(C)\setminus E(P)$. Since $G$ has girth at least $5$, the distance between $u$ and $v$
in $\cin(C)-uv$ is at least $4$.  Since $G$ has girth at least $5$, no vertex has more than two neighbors in $C$.
By the induction hypothesis, the restriction of $\varphi_1$ to the path $C-uv$ extends to an $(L:a)$-coloring of $\cin(C)-uv$,
giving an $(L:a)$-coloring of $H$ extending $\psi$.
Hence, we can assume that $\cin(C)=C$ for every induced cycle of length at most $8$ in $H$.

Suppose some vertex $v\not\in V(Q)$ has two neighbors $q_i$ and $q_j$ in $Q$, with $i<j$.
Note that $j-i\le k-2$, since the distance between $q_0$ and $q_k$ is at least $4$.
Then $H=H_1\cup H_2$, where $H_1$ and $H_2$
are proper induced subgraphs of $H$ intersecting in $q_ivq_j$ and the outer face of $H_1$ is bounded by the cycle $C_1=q_i\ldots q_jv$.
Since $v$ has at most two neighbors in $Q$, the cycle $C_1$ is induced, and since its length is $j-i+2\le k\le 8$, we have
$H_1=\cin(C_1)=C_1$.  Let $P_2=q_0\ldots q_ivq_j\ldots q_k$.  Note that no vertex of $H_2$ can be adjacent to $v$ and two other vertices
of $P_2$, since the distance between $q_0$ and $q_k$ is at least $4\ge k-4$.  By the induction hypothesis, we can extend
the coloring of $P_2$ given by $\psi$ and by coloring $v$ by an $a$-element subset of $L(v)\setminus (\psi(q_i)\cup \psi(q_j))$
to an $(L:a)$-coloring of $H_2$.  This gives an $(L:a)$-coloring of $H$ extending $\psi$.  Hence, we can assume that no vertex
$v\not\in V(Q)$ has more than one neighbor in $Q$.

If $k=8$, then note that by planarity and the assumption that $G$ has girth at least $5$, $G$ cannot contain both a path of length
three between $q_0$ and $q_7$ and between $q_1$ and $q_8$.  By symmetry, we can assume $G$ does not contain such a path between
$q_0$ and $q_7$.  In case $k=7$, this is true as well by the assumption that the distance between $q_0$ and $q_k$ is at least $4$.
We consider the graph $H'=H-\{q_0,q_7,q_8\}$ with the list assignment $L'$ obtained as follows:
we set $L'(q_1)=\psi(q_1)\cup \psi(q_2)$, $L'(q_i)=\psi(q_i)\cup\psi(q_{i-1})$ for $i\in \{5,6\}$,
for each vertex $v\not\in V(Q)$ with a neighbor $x\in \{q_0,q_7,q_8\}$, we choose $L'(v)$ as a $2a$-element subset of $L(v)\setminus\psi(x)$,
and we let $L'(z)=L(z)$ for every other vertex $z$ of $H'$.
Since $G$ has girth at least $5$, since no vertex $v\in V(H)\setminus V(Q)$ has more than one neighbor in $Q$, and since
the distance between $q_0$ and $\{q_7,q_8\}$ is at least $4$, we conclude that $q_5q_6$ is the only flaw (with respect to $q_2q_3q_4$) in $H'$,
and that this flaw is not connected to $q_2$.  By the minimality of $G$, the restriction of $\psi$ to $q_2q_3q_4$
extends to an $(L':a)$-coloring of $H'$, which gives an extension of $\psi$ to an $(L:a)$-coloring of $H$.
\end{subproof}

A cycle $C$ in $G$ is \emph{tame} if $\cin(C)=C$, or $|C|\ge 8$ and $\cin(C)$ consists of $C$ and a chord of $C$, or $|C|\ge 9$ and
$\cin(C)$ consists of $C$ and a vertex with three neighbors in $C$.
\claim{cl-cyc}{All $(\le\!9)$-cycles in $G$ are tame.}
\begin{subproof}
Suppose that $G$ has a non-tame $(\le\!9)$-cycle, and choose such a cycle $C$ with $\cin(C)$ minimal.
Note that $C$ is an induced cycle in $\cin(C)$ and no vertex of $\cin(C)$ has three neighbors in $C$:
Otherwise, since $G$ has girth at least five and $C$ is not tame, there would exist a $(\le\!6)$-cycle $C'\neq C$ in $\cin(C)$
with $\cin(C')\neq C'$;  but then $C'$ would not be tame, contradicting the choice of $C$.

By the minimality of $G$, $\psi_c$ extends to an $(L:a)$-coloring $\varphi_1$ of $\cex(C)$.
Let $uv$ be any edge of $E(C)\setminus E(P)$.  Since $G$ has girth at least $5$, the distance between $u$ and $v$
in $\cin(C)-uv$ is at least $4$.  Hence, by \refclaim{cl-cpath}, the restriction of $\varphi_1$
to the path $C-uv$ extends to an $(L:a)$-coloring $\varphi_2$ of $\cin(C)-uv$.
However, then $\varphi_1\cup\varphi_2$ is an $(L:a)$-coloring of $G$ extending $\psi_c$,
which is a contradiction.  Consequently, $G$ cannot contain a non-tame $(\le\!9)$-cycle.
\end{subproof}

\claim{cl-35}{Suppose $C=q_1\ldots q_{11}$ is a cycle in $G$ disjoint from $V(P)$
and $q_1q_8,q_2q_6\in E(G)$, $|L(q_1)|=3a$, and $|L(q_i)|=\deg_G(q_i)a$ for $2\le i\le 11$.
Then $q_1$ has degree at least $5$ in $G$.}
\begin{subproof}
Suppose for a contradiction that $q_1$ has degree at most $4$.  By \refclaim{cl-cyc},
$H=C+\{q_1q_8,q_2q_6\}$ is an induced subgraph of $G$.  By the minimality of $G$,
$\psi_c$ extends to an $(L:a)$-coloring $\varphi$ of $G-V(H)$.  For $v\in V(H)$,
let $L'(v)$ be a subset of $L(v)$ disjoint from the color sets given to neighbors of $v$
in $V(G)\setminus V(H)$, such that $|L'(v)|=\deg_H(v)a$ for $v\in\{q_2,\ldots, q_{11}\}$
and $|L'(q_1)|=2a$.  Since $|L'(q_1)|=|L'(q_3)|=2a$ and $|L'(q_2)|=3a$, there exist
$a$-element sets $c_1\subset L'(q_1)$ and $c_3\subset L'(q_3)$ such that $|L'(q_2)\setminus (c_1\cup c_3)|\ge 2a$.
We color $q_1$ by $c_1$ and $q_3$ by $c_3$ and greedily extend this coloring to an $(L':a)$-coloring $\varphi_2$ of $H$.
However, then $\varphi_1\cup\varphi_2$ is an $(L:a)$-coloring of $G$ extending $\psi_c$,
which is a contradiction.
\end{subproof}

Recall $K$ is the cycle bounding the outer face of $G$.  If $G=K$, then since
$|L(v)|\le\deg(v)a$ for every $v\in V(G)\setminus V(P)$ and the distance between flaws is at least three, this is only possible if $\ell=2$ and $|K|=5$.
If $G$ consists of $K=x_1\ldots x_k$ and a vertex $z$ with three neighbors on $K$, then since the distance between flaws is at least three,
it follows that $k=9$, $\ell=2$, and if say $P=x_4x_5x_6$, then $z$ is adjacent to $x_2$, $x_5$, and $x_8$, and $|L(x_1)|=|L(x_3)|=|L(x_7)|=|L(x_9)|=2a$.
In either case, $\psi_c$ extends to an $(L:a)$-coloring of $G$ by the choice of $c=c(G,L,P)$, which is a contradiction.
Since $K$ is an induced cycle, it follows that $K$ is not tame, and by \refclaim{cl-cyc}, we have $|K|\ge 10$.

\claim{cl-2chor}{Let $Q=u_1u_2u_3$ be a path in $G$ with $u_1,u_3\in V(K)$ and $u_2\not\in V(K)$, such that neither $u_1$ nor $u_3$ is the middle vertex of $P$
when $P$ has length two.  Let $G=G_1\cup G_2$, where $G_1$ and $G_2$ are proper induced subgraphs of $G$ intersecting in $Q$
and $P\subset G_1$.  Then for $i\in \{1,3\}$, either $u_i\in V(P)$ or $|L(u_i)|=3a$, and $u_i$ is connected to a flaw (with respect to $Q$) in $G_2$ in a unique way.}
\begin{subproof}
By the minimality of $G$, $\psi_c$ extends to an $(L:a)$-coloring $\varphi_1$ of $G_1$.
Let $\psi$ be the restriction of $\varphi_1$ to $u_1u_2u_3$.  Since $\psi_c$ does not extend to an $(L:a)$-coloring of $G$,
we conclude that $\psi$ does not extend to an $(L:a)$-coloring of $G_2$.  By the minimality of $G$, it follows that
for $i\in \{1,3\}$, the vertex $u_i$ is connected to a flaw in $G_2$.  Since the distance between flaws in $G$ is at least three, this
excludes the case that $u_i\not\in V(P)$ and $|L(u_i)|=2a$.  In particular, every path $xyz$ in $G_2$ with $x,z\in V(K)$, $z\not\in V(P)$ and $|L(z)|=2a$
is a subpath of $K$.  Consequently, the path connecting $u_i$ to a flaw in $G_2$ in contained in $K$, and thus it is unique.
\end{subproof}

In particular, we have the following useful observation.
\claim{cl-2chorcor}{Let $Q$ be a path in $G$ of length at most two, whose ends belong to $K$ and are not equal to the middle vertex of $P$ when
$P$ has length two.  If $Q$ has length two, furthermore assume that $Q$ has an endvertex with list of size $2a$ not belonging to $V(P)$.
Then $Q$ is a subpath of $K$.}

\claim{cl-3chor}{Let $Q=u_1u_2u_3u_4$ be a path in $G$ with $u_1\in V(K)$, $u_4\in V(K)\setminus V(P)$, and $u_2,u_3\not\in V(K)$, such that $|L(u_4)|=2a$ and $u_1$ is not the middle vertex of $P$
when $P$ has length two.  Let $G=G_1\cup G_2$, where $G_1$ and $G_2$ are proper induced subgraphs of $G$ intersecting in $Q$
and $P\subset G_1$.  Then $u_1$ and $u_3$ are connected to flaws (with respect to $u_1u_2u_3$) in $G_2$ in a unique way, and any $(u_1,c(G_2,u_1u_2u_3,L))$-disjoint $(L:a)$-coloring of $u_1u_2u_3u_4$ 
extends to an $(L:a)$-coloring of $G_2$.}
\begin{subproof}
By the minimality of $G$, $\psi_c$ extends to an $(L:a)$-coloring $\varphi_1$ of $G_1$.
Let $\psi$ be the restriction of $\varphi_1$ to $u_1u_2u_3$ and let $L'$ be the list assignment obtained from $L$
by setting $L'(u_4)=\varphi_1(u_3)\cup\varphi_1(u_4)$.  Since $\psi_c$ does not extend to an $(L:a)$-coloring of $G$,
the precoloring $\psi$ of $u_1u_2u_3$ does not extend to an $(L':a)$-coloring of $G_2$.  By the minimality of $G$, it follows that
$u_1$ and $u_3$ are connected to flaws in $G_2$, and by \refclaim{cl-2chorcor}, the paths connecting them to flaws
are subpaths of $K+u_3u_4$; hence, they are unique.

Consider now a $(u_1,c(G_2,u_1u_2u_3,L))$-disjoint $(L:a)$-coloring $\theta$ of $u_1u_2u_3u_4$.
Let $L''$ be the list assignment obtained from $L$ by setting $L''(u_4)=\theta(u_3)\cup\theta(u_4)$.
Note that $c(G_2,u_1u_2u_3,L)=c(G_2,u_1u_2u_3,L'')$, and thus the restriction of $\theta$ to $u_1u_2u_3$
extends to an $(L'':a)$-coloring $\varphi_2$ of $G_2$ by the minimality of $G$.  Observe that $\varphi_2$ is an $(L:a)$-coloring
of $G_2$ which extends $\theta$.
\end{subproof}

Let $K=p_\ell\ldots p_0v_1v_2\ldots v_k$, and let $v_0=p_0$.
Suppose first that $p_0$ is adjacent to a flaw, necessarily $v_1v_2$ since $K$ is an induced cycle (in case $\ell=0$
and the flaw is $v_kv_{k-1}$, we relabel the vertices).
If $\ell\le 1$, then let $\psi$ be obtained from $\psi_c$ by choosing $\psi(v_1)$ as an $a$-element subset
of $L(v_1)\setminus\psi_c(p_0)$, and let $P'=p_\ell\ldots p_0v_1$.  Note that $v_1$ is not connected to a
flaw, since the distance between flaws is at least three.  Hence, $\psi$ (and thus also $\psi_c$) extends
to an $(L:a)$-coloring of $G$ (recall we chose a counterexample with $\ell$ maximum).
This is a contradiction, and thus $\ell=2$.  Then $\psi_c(p_0)\cap L(v_1)=\emptyset$, since $c=L(v_1)$, and $p_0$ is not connected
to a flaw in $G-p_0v_1$.  Hence, $\psi_c$ extends to an $(L:a)$-coloring of $G-p_0v_1$ by the minimality of $G$,
and the resulting coloring is also proper in $G$.  This is a contradiction, showing that $p_0$ is not adjacent
to a flaw.  That is, the minimum index $b\ge 1$ such that $|L(v_b)|=3a$ satisfies $b\in\{1,2\}$.

\claim{cl-b1}{$|L(v_{b+1})|=2a$.}
\begin{subproof}
Suppose for a contradiction that $|L(v_{b+1})|=3a$.  Consider the graph $G'=G-v_{b-1}v_b$.
Let $\psi=\psi_c$ if $b=1$ and let $\psi$ be an $(L:a)$-coloring extending $\psi_c$ to the path $p_\ell\ldots p_0v_1$
if $b=2$.  Let $L'$ be the list assignment for $G'$ obtained from $L$ by choosing $L'(v_b)$ as a $2a$-element subset of
$L(v_b)\setminus \psi(v_{b-1})$ and if $b=2$, additionally setting $L'(v_1)=\psi(v_1)\cup \psi(p_0)$.
Since $K$ is an induced cycle, $v_b$ has no neighbor with list of size $2a$ in $G'$.  Hence, the distance between flaws of $G'$ is at least three.
Furthermore, since $G$ has girth at least $5$, $v_b$ is not adjacent to $p_0$; together with \refclaim{cl-2chor}, this implies that $p_0$
is not connected to a flaw in $G'$.  By the minimality of $G$, we conclude that $\psi_c$ extends to an $(L':a)$-coloring
of $G'$.  This gives an $(L:a)$-coloring of $G$ extending $\psi_c$, which is a contradiction.  Therefore, $|L(v_{b+1})|=2a$.
\end{subproof}

Suppose now that $b=2$ and $|L(v_4)|=2a$, and thus $p_0$ is connected to the flaw $v_3v_4$.
If $\ell\le 1$, then let $\psi$ be obtained from $\psi_c$ by choosing $\psi(v_1)$ as an $a$-element subset
of $L(v_1)\setminus\psi_c(p_0)$, and let $P'=p_\ell\ldots p_0v_1$.  Note that $v_1$ is not connected to a
flaw, since it has no neighbor with list of size $2a$ not belonging to $P$.  Hence, $\psi$ (and thus also $\psi_c$) extends
to an $(L:a)$-coloring of $G$ (since we chose a counterexample with $\ell$ maximum). This is a contradiction,
and thus $\ell=2$.  Then by the choice of $c$, $\psi_c$ extends to an $(L:a)$-coloring $\psi'$ of $p_2p_1p_0v_1v_2$
such that $\psi'(v_2)\cap L(v_3)=\emptyset$.  Let $L'$ be the list assignment for $G-v_2$ obtained
from $L$ by setting $L'(v_1)=\psi'(v_1)\cup \psi'(p_0)$ and for each vertex $v\neq v_1$ adjacent to $v_2$,
choosing $L'(v)$ as a $2a$-element subset of $L(v)\setminus \psi'(v_2)$.  By \refclaim{cl-2chorcor} and the assumption that $G$ has girth
at least $5$, we conclude that $p_0$ is not connected to a flaw in $G-v_2$ with the list assignment $L'$,
and by the minimality of $G$, there exists an $(L':a)$-coloring of $G-v_2$ extending $\psi_c$.  However,
this implies that $G$ has an $(L:a)$-coloring extending $\psi_c$, which is a contradiction.
Since we already excluded the possibility that $p_0$ is adjacent to a flaw, together with
\refclaim{cl-2chorcor} this implies the following claim.
\claim{cl-p0nofl}{The vertex $p_0$ is not connected to a flaw.}

We now prove two auxiliary claims concerning special configurations appearing close to $P$.

\claim{cl-badchords}{Suppose $|L(v_{b+3})|=2a$ (and consequently $|L(v_{b+2})|=3a$) and $G$ contains a $5$-face bounded by
a cycle $v_bv_{b+1}v_{b+2}x_2x_0$.  Then $x_2$ does not have a neighbor in $P$ and $G$ does not contain a path $x_iyz$
with $i\in \{0,2\}$, $y\not\in\{v_b,v_{b+2}\}$, $z\in V(K)\setminus V(P)$, and $|L(z)|=2a$.}
\begin{subproof}
Claim~\ref{cl-cyc} implies $x_0,x_2\not\in V(K)$, since $K$ is an induced cycle and $v_b$ and $v_{b+2}$ have degree greater than two.
Since $x_0$ has degree at least three, \refclaim{cl-cyc} also implies that $x_2$ has no neighbor in $P$.

Suppose now that $G$ contains a path $x_iyz$ as described in the statement of the claim, for some $i\in\{0,2\}$.  Let $G=G_1\cup G_2$, where $G_1$ and $G_2$ are proper induced subgraphs of $G$,
$P\subset G_1$, and $G_1$ intersects $G_2$ only in the path $Q=v_{b+i}x_iy$ or $Q=v_{b+i}x_iyz$, depending on whether $y\in V(K)$ or not.
Let $c'=c(G_2,v_{b+i}x_iy,L)$.

If $i=0$, then note that $v_b$ is not adjacent to a flaw in $G_2$, and thus $|c'|\le a$.
Hence, $\psi_c$ extends to a $(v_b,c')$-disjoint $(L:a)$-coloring $\psi$ of the path $p_\ell\ldots p_0v_1\ldots v_b$.
By the minimality of $G$ and \refclaim{cl-3chor}, any $(v_b,c')$-disjoint $(L:a)$-coloring of $Q$ extends to an $(L:a)$-coloring of $G_2$.
Since $\psi_c$ does not extend to an $(L:a)$-coloring of $G$, it follows that $\psi$ does not extend to an $(L:a)$-coloring of $G_1$.
Consider the graph $G_1-v_b$ with a list assignment $L_1$ obtained from $L$ by setting $L_1(v_1)=\psi(v_1)\cup \psi(p_0)$ if $b=2$,
and by choosing $L_1(v)$ as a $2a$-element subset of $L(v)\setminus \psi(v_b)$ for each neighbor $v$ of $v_b$ other than $v_{b-1}$.
By \refclaim{cl-2chorcor}, we conclude that such a neighbor $v$ cannot be adjacent to another vertex with list of size two,
and that $p_0$ is not connected to a flaw in $G_1-v_b$ with the list assignment $L_1$.  By the minimality of $G$,
$\psi_c$ extends to an $(L_1:a)$-coloring of $G_1-v_b$; however, this also implies that $\psi$ extends to an $(L:a)$-coloring of $G$, which is a contradiction.

Hence, $i=2$.  Observe $\psi_c$ extends to an $(L:a)$-coloring $\psi$ of the path $p_\ell\ldots p_0v_1\ldots v_{b+2}$ such that $\psi(v_{b+2})\cap L(v_{b+3})=\emptyset$.
By the minimality of $G$ and \refclaim{cl-3chor}, any $(v_b,c')$-disjoint $(L:a)$-coloring of $Q$ extends to an $(L:a)$-coloring of $G_2$,
and thus $\psi$ does not extend to an $(L:a)$-coloring of $G_1$.  Suppose first that $\ell\le 1$, or $b=1$, or $\ell=2$ and $p_2$ is not connected to a flaw in $G_1$.
Let $L'_1$ be a list assignment for $G_1'=G_1-\{v_{b+1}, v_{b+2}\}$ obtained by setting $L'_1(v_i)=\psi(v_{i-1})\cup \psi(v_i)$ for $i=1,\ldots, b$
and choosing $L'_1(x_2)$ as a $2a$-element subset of $L(x_2)\setminus \psi(v_{b+2})$.  By \refclaim{cl-2chor}, $x_2$ is not incident with a flaw in $G'_1$,
and the only vertices with list of size $2a$ (not belonging to $P$) at distance at most two from $v_b$ are $x_2$ and possibly $v_{b-1}$.
We created at most one flaw ($v_1v_2$ when $b=2$), and a flaw of $G$ belongs to $G_2$ by \refclaim{cl-2chor} and \refclaim{cl-3chor}; hence, $G'_1$ has at most two flaws, and the distance between them
is at least three.  If $\ell=2$, then by the assumptions either $b=1$ (and then $p_0$ is not connected to a flaw in $G'_1$), or
$b=2$ and $p_2$ is not connected to a flaw in $G$ (and then $p_2$ is not connected to a flaw in $G'_1$, either, since $v_1$, $v_2$, and $x_2$ are not adjacent to $p_2$).
By the minimality of $G$, we conclude that $\psi_c$ extends to an $(L'_1:a)$-coloring of $G'_1$.  Consequently, $\psi$ extends to an $(L:a)$-coloring of $G_1$,
which is a contradiction.

Hence, we can assume that $\ell=2$, $b=2$, and $p_2$ is connected to a flaw $uv$ in $G_1$.  Suppose now that the distance between $\{u,v\}$ and $\{x_0,x_2\}$
is at least three.  Then let $G''_1=G_1-\{v_2,v_3,v_4\}$ with list assignment $L''_1$ obtained from $L$ by setting $L''_1(v_1)=\psi(p_0)\cup \psi(v_1)$
and by choosing $L''_1(w)$ as a $2a$-element subset of $L(w)\setminus \psi(r)$ for each vertex $w\neq v_1$ with a neighbor $r\in \{v_2,v_4\}$.
By \refclaim{cl-2chorcor}, neither $x_0$ nor $x_2$ has a neighbor with list of size $2a$ not in $V(P)\cup\{x_0,x_2\}$.
Since $G$ has girth at least $5$ and by \refclaim{cl-cyc}, $v_1$ is at distance at least three in $G''_1$ from the newly created flaw $x_0x_2$,
and thus $p_0$ is not connected to a flaw in $G''_1$.  Note that $uv$ and $x_0x_2$ are the only flaws in $G_1$, since $G_2$ contains a flaw and $G$ contains at most two flaws.
By the assumption, the distance between the flaws $uv$ and $x_0x_2$ is at least three.  By the minimality of $G$, we conclude that $\psi_c$ extends to an $(L''_1:a)$-coloring of
$G''_1$.  Consequently, $\psi$ extends to an $(L:a)$-coloring of $G_1$, which is a contradiction.

Hence, by \refclaim{cl-2chorcor}, the distance between $\{x_0,x_2\}$ and $\{u,v\}$ is exactly two, and we can without loss of generality
assume $z=v$.  Since $y$ is connected to a flaw in $G_2$ by \refclaim{cl-2chor} and \refclaim{cl-3chor} and the distance between flaws in $G$ is at least three,
we conclude that $y\in V(K)$ and $y$ is connected but not adjacent to a flaw in $G_2$.  Since $G$ contains only two flaws,
$v_4$ is by \refclaim{cl-2chor} connected to the same flaw in $G_2$.  By \refclaim{cl-cyc}, we conclude that
$G_2$ consists of the $9$-cycle $v_4\ldots v_{11}x_2$ (where $y=v_{11}$) and a vertex $t$ adjacent to $x_2$, $v_6$, and $v_9$.

The flaw $uv=v_{13}v_{12}$ is connected to $p_2$; hence, the outer face of $G_1$ has length $11$ or $13$.  
Note that there exists a $(y,c(G_2,yx_2v_4,L))$-disjoint $(L:a)$-coloring $\psi_3$ of the path $P_3=v_{11}v_{12}\ldots p_2p_1p_0v_1$ of length $6$ or $8$.
If $G_3=G_1-\{v_3,v_4\}$ had an $(L:a)$-coloring extending $\psi_3$, then we could extend it to $G_1$ greedily and then to $G_2$ by the minimality of $G$ since $\psi_3$ is $(y,c(G_2,yx_2v_4,L))$-disjoint,
obtaining an $(L:a)$-coloring of $G$.  This is not possible, and thus $\psi_3$ does not extend to an $(L:a)$-coloring of $G_3$.  Furthermore, no vertex in $V(G_3)\setminus V(P_3)$
has list of size $2a$, and by \refclaim{cl-3chor}, the distance between $v_1$ and $v_{11}$ in $G_3$ is at least $4$.  
Since $\psi_3$ does not extend to an $(L:a)$-coloring of $G_3$, \refclaim{cl-cpath} together with \refclaim{cl-2chorcor} imply that $P_3$ has length $8$
and $G$ contains a vertex $w$ adjacent to $p_1$, $v_{11}$, and $v_{14}$.  Since $x_0$ has degree at least three,
\refclaim{cl-cyc} implies that $x_0$ is adjacent to $p_1$, and by \refclaim{cl-cyc} and \refclaim{cl-2chorcor}, we conclude this uniquely determines the whole graph $G$.
However, this contradicts \refclaim{cl-35}, applied to the cycle $x_2tv_9v_8\ldots v_2x_0$.

This contradiction shows that no path $v_{b+i}x_iyz$ as described in the statement of the claim exists.
\end{subproof}

\claim{cl-flawchords}{Suppose $b=1$, $|L(v_3)|=2a$, $|L(v_4)|=3a$, $|L(v_5)|=2a$, and a cycle $v_2v_3v_4x_4x_2$ (where possibly $x_2=v_1$) bounds a $5$-face.
Then $x_4$ does not have a neighbor in $P$ and $G$ does not contain a path $x_iyz$
with $i\in \{2,4\}$, $y\not\in\{v_2,v_4\}$, $z\in V(K)\setminus V(P)$, and $|L(z)|=2a$.}
\begin{subproof}
Note that $|L(v_2)|=2a$ by \refclaim{cl-b1}, and thus $v_2v_3$ is a flaw.

Suppose $x_4$ has a neighbor in $P$.  By \refclaim{cl-cyc} and the assumption that $G$ has girth at least $5$, we conclude that
$\ell=2$, $x_4p_2\in E(G)$, and $x_2=v_1$.  Let $G_2=G-\{p_1,p_0,v_1,v_2,v_3\}$.  By \refclaim{cl-2chor},
both $p_2$ and $v_4$ are connected to a flaw in $G_2$.  Since $G$ has at most two flaws and $v_2v_3$ is one of them,
both $p_2$ and $v_4$ are connected to the same flaw, and thus the outer face of $G_2$ is bounded by a $7$- or $9$-cycle.
By \refclaim{cl-cyc} and the fact that vertices with list of size $3a$ not in $P$ must have degree at least three, we conclude
that $G_2$ is bounded by a $9$-cycle and contains a vertex $w$ adjacent to $v_6$, $v_9$, and $x_4$.
However, this contradicts \refclaim{cl-35} for the cycle $x_4wv_9v_8\ldots v_1$.  Hence, $x_4$ has no neighbor in $P$.

Suppose now that $G$ contains a path $x_iyz$ as described in the statement of the claim, for some $i\in\{2,4\}$.  Let $G=G_1\cup G_2$, where $G_1$ and $G_2$ are proper induced subgraphs of $G$,
$P\subset G_1$, and $G_1$ intersects $G_2$ only in the path $Q=v_ix_iy$ or $Q=v_ix_iyz$, depending on whether $y\in V(K)$ or not.
If $i=2$, then by \refclaim{cl-2chorcor} we have $x_2\neq v_1$.  By \refclaim{cl-2chor} and \refclaim{cl-3chor}, $v_2$ is connected to a flaw in $G_2$.
However, since $v_2v_3$ is a flaw in $G$ (but not $G_2$, with respect to $Q$), this contradicts the assumption that the distance between flaws is at least three.

Hence, we have $i=4$.  By \refclaim{cl-2chor} and \refclaim{cl-3chor}, $G_2$ contains a flaw, and since $G$ has at most two flaws,
$v_2v_3$ is the only flaw in $G_1$.  Let $c'=c(G_2,v_4x_4y,L)$; the flaw of $G_2$ is at distance at least three from $v_2v_3$,
and thus it is not adjacent to $v_4$, implying $|c'|=a$.  Observe that there exists a $(v_4,c')$-disjoint $(L:a)$-coloring $\psi$ of
$p_\ell\ldots p_0v_1v_2v_3v_4$ extending $\psi_c$.  By \refclaim{cl-2chor} and \refclaim{cl-3chor}, $\psi$ does not extend to an $(L:a)$-coloring of $G_1$,
as otherwise the resulting coloring would further extend to $G_2$ and give an $(L:a)$-coloring of $G$ extending $\psi_c$.
If $x_2\neq v_1$, then let $G'_1=G_1-\{v_3,v_4\}$, otherwise let $G'_1=G_1-\{v_2,v_3,v_4\}$.  Let $L'_1$ be the list assignment for $G'_1$ obtained 
from $L$ by setting $L_1'(v_1)=\psi(v_1)\cup \psi(p_0)$, $L_1'(v_2)=\psi(v_2)\cup \psi(v_1)$ when $x_2\neq v_1$, and choosing $L_1'(x_4)$ as a $2a$-element
subset of $L(x_4)\setminus\psi(v_4)$.  Let $e=v_1v_2$ if $x_2\neq v_1$ and $e=v_1x_4$ otherwise.  Then $e$ is the only flaw in $G'_1$
and when $\ell=2$, the vertex $p_2$ is not connected to $e$, by \refclaim{cl-2chorcor} and the fact that $x_4$ is not adjacent to $p_2$.
Hence, $\psi_c$ extends to an $(L'_1:a)$-coloring of $G'_1$ by the minimality of $G$, and thus $\psi$ extends to an $(L:a)$-coloring of $G_1$,
which is a contradiction.
\end{subproof}

Recall that $L(v_i)=2a$ for $1\le i\le b-1$, $|L(v_b)|=3a$, and $|L(v_{b+1})|=2a$ by \refclaim{cl-b1}. 
Furthermore, by \refclaim{cl-p0nofl} the vertex $p_0$ is not connected to a flaw, and thus if $b=2$, then $|L(v_{b+2})|=3a$.  Let us now define a set $X$
of vertices of $G$ depending on the sizes of lists of $v_{b+2}$, $v_{b+3}$, and $v_{b+4}$ as follows.
\begin{itemize}
\item[(X1)] If $b=1$, $|L(v_3)|=2a$ and $|L(v_4)|=|L(v_5)|=3a$, then $X=\{v_2,v_3\}$.
\item[(X2)] If $b=1$, $|L(v_3)|=2a$, $|L(v_4)|=3a$, $|L(v_5)|=2a$, and $v_2v_3v_4$ is not a subpath of the boundary of a $5$-face, then $X=\{v_2,v_3,v_4\}$.
\item[(X2a)] If $b=1$, $|L(v_3)|=2a$, $|L(v_4)|=3a$, $|L(v_5)|=2a$, and a cycle $v_2v_3v_4rs$ (where possibly $s=v_1$) bounds a $5$-face, then $X=\{v_2,v_3,v_4, r,s\}$.
\item[(X3)] If $|L(v_{b+2})|=|L(v_{b+3})|=3a$, then $X=\{v_b,v_{b+1}\}$.
\item[(X4)] If $|L(v_{b+2})|=3a$, $|L(v_{b+3})|=2a$, and $v_bv_{b+1}v_{b+2}$ is not a subpath of the boundary of a $5$-face, then $X=\{v_b,v_{b+1},v_{b+2}\}$.
\item[(X4a)] If $|L(v_{b+2})|=3a$, $|L(v_{b+3})|=2a$, and a cycle $v_bv_{b+1}v_{b+2}rs$ bounds a $5$-face, then $X=\{v_b,v_{b+1},v_{b+2},r,s\}$.
\end{itemize}
Let $p$ be the minimum index such that $v_p\in X$, and let $m$ be the maximum index such that $v_m\in X$; note that $m\le 4$.
Observe that there exists an $(L:a)$-coloring $\psi$ of $G[V(P)\cup X\cup \{v_1,\ldots, v_{p-1}\}]$ extending $\psi_c$ such that
if $|L(v_{m+1})|=2a$, then $\psi(v_m)\cap L(v_{m+1})=\emptyset$; in cases (X2a) and (X4a), this holds since $r$ has no neighbor
in $P$ by \refclaim{cl-badchords} and \refclaim{cl-flawchords}. 

Note that since $G$ has girth at least $5$, each vertex of $V(G)\setminus X$ has at most
one neighbor in $X$. 
Furthermore, there is no vertex $v\in V(G)\setminus (X\cup V(P)\cup \{v_{p-1},v_{m+1}\})$ such that $|L(v)|=2a$ and $v$ has a neighbor $x\in X$,
since otherwise $Q=vx$ if $x\in V(K)$, $Q=vrv_m$ if $x=r$, or $Q=vsv_{m-2}$ if $x=s$ would contradict \refclaim{cl-2chorcor}.

Let $G'=G-X$ and let $L'$ be the list assignment defined as follows.  If $v\in V(G)\setminus (X\cup V(P)\cup \{v_1,\ldots,v_{p-1}\})$ has a neighbor $x\in X$,
then let $L'(v)$ be a $2a$-element subset of $L(v)\setminus\psi(x)$, otherwise let $L'(v)=L(v)$.  For $i=1,\ldots, p-1$, let $L'(v_i)=\psi(v_{i-1})\cup \psi(v_i)$.
The choice of $X$ and $\psi$ ensures that $|L'(v)|\in \{2a,3a\}$ for all $v\in V(G')\setminus V(P)$,
and thus $L'$ is an $(a,P)$-valid list assignment for $G'$.
Note that any $(L':a)$-coloring of $G'$ extending $\psi_c$ would combine with $\psi$ to an $(L:a)$-coloring of $G$, and thus $\psi_c$ does not extend to an $(L':a)$-coloring of $G'$.

Suppose $uv$ is a flaw of $G'$ which does not appear in $G$, say $|L(u)|=3a$.  If $|L(v)|=3a$, then both $u$ and $v$ would have a neighbor in $X$;
since $G$ has girth at least $5$ and satisfies \refclaim{cl-cyc} (and noting the assumption of the non-existence of a $5$-cycle
containing the path $v_{m-2}v_{m-1}v_m$ in cases (X2) and (X4)), this is not possible.  The case $|L(v)|=2a$ is excluded by \refclaim{cl-2chorcor},
\refclaim{cl-badchords}, and \refclaim{cl-flawchords}.  Consequently, $G'$ has no new flaws, and in particular it has at most two flaws and the distance between them
is at least three.  Furthermore, $p_0$ is not adjacent to a flaw for the same reason.  By the minimality of $G$, since $\psi_c$ does not extend to an $(L':a)$-coloring of $G'$,
we conclude that $\ell=2$ and $p_0$ is connected but not adjacent to a flaw.  Let $Q=p_0xyuv$ be a path in $G'$ such that $x,y,u,v\not\in V(P)$ and $|L'(x)|=|L'(u)|=|L'(v)|=2a$.
As we argued before, $uv$ is also a flaw in $G$; in particular, $|L(u)|=2a$ and $u\in V(K)$.  Clearly $p_0xyu$ is not a subpath of $K$, and by \refclaim{cl-2chorcor}, we conclude that
$x\not\in V(K)$.  But then \refclaim{cl-2chor} or \refclaim{cl-3chor} applied to $p_0xyu$ would imply that $p_0$ is connected to
a flaw in $G$, which is a contradiction.

Therefore, no counterexample to Theorem~\ref{thm-2flaws} exists.
\end{proof}

Let us remark that it would be possible to exactly determine the minimal graphs $G$
satisfying the assumptions of Theorem~\ref{thm-2flaws} such that some
$(L:a)$-coloring of $P$ does not extend to an $(L:a)$-coloring of $G$ (there are only finitely many).
However, the weaker statement we gave is a bit easier to prove and sufficient in the application
we have in mind.

\section{Graphs with two special faces}\label{sec-cyl}

Let us note an easy consequence of a special case of Theorem~\ref{thm-2flaws} with no precolored path, obtained by an argument taken from~\cite{thomassen-surf}.
\begin{corollary}\label{cor-distflaws}
Let $a$ be a positive integer.  Let $G$ be a plane graph of girth at least $5$ and let $f$ be a face of $G$.  Let $L$ be a list assignment for $G$ such that
$|L(v)|\in\{2a,3a\}$ for all $v\in V(G)$ and the vertices with list of size $2a$ are all incident with $f$.
If each flaw is at distance at least three from any other vertex with list of size $2a$ and at least four from any other flaw, then $G$ is $(L:a)$-colorable.
\end{corollary}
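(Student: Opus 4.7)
The plan is to derive Corollary~\ref{cor-distflaws} from Theorem~\ref{thm-2flaws} by induction on $|V(G)|$, peeling off one flaw at a time.

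For the base step (no flaws in $G$), the flaw-separation hypothesis forbids two adjacent boundary vertices of $f$ from both having list of size $2a$, so there exists a vertex $p$ on the boundary of $f$ with $|L(p)|=3a$ (trivial cases $|V(G)|\le 1$ being immediate). Regarding $f$ as the outer face, pick an arbitrary $a$-subset $\psi(p)\subseteq L(p)$ and apply Theorem~\ref{thm-2flaws} with $P=p$: the list assignment is $(a,P)$-valid, there are no flaws (hence at most two, at distance trivially $\geq 3$), and $c(G,P,L)=\emptyset$ since $\ell\le 1$, so $\psi$ is $(p,\emptyset)$-disjoint and extends to an $(L:a)$-coloring of $G$.

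For the inductive step, let $uv$ be a flaw of $G$. Pick arbitrary $a$-subsets $\psi(u)\subseteq L(u)$ and $\psi(v)\subseteq L(v)\setminus\psi(u)$ (possible since $|L(v)\setminus\psi(u)|\ge a$). Form $G'=G-\{u,v\}$ and define $L'(w)$ to be a $2a$-subset of $L(w)\setminus\psi(u)$ when $w\in N_G(u)\setminus\{v\}$, a $2a$-subset of $L(w)\setminus\psi(v)$ when $w\in N_G(v)\setminus\{u\}$, and $L'(w)=L(w)$ otherwise. This is well-defined because the flaw-separation hypothesis forces every neighbor of $\{u,v\}$ other than $u,v$ to have $|L(w)|=3a$ in $G$, so at least $2a$ colors survive the subtraction. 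Let $f'$ be the face of $G'$ obtained by merging $f$ with every face of $G$ incident with $u$ or $v$.

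The key verification is that $(G',L',f')$ satisfies the hypotheses of the corollary. Every $2a$-vertex of $L'$ lies on $f'$: original $2a$-vertices lay on $f$, and each newly shrunken vertex is a neighbor of $\{u,v\}$ and therefore lies on some face that merges into $f'$. No new flaw arises in $G'$: girth $5$ forbids edges between any two neighbors of $u$, between any two neighbors of $v$, and between a neighbor of $u$ and a neighbor of $v$ (which would produce the $4$-cycle $uxyv$); and an edge between a new $2a$-vertex and an old $2a$-vertex $z\notin\{u,v\}$ would place $z$ within distance $2$ of the flaw $uv$, contradicting the separation hypothesis. Since deletion of vertices can only enlarge distances, all separation inequalities transfer from $G$ to $G'$; and any new $2a$-vertex $w\in N(u)$ is at distance at least $4-1=3$ in $G$ (hence in $G'$) from each remaining flaw. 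Applying the induction hypothesis (component-wise if $G'$ is disconnected, with arbitrary face choice in components containing no $2a$-vertex) yields an $(L':a)$-coloring $\varphi$, and extending it by $\varphi(u)=\psi(u)$, $\varphi(v)=\psi(v)$ gives the desired $(L:a)$-coloring of $G$, because by construction $\varphi(w)\subseteq L'(w)$ is disjoint from the color chosen on any precolored neighbor in $\{u,v\}$. The only point that requires care will be describing $f'$ when $u$ or $v$ is a cut-vertex, but since we only need \emph{some} face of $G'$ containing all the $2a$-vertices, a local analysis of the planar embedding around $u$ and $v$ suffices.
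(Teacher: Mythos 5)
Your proof takes essentially the same route as the paper's: induction on $|V(G)|$, peeling off one flaw at a time, and finishing the flaw-free base case via Theorem~\ref{thm-2flaws} with a single-vertex precolored path. The inductive step---coloring the flaw, deleting it, trimming neighboring lists to size $2a$, and verifying via girth and the distance hypotheses that no new flaw arises and all separation inequalities transfer---matches the paper's argument precisely. One small unnecessary detour in your base step: you assert that if there are no flaws then some boundary vertex of $f$ has a list of size $3a$, but this needn't hold (e.g.\ if $f$'s boundary is a single vertex or $G$ is edgeless near $f$); it is also irrelevant, since Theorem~\ref{thm-2flaws} applies to a single-vertex path $P=p$ regardless of whether $|L(p)|=2a$ or $3a$---the $(a,P)$-validity only requires $P$ to be $(L:a)$-colorable, and $c(G,P,L)=\emptyset$ when $\ell\le1$. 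Dropping that restriction makes the base step both simpler and fully correct.
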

\begin{proof}
We prove the claim by the induction on $|V(G)|$.  If $G$ contains a flaw $xy$, then choose an $(L:a)$-coloring $\psi$ of $G[\{x,y\}]$ arbitrarily,
and let $L'$ be the list assignment for $G'=G-\{x,y\}$ obtained from $L$ by choosing $L'(v)$ as a $2a$-element subset of $L(v)\setminus \psi(z)$
for every vertex $v$ with a neighbor $z\in\{x,y\}$.  Note that the neighbors of $\{x,y\}$ form an independent set, are not adjacent to other
vertices with list of size $2a$, and their distance to other flaws is at least three, and thus $G'$ with the list assignment $L'$ satisfies the assumptions
of Corollary~\ref{cor-distflaws}.  By the induction hypothesis, there exists an $(L':a)$-coloring of $G'$, which together with $\psi$ forms an $(L:a)$-coloring
of $G$.

Hence, we can assume that there are no flaws.  Let $p$ be an arbitrary vertex incident with $f$.  By Theorem~\ref{thm-2flaws}, any $(L:a)$-coloring of $p$
extends to an $(L:a)$-coloring of $G$.
\end{proof}

We now strengthen Corollary~\ref{cor-distflaws} to the case vertices with lists of size $2a$ are incident with two faces of $G$.

\begin{proof}[Proof of Theorem~\ref{thm-cyl}]
Suppose for a contradiction that $G$ is a counterexample with the smallest number of vertices.  Without loss of generality, we can assume $f_1$ is the outer face of $G$.
Clearly $G$ is connected, has minimum degree at least $2$, and vertices with list
of size $3a$ have degree at least three.  Note that $G$ has no flaws: otherwise, $(L:a)$-color the vertices of a flaw $xy$, delete $x$ and $y$, remove the colors of $x$ and $y$
from the lists of their neighbors, and reduce further the lists of these neighbors to size $2a$.  The neighbors form an independent set, are not incident to other
vertices with list of size $2a$, and their distance to other flaws is at least three, implying that the resulting graph would be a counterexample smaller than $G$
(or contradict Corollary~\ref{cor-distflaws}, if both $f_1$ and $f_2$ are incident with $\{x,y\}$, so
that all vertices with list of size $2a$ in $G-\{x,y\}$ are incident with one face).

Furthermore, the faces $f_1$ and $f_2$ are bounded by cycles: otherwise, if say $f_1$ is not bounded by a cycle, then $G=G_1\cup G_2$ for proper
induced subgraphs intersecting in a vertex $p$ incident with $f_1$, such that $f_2$ is a face of $G_1$ and the boundary of the outer face of $G_2$ is part of the boundary of $f_1$.
By the minimality of $G$, the graph $G_1$ is $(L:a)$-colorable, and by Theorem~\ref{thm-2flaws}, the corresponding coloring of $p$ extends to an $(L:a)$-coloring of $G_2$,
together giving an $(L:a)$-coloring of $G$.

Let $C_1$ and $C_2$ be the cycles bounding $f_1$ and $f_2$, respectively.  Analogously to the proof of \refclaim{cl-cyc}, we conclude that
the following holds.
\claim{cl-cocyc}{Every $(\le\!9)$-cycle $C$ in $G$ such that $f_2$ is not contained in $\cin(C)$ is tame.}

Next, let us restrict short paths with both ends in $C_1$ or both ends in $C_2$.

\claim{cl-sp}{Let $Q=q_0\ldots q_k$ be a path of length $k\le 4$ in $G$.   If $k=4$, furthermore assume that $|L(q_4)|=2a$.
If $q_0,q_k\in V(C_i)$ for some $i\in\{1,2\}$, then
\begin{itemize}
\item $Q\subset C_i$, or
\item $k\ge 3$, $q_3\in V(C_i)$, $|L(q_0)|=|L(q_3)|=3a$, and $q_0$ and $q_3$ have a common neighbor in $C_i$ with list of size $2a$, or
\item $k=4$, $|L(q_0)|=3a$, and $q_0$ is adjacent to $q_4$ in $C_i$.
\end{itemize}
In the last two cases, the cycle consisting of $q_0q_1q_2q_3$ and the common neighbor of $q_0$ and $q_3$ bounds a $5$-face.}
\begin{subproof}
We prove the claim by induction on $k$, the case $k=0$ being trivial.
By symmetry, we can assume that $i=1$.  Furthermore, we can assume that $q_1, \ldots, q_{k-1}\not\in V(C_1)$, since otherwise the claim
follows by the induction hypothesis applied to subpaths of $Q$ between vertices belonging to $C_1$.  Let $G=G_1\cup G_2$, where $G_1$ and $G_2$ are proper induced subgraphs of $G$ intersecting
in $Q$ and $f_2$ is a face of $G_1$.   By the minimality of $G$, there exists an $(L:a)$-coloring $\varphi_1$ of $G_1$.
If $k\le 2$, then the restriction of $\varphi_1$ to $Q$ extends to an $(L:a)$-coloring of $G_2$ by Theorem~\ref{thm-2flaws}.
This gives an $(L:a)$-coloring of $G$, which is a contradiction.

If $k=3$, then let $L'$ be the list assignment for $G_2$ such that $L'(q_3)=\varphi_1(q_3)\cup \varphi_1(q_2)$
and $L'(x)=L(x)$ for any other vertex $x\in V(G_2)\setminus\{q_3\}$,
and let $\psi$ be the restriction of $\varphi_1$ to $Q-q_3$.   Note that all neighbors of $q_3$ in $C_2\cap G_2$ (if any) belong to $Q$,
and using the induction hypothesis (the case $k=1$), observe that $q_3$ has exactly one neighbor in $C_1\cap G_2$;
hence, $q_3$ has at most one neighbor $u\in V(G_2)\setminus V(Q)$ with list of size $2a$, and thus the list assignment $L'$ has at most one flaw $q_3u$
with respect to $Q-q_3$.  Since $G$ is not $(L:a)$-colorable, $\psi$ does not extend to an $(L':a)$-coloring of $G_2$, and thus
Theorem~\ref{thm-2flaws} implies the flaw $q_3u$ is connected to $q_0$.  By \refclaim{cl-cocyc}
and the fact that vertices with list of size $3a$ have degree at least three, this is only possible when $u$ is adjacent to $q_0$,
and thus $q_0$ and $q_3$ have a common neighbor $u$ with list of size $2a$.  The path $q_0uq_3$ is a subpath of $C_1$ by the induction hypothesis (the case $k=2$).

Suppose now that $k=4$.  Let $L'$ be the list assignment for $G_2$ such that $L'(q_4)=\varphi_1(q_4)\cup \varphi_1(q_3)$,
$L'(q_0)=\varphi_1(q_0)\cup \varphi_1(q_1)$, and $L'(x)=L(x)$ for any other vertex $x\in V(G_2)\setminus\{q_0,q_4\}$,
and let $\psi$ be the restriction of $\varphi_1$ to $Q-\{q_0,q_4\}$.  Again, using the induction hypothesis (the case $k=1$),
we conclude $G_2$ with the list assignment $L'$ has at most one flaw $q_0u$ with respect to $Q-\{q_0,q_4\}$.
Since $G$ is not $(L:a)$-colorable, Theorem~\ref{thm-2flaws} implies the flaw $q_0u$ is connected to $q_3$.  By \refclaim{cl-cocyc}
and the fact that vertices with list of size $3a$ have degree at least three, this is only possible when $u$ is adjacent to $q_3$.
If $u\neq q_4$, then the induction hypothesis (the case $k=2$) applied to the path $q_4q_3u$ would imply $q_3\in V(C_1)$, a contradiction.
Consequently, we conclude that $u=q_4$.  Hence, $q_0$ is adjacent to $q_4$, and by the induction hypothesis (the case $k=1$), $q_0q_4$ is an edge of $C_1$.

In the last two cases, the cycle consisting of $q_0q_1q_2q_3$ and the common neighbor $u$ of $q_0$ and $q_3$ bounds a $5$-face by \refclaim{cl-cocyc}.
Furthermore, since $|L(u)|=2a$ and $G$ has no flaws, it follows that $|L(q_0)|=|L(q_3)|=3a$.
\end{subproof}
In particular, note this implies $C_1$ and $C_2$ are induced cycles.  Furthermore, if $k=3$ and $|L(q_0)|=2a$ or $|L(q_3)|=2a$,
or if $k=4$ and $|L(q_0)|=|L(q_4)|=2a$, then $Q$ must be a subpath of $C_1$ or $C_2$.

We now show that $G$ cannot have a short path between $C_1$ and $C_2$, successively showing better and better bounds on the distance
between $C_1$ and $C_2$.  Suppose $Q$ is an induced path with ends in $C_1$ and $C_2$ and otherwise disjoint from $C_1\cup C_2$.
Each edge $e\not\in E(Q)$ incident with a vertex of $Q$ goes either to the left or to the right side of $Q$ (as seen from $f_2$).
Let $R(Q)$ denote the set of edges $e\not\in E(Q)$ incident with vertices of $Q$ from the right side.

\claim{cl-dist1}{The cycles $C_1$ and $C_2$ are disjoint.}
\begin{subproof}
Suppose for a contradiction that there exists a vertex $w\in V(C_1\cap C_2)$.
Let $C_1=wv_1v_2\ldots$ and $C_2=wu_1u_2\ldots$, labeled in the clockwise order.   Without loss of generality (by choosing $w$ as an endvertex of a path forming a connected component
of $C_1\cap C_2$), we can assume $u_1\neq v_1$, and since $C_1$ and $C_2$ are induced cycles, it follows that $v_1\not\in V(C_2)$ and $u_1\not\in V(C_1)$.
By symmetry, we can assume that $|L(u_1)|\le |L(v_1)|$.  

If $|L(v_1)|=3a$, then let $G'=G-R(w)$, let $\psi$ be an $(L:a)$-coloring of $w$ such that if $|L(u_1)|=2a$ then $\psi(w)\cap L(u_1)=\emptyset$,
and let $L'$ be the list assignment obtained from $L$ by choosing $L'(x)$ as a $2a$-element subset of $L(x)\setminus\psi(w)$ for every vertex $x\neq w$ incident with an edge in $R(w)$;
since $G$ has girth at least five, these vertices form an independent set.  Consider such a vertex $x$.  If $x\not\in V(C_1\cup C_2)$, then $x$ cannot have a neighbor $z\in V(C_1\cup C_2)$
distinct from $w$ by \refclaim{cl-sp} applied to the path $wxz$, and thus $x$ is not contained in a flaw of $G'$.  If $x\in V(C_1\cup C_2)$, then since $C_1$ and $C_2$ are induced cycles,
we have $x\in \{u_1,v_1\}$.  Note that $v_1$ has no neighbor $r\in V(C_2)$ other than $w$, by \refclaim{cl-sp} applied to the path $wv_1r$ and using the fact that $v_1\not\in V(C_2)$.
Since $C_1$ is an induced cycle, it follows that $v_1$ can only be contained in a flaw $v_1v_2$ in $G'$.  Analogously, $u_1$ can only be contained in a flaw $u_1u_2$ (and in this case $|L(u_1)|=3a$).
If $|L(u_1)|=3a$, then observe that the distance between $u_1u_2$ and $v_1v_2$ is at least three:
otherwise by \refclaim{cl-cocyc} and the assumption that $G$ has girth at least five, we would conclude that
$u_1$ or $v_1$ has degree two, which is a contradiction since both have list of size $3a$.
By Theorem~\ref{thm-2flaws}, $\psi$ extends to an $(L':a)$-coloring of $G'$, which also gives an $(L:a)$-coloring of $G$.

If $|L(v_1)|=2a$ (and thus also $|L(u_1)|=2a$), then let $G'=G-R(w)-v_1$, let $\psi$ be an $(L:a)$-coloring of $wv_1$ such that $\psi(w)\cap L(u_1)=\emptyset$,
and let $L'$ be the list assignment obtained from $L$ by choosing $L'(x)$ as a $2a$-element subset of $L(x)\setminus\psi(z)$ for every vertex $x\in V(G)\setminus\{w,v_1\}$ with a neighbor $z$
such that either $z=v_1$, or $z=w$ and $wx\in R(w)$.  By \refclaim{cl-sp} and the assumption that $G$ has girth at least five, $G'$ has no flaws other than $v_2v_3$, and by Theorem~\ref{thm-2flaws}, the restriction of $\psi$ to $w$
extends to an $(L':a)$-coloring of $G'$, which also gives an $(L:a)$-coloring of $G$.

In both cases, we obtain a contradiction.
\end{subproof}

\claim{cl-dist2}{The distance between $C_1$ and $C_2$ is at least $2$.}
\begin{subproof}
Let $C_1=v_1v_2\ldots$ and $C_2=u_1u_2\ldots$, labeled in the clockwise order, and suppose for a contradiction $u_1v_1\in E(G)$.

Suppose first that $|L(v_2)|=3a$.  Let $G'=G-R(u_1v_1)$, let $\psi$ be an $(L:a)$-coloring of
$u_1v_1$ such that if $|L(u_2)|=2a$ then $\psi(u_1)\cap L(u_2)=\emptyset$,
and let $L'$ be the list assignment obtained from $L$ by choosing $L'(x)$ as a $2a$-element subset of $L(x)\setminus\psi(z)$ whenever
$xz\in R(u_1v_1)$ for some $z\in\{u_1,v_1\}$.
By \refclaim{cl-sp} and the assumption that $G$ has girth at least five, $G'$ has no flaws other than $v_2v_3$ and possibly $u_2u_3$ when $|L(u_2)|=3a$
(for example, if $xv_1\in R(u_1v_1)$, then $x$ has no neighbor $r$ in $C_2$ with $|L(r)|=2a$ by \refclaim{cl-sp} applied to the path $u_1v_1xr$).
If either $G'$ has at most one flaw, or the distance between $u_2u_3$ and $v_2v_3$ is at least three, then Theorem~\ref{thm-2flaws} implies
that $\psi$ extends to an $(L':a)$-coloring of $G'$, which also gives an $(L:a)$-coloring of~$G$.

Hence, suppose that both $u_2u_3$ and $v_2v_3$ are flaws (and thus $|L(u_2)|=3a$ and $|L(u_3)|=|L(v_3)|=2a$)
and the distance between them is at most two.
Since $u_2$ and $v_2$ have degree at least three, \refclaim{cl-cocyc} implies that $u_2$ and $v_2$ have a
common neighbor $w\not\in V(C_1\cup C_2)$ and the $5$-cycle $u_1u_2wv_2v_1$ bounds a face.
In that case, let $\psi'$ be an $(L:a)$-coloring of $u_2wv_2$ such that $\psi'(u_2)\cap L(u_3)=\emptyset$
and $\psi'(v_2)\cap L(v_3)=\emptyset$.  Let $G''=G-R(u_2wv_2)$, with list assignment $L''$ obtained
from $L$ by choosing $L''(x)$ as a $2a$-element subset of $L(x)\setminus\psi(z)$ whenever
$xz\in R(u_2wv_2)$ for some $z\in\{u_2,w,v_2\}$.  Since $w$ has degree at least three, \refclaim{cl-cocyc}
together with the assumption that $G$ has girth at least five implies that vertices incident with edges of $R(u_2wv_2)$ form an independent set in $G''$.
Furthermore, these vertices have no other neighbors with list of size $2a$ by \refclaim{cl-sp}
and the fact that $w$ has degree at least three (for example, if $xv_2\in R(u_2wv_2)$, then $x$ has no neighbor $r$ in $C_2$ with $|L(r)|=2a$,
since \refclaim{cl-sp} would imply $u_2wv_2xr$ is a cycle bounding a $5$-face, and thus $w$ would have degree two).  Hence, $G''$ with the list assignment $L''$ has no flaws,
and by Theorem~\ref{thm-2flaws} $\psi'$ extends to an $(L'':a)$-coloring of $G''$.  This gives an $(L:a)$-coloring of $G$, which is a contradiction.

Therefore, for $i\in\{1,2\}$, if $y\in V(C_i)$ is incident with an edge with the other end in $C_{3-i}$, then both neighbors of $y$ in $C_i$ have list of size $2a$,
and consequently $|L(y)|=3a$.

Hence, suppose $|L(v_2)|=|L(u_2)|=2a$.  As observed in the previous paragraph, this implies $v_2$ has no neighbor in $C_2$ and $u_2$ has no neighbor in $C_1$.
By symmetry, we can assume that at least half of the edges of $R(u_1v_1)$ are incident with $v_1$.
Let $G'=G-R(u_1v_1)-v_2$, let $\psi$ be an $(L:a)$-coloring of $u_1v_1v_2$ such that $\psi(u_1)\cap L(u_2)=\emptyset$,
and let $L'$ be the list assignment obtained from $L$ by choosing $L'(x)$ as a $2a$-element subset of $L(x)\setminus\psi(z)$ for every vertex $x\in V(G)\setminus\{u_1, v_1, v_2\}$ with a neighbor $z$
such that either $z=v_2$ or $xz\in R(u_1v_1)$.  If $G'$ with the list assignment $L'$ has no flaw other than $v_3v_4$,
then the restriction of $\psi$ to $u_1v_1$ extends to an $(L':a)$-coloring of $G$ by Theorem~\ref{thm-2flaws}, giving an $(L:a)$-coloring of~$G$.

Hence, assume that $G'$ has a flaw $uv$ other than $v_3v_4$.  Using \refclaim{cl-sp} and the assumption $G$ has girth at least five,
it is straightforward to show that (up to exchange of labels of $u$ and $v$) $u$ is adjacent to $u_1$ and $v$ is adjacent to $v_2$
(for example, if $|L(u)|=2a$ and $vv_2\in E(G)$, then $vu=v_3v_4$ in case $v\in V(C_1)$ by \refclaim{cl-sp} applied to $v_2vu$,
and either $v\in V(C_2)$ or $u=u_2$ in case $u\in V(C_2)$ by \refclaim{cl-sp} applied to $u_1v_1v_2vu$; the former is not possible since $v_2$ has
no neighbor in $C_2$, and in the latter case we have $uu_1,vv_2\in E(G)$).
By \refclaim{cl-cocyc}, the cycle $u_1v_1v_2vu$ bounds a face, and thus $v_1v_2$ is the only edge of $R(u_1v_1)$ incident with $v_1$.
Since at least half of the edges of $R(u_1v_1)$ are incident with $v_1$, it follows that $u_1u_2$ is the only edge of $R(u_1v_1)$ incident with $u_1$,
and thus $u=u_2$.  Since $u_2$ has no neighbor in $C_1$, it follows that $v\neq v_3$.
Note also that $G'$ does not contain any flaw distinct from $uv$ and $v_3v_4$, since we argued that any flaw
distinct from $v_3v_4$ must be incident with the $5$-face of $G'$ whose boundary contains the path $u_1v_1v_2$.
Since $\deg(v)\ge 3$ and $\deg(v_3)\ge 3$, \refclaim{cl-cocyc} together with the assumption that $G$ has girth at least five implies that the distance between $uv$ and $v_3v_4$ in $G'$ is at least three.
Hence, the restriction of $\psi$ to $u_1v_1$ extends to an $(L':a)$-coloring of $G$ by Theorem~\ref{thm-2flaws}, giving an $(L:a)$-coloring of~$G$.
This is a contradiction.
\end{subproof}

\claim{cl-dist3}{The distance between $C_1$ and $C_2$ is at least $3$.}
\begin{subproof}
Let $C_1=v_1v_2\ldots v_s$ and $C_2=u_1u_2\ldots u_t$, labeled in the clockwise order, and suppose for a contradiction $u_1$ and $v_1$ have a common neighbor $w$.
We flip the graph if possible so that $|L(u_2)|=|L(v_2)|=2a$.  If not (i.e., $|L(u_2)|+|L(v_2)|\ge 5a$ and $|L(u_t)|+|L(v_s)|\ge 5a$),
then instead flip the graph so that $w$ is incident with an edge of $R(u_1wv_1)$ (this is possible, since $\deg(w)\ge 3$).

Let $\psi$ be an $(L:a)$-coloring of $u_1wv_1$ such that $\psi(u_1)\cap L(u_2)=\emptyset$ if $|L(u_2)|=2a$ and
and $\psi(v_1)\cap L(v_2)=\emptyset$ if $|L(v_2)|=2a$.  Let $G'=G-R(u_1wv_1)$ and let $L'$ be the list assignment obtained from $L$ by choosing $L'(x)$ as a $2a$-element subset of
$L(x)\setminus\psi(z)$ whenever $xz\in R(u_1wv_1)$ for some $z\in\{u_1,w,v_1\}$.  Suppose first that $|L(u_2)|=|L(v_2)|=2a$.
By \refclaim{cl-sp}, \refclaim{cl-cocyc} and the assumption that $G$ has girth at least five, we conclude that $G'$ contains at most one flaw $uv$, with $u$ adjacent to $u_1$ and $v$ adjacent to $v_1$.
\refclaim{cl-sp} and the assumption that $G$ has girth at least $5$ also implies that this flaw is not connected to $u_1$ and $v_1$.
Hence, Theorem~\ref{thm-2flaws} implies that $\psi$ extends to an $(L':a)$-coloring of $G'$, which also gives an $(L:a)$-coloring of $G$.

Therefore, we can assume that $|L(u_2)|+|L(v_2)|\ge 5a$, and by the choice made in the first paragraph, $w$ is incident with an edge of $R(u_1wv_1)$
and we can assume that $|L(v_s)|=3a$.  By \refclaim{cl-sp}, \refclaim{cl-cocyc} and the assumption
that $w$ is incident with an edge of $R(u_1wv_1)$, $G'$ has no flaws other than $u_2u_3$ and $v_2v_3$. If it has both, then $|L(u_2)|=|L(v_2)|=3a$,
$\deg(u_2),\deg(v_2)\ge 3$, and the distance between the flaws is at least three by \refclaim{cl-cocyc} and the assumption
that $w$ is incident with an edge of $R(u_1wv_1)$.  No flaw in $G'$ is connected to $v_1$, since $|L'(v_s)|=|L(v_s)|=3a$.
Hence, Theorem~\ref{thm-2flaws} implies that $\psi$ extends to an $(L':a)$-coloring of $G'$, which also gives an $(L:a)$-coloring of $G$.

In both cases, we obtain a contradiction.
\end{subproof}

If $C_2$ contained three consecutive vertices with list of size $3a$, we could reduce the list of the middle one to $2a$ without violating the assumptions.
Hence, we can assume that no three consecutive vertices of $C_2$ have list of size $3a$.

Suppose now $C_2$ contains two consecutive vertices with list of size $3a$.
Let $u_1u_2u_3u_4u_5u_6$ be a walk around $C_2$ in clockwise direction (where $u_1=u_6$ if $|C_2|=5$) such that $|L(u_2)|=|L(u_3)|=3a$, and thus $|L(u_1)|=|L(u_4)|=2a$.
If $|L(u_6)|=3a$, then let $X=\{u_4\}$.  If $|L(u_6)|=2a$, then let $X=\{u_4,u_5\}$.  Let $\psi$ be an $(L:a)$-coloring of $G[X]$ such that $\psi(u_5)\cap L(u_6)=\emptyset$ when
$|L(u_6)|=2a$.  Let $L'$ be the list assignment obtained from $L$ by choosing $L'(v)$ as a $2a$-element subset of $L(v)\setminus \psi(x)$ for every vertex $v\in V(G)\setminus X$
with a neighbor $x\in X$.  By the assumption $G$ has girth at least five, \refclaim{cl-sp}, and \refclaim{cl-dist3}, $G-X$ with the list assignment $L'$ has no flaws, and an $(L':a)$-coloring of $G-X$
which exists by the minimality of $G$ together with $\psi$ gives an $(L:a)$-coloring of $G$, which is a contradiction.
We conclude that in $C_2$, the vertices with lists of size $2a$ and $3a$ alternate (and in particular, $|C_2|$ is even).

\claim{cl-dist4}{Let $u_1u_2u_3xy$ be a cycle bounding a $5$-face in $G$, where $u_1u_2u_3$ is a subpath of $C_2$, $|L(u_1)|=|L(u_3)|=3a$ and $|L(u_2)|=2a$.
Then $G$ does not contain a path $u_1ywv_1$ with $|L(v_1)|=2a$.}
\begin{subproof}
By \refclaim{cl-sp}, $v_1\not\in V(C_2)$, and thus $v_1\in V(C_1)$.  Let $C_1=v_1v_2\ldots v_s$ and $C_2=u_1u_2\ldots u_t$, labeled in the clockwise order.
Let $G'=G-R(u_1ywv_1)-u_2$, let $\psi$ be an $(L:a)$-coloring of $u_1ywv_1$ such that $\psi(u_1)\cap L(u_2)=\emptyset$, and let $L'$
be the list assignment for $G'$ obtained from $L$ by setting $L'(v_1)=\psi(v_1)\cup \psi(w)$ and by choosing $L'(v)$ as a $2a$-element subset of
$L(v)\setminus\psi(z)$ whenever $vz\in R(u_1ywv_1)$ for some $z\in\{y,w,v_1\}$.  By \refclaim{cl-cocyc}, \refclaim{cl-sp}, and \refclaim{cl-dist3},
the only possible flaws in $G'$ are $v_2v_3$ and an edge $u'v'$ such that $v_1wyu'v'$ is a $5$-cycle bounding a face of $G$.
Note that $v_1$ is the only neighbor of $w$ with list of size $2a$ in $G'$, and since $G$ has girth at least $5$, we conclude that $w$ is not connected to
either of the flaws.  Suppose that either $G'$ contains at most one flaw or $v'\neq v_2$. Since $\deg(v')\ge 3$ and $\deg(v_2)\ge 3$, \refclaim{cl-cocyc} implies
in the latter case that the distance between the flaws is at least three.  Consequently, the restriction of $\psi$ to $u_1yw$ extends to an $(L':a)$-coloring of $G'$
by Theorem~\ref{thm-2flaws}, giving an $(L:a)$-coloring of $G$.

Hence, $G'$ contains two flaws and $v'=v_2$.  Let $S$ be the set of edges incident with $u_1$ or $y$ distinct from $yu_1$ and not belonging to $R(u_1ywv_1)$, together with the edge $v_1v_2$.
Let $G''=G-S$.  Let $\psi'$ be an $(L:a)$-coloring of $u_1yu'v_2$ such that $\psi'(u_1)\cap L(u_t)=\emptyset$ and $\psi'(v_2)\cap L(v_1)=\emptyset$.
Let $L''$ be the list assignment obtained from $L$ by setting $L''(v_2)=\psi'(v_2)\cup\psi'(u')$ and by choosing $L''(v)$ as a $2a$-element subset of $L(v)\setminus\psi'(z)$ whenever $vz\in S$ for some $z\in\{u_1,y\}$.
By \refclaim{cl-sp} and \refclaim{cl-dist3}, the only flaws in $G''$ are $v_1w$ and $v_2v_3$, and since the girth of $G$ is at least $5$, \refclaim{cl-sp} implies that the distance between these two flaws in $G''$ is at least three.
Furthermore, neither of them is connected to $u_1$ by \refclaim{cl-dist3}.  Hence, the restriction of $\psi'$ to $u_1yu'$ extends to an $(L'',a)$-coloring of $G''$
by Theorem~\ref{thm-2flaws}, giving an $(L:a)$-coloring of $G$.

In both cases, we obtain a contradiction.
\end{subproof}

We now strengthen \refclaim{cl-sp}.
\claim{cl-four}{Let $Q=q_0\ldots q_4$ be a path in $G$ with $q_0,q_4\in V(C_2)$ and $q_1,q_2,q_3\not\in V(C_2)$.  Let $K$ be the cycle consisting
of $Q$ and the path $q_0u_1\ldots u_tq_4$ between $q_0$ and $q_4$ in $C_2$ such that $f_2$ does not lie in $\cin(K)$.  Then
\begin{itemize}
\item $|K|\le 6$ and $K$ bounds a face, or
\item $|K|=8$ and $q_2u_2\in E(\cin(K))$, or
\item $|K|=10$ and $\cin(K)$ consists of $K$, two further vertices $x$ and $y$, and edges $xy$, $xq_1$, $xu_2$, $yu_4$, and $yq_3$.
\end{itemize}}
\begin{subproof}
If $|L(q_0)|=2a$ or $|L(q_4)|=2a$, then $|K|=5$ by \refclaim{cl-sp}; hence, suppose $|L(q_0)|=|L(q_4)|=3a$.  Consequently, $t$ is odd and $|K|$ is even.
If $|K|\le 8$, then the claim follows from \refclaim{cl-cocyc}; hence, suppose $|K|\ge 10$ and $t\ge 5$.  By \refclaim{cl-sp}, we conclude that $K$ is an induced
cycle in $\cin(K)$.

By the minimality of $G$, there exists an $(L:a)$-coloring $\psi$ of $\cex(K)$, and we can extend $\psi$ to $u_1$ as well.
Let $G'=\cin(K)-\{q_0,u_1\}$.  Let $L'(q_1)=\psi(q_2)\cup \psi(q_1)$,  let $L'(z)=L(z)\setminus \psi(r)$ for every vertex $z\in V(G')\setminus\{q_1\}$
with a neighbor $r\in\{q_0,u_1\}$, and let $L'(z)=L(z)$ for any other vertex $z$ of $G'$.  Since $G$ has girth at least five,
the vertices with a neighbor in $\{q_0,u_1\}$ form an independent set, and by \refclaim{cl-sp}, $u_2u_3$ is the only flaw in $G'$.
If this flaw is not connected to $q_2$ or $q_4$, Theorem~\ref{thm-2flaws} implies the restriction of $\psi$ to $q_2q_3q_4$ extends to an $(L':a)$-coloring of $G'$,
which combines with $\psi$ to an $(L:a)$-coloring of $G$, a contradiction.

Hence, we can assume $u_2u_3$ is connected to both $q_2$ and $q_4$.  By \refclaim{cl-sp}, this is only possible if $t=5$, $|K|=10$, and $q_1$ and $u_2$ have a common neighbor $x$.
By symmetry, $q_3$ and $u_4$ have a common neighbor $y$.  By \ref{cl-cocyc} applied to the $8$-cycle $xu_2u_3u_4yq_3q_2q_1$, we have $xy\in E(G)$.  Note that $\cin(K)$ does not
contain any other vertices or edges by \refclaim{cl-cocyc}.
\end{subproof}

A vertex $u\in V(C_2)$ is \emph{rogue} if $|L(u)|=2a$ and either $\deg(u)\ge 3$ or $u$ is not incident with a $5$-face.

\claim{cl-all2prime}{The distance between any two rogue vertices of $C_2$ is at least four, and if $C_2$ contains a rogue vertex, then $|C_2|\ge 8$.}
\begin{subproof}
Let $u_1u_2u_3u_4u_5u_6u_7$ be a walk around $C_2$ in the clockwise direction, with odd-indexed vertices having list of size $2a$,
where $u_3$ is rogue.  Suppose for a contradiction that either $|C_2|=6$ or $u_5$ is rogue.

Let us first consider the case that $|C_2|=6$ and the vertex $u_5$ is not rogue.  Let $X'=\{u_1,\ldots, u_5\}$,
let $\psi'$ be an $(L:a)$-coloring of $u_1u_2u_3u_4$ such that $\psi'(u_4)\cap L(u_5)=\emptyset$,
and let $L''$ be the list assignment for $G''=G-X'$ obtained from $L$ by
choosing $L''(v)$ as a $2a$-element subset of $L(v)\setminus \psi'(x)$ for every vertex $v\in V(G'')$ with
a neighbor $x\in \{u_1,u_2,u_3,u_4\}$.  Note that we do not color the vertex $u_5$;
instead, observe that by the choice of $\psi'(u_4)$ and the fact that $\deg(u_5)=2$, any extension of $\psi'$ to $G-u_5$ extends to an $(L:a)$-coloring of $G$.
Since $u_3$ is rogue, \refclaim{cl-sp} and \refclaim{cl-dist3} imply that
$G''$ has no flaws.  By the minimality of $G$, the graph $G''$ is $(L'':a)$-colorable, which also gives an $(L:a)$-coloring of $G$;
this is a contradiction.

Hence, we can assume that $u_5$ is rogue, and if $|C_2|=6$, then by symmetry also $u_1$ is rogue.
Let $X=\{u_2,\ldots, u_6\}$, and let $\psi$ be an $(L:a)$-coloring of the path $G[X]$ such that $\psi(u_2)\cap L(u_1)=\emptyset$ and $\psi(u_6)\cap L(u_7)=\emptyset$.
Let $G'=G-X$ and let $L'$ be the list assignment for $G'$ obtained from $L$ by choosing $L'(v)$ as a $2a$-element subset of $L(v)\setminus \psi(x)$ for every vertex $v\in V(G')$ with
a neighbor $x\in X$ (note such a neighbor is unique by \refclaim{cl-sp}).  Suppose $uv$ is a flaw in $G'$, where $u\not\in V(C_2)$ and $u$ has a neighbor
$u_i$ in $X$.  By \refclaim{cl-sp} and \refclaim{cl-dist3}, we conclude that $v\not\in V(C_1\cup C_2)$ and $v$ has a neighbor $u_j$ in $X$.
By \refclaim{cl-sp} applied to $u_iuvu_j$, we conclude that $u_i$ and $u_j$ have a common neighbor $u_k$ with list of size $2a$ in $C_2$ and $u_k$ is not rogue.
Since $u_3$ and $u_5$, as well as $u_1$ if $|C_2|=6$, are rogue, it follows that $G'$ does not have any flaws.
By the minimality of $G$, the graph $G'$ is $(L':a)$-colorable. This also gives an $(L:a)$-coloring of $G$, which is a contradiction.
\end{subproof}

\claim{cl-ld}{Every vertex in $C_2$ with list of size $3a$ has degree at least $4$.}
\begin{subproof}
Suppose for a contradiction a vertex $u_4\in V(C_2)$ with list of size $3a$ has degree three, and let $u_1u_2u_3u_4u_5$ be a subpath of $C_2$.  By \refclaim{cl-all2prime}
and symmetry, we can assume $u_3$ is not rogue, and thus there exists a $5$-face bounded by the cycle $u_2u_3u_4uv$.  Subject to these conditions, choose the vertex $u_4$
and the path $u_1u_2u_3u_4u_5$ so that the degree of $u_2$ is as small as possible.

Let $X=\{u_2, u_3, v, u\}$, let $G'=G-X-u_4$,
and let $\psi$ be an $(L:a)$-coloring of the path $G[X]$ such that $\psi(u_2)\cap L(u_1)=\emptyset$ and $|L(u_4)\setminus(\psi(u_3)\cup \psi(u))|\ge 2a$;
such a coloring is obtained by first choosing $\psi(u_2)$ disjoint from $L(u_1)$, and $\psi(u_3)$ disjoint from $\psi(u_2)$,
then choosing $\psi(u)$ with as small intersection with $L(u_4)\setminus\psi(u_3)$ as possible, and finally coloring $v$.
Let $L'$ be the list assignment for $G'$ obtained from $L$ by choosing $L'(v)$ as a $2a$-element subset of $L(v)\setminus \psi(x)$ for every vertex $v\in V(G')$ with
a neighbor $x\in X$.  If $G'$ has no flaws, then by the minimality of $G$, the graph $G'$ is $(L':a)$-colorable, which together with $\psi$ gives an $(L:a)$-coloring of $G-u_4$.
This coloring extends to an $(L:a)$-coloring of $G$ by the choice of $\psi(u)$ and the assumption that $\deg(u_4)=3$, which is a contradiction.

Hence, suppose $G'$ has a flaw $yz$.  \refclaim{cl-sp}, \refclaim{cl-dist3}, and \refclaim{cl-dist4} imply $y,z\not\in V(C_1\cup C_2)$, and thus both $y$ and $z$ have a neighbor in $X$.
Since $G$ has girth at least five, we can assume $uy,u_2z\in E(G)$.  Note that $z\neq v$ since $G$ has girth at least five, and thus $\deg(u_2)\ge 4$.
In particular, by the choice of $u_4$ and the path $u_1u_2u_3u_4u_5$, we conclude that every non-rogue vertex with list of size $2a$ has a neighbor of degree at least four.

Let $K$ be the cycle consisting of the path $u_4uyzu_2$ and the path in $C_2$ between $u_2$ and $u_4$ such that $f_2$ is not contained in $\cin(K)$.
By \refclaim{cl-cocyc}, we have $v\not\in V(\cin(K))$, and thus $u_3\not\in V(K)$.  Since $|C_2|\ge 6$, we have $|K|\ge 8$, and \refclaim{cl-four} implies $u_5$ is a non-rogue vertex
with two neighbors of degree three.  This is a contradiction.
\end{subproof}

Since the last two outcomes of \refclaim{cl-four} are incompatible with \refclaim{cl-ld}, we obtain the following stronger claim.
\claim{cl-four-better}{Let $Q=q_0\ldots q_4$ be a path in $G$ with $q_0,q_4\in V(C_2)$ and $q_1,q_2,q_3\not\in V(C_2)$.  Let $K$ be the cycle consisting
of $Q$ and the path $q_0u_1\ldots u_tq_4$ between $q_0$ and $q_4$ in $C_2$ such that $f_2$ does not lie in $\cin(K)$.  Then $|K|\le 6$ and $K$ bounds a face.}

\claim{cl-all2}{No vertex of $C_2$ is rogue.}
\begin{subproof}
Let $u_1u_2u_3u_4u_5u_6u_7$ be a subpath of $C_2$ with odd-indexed vertices having list of size $2a$,
and suppose for a contradiction that $u_3$ is rogue. By \refclaim{cl-all2prime}, $|C_2|\ge 8$.

Let $X=\{u_2,\ldots, u_6\}$, and
let $\psi$ be an $(L:a)$-coloring of the path $G[X]$ such that $\psi(u_2)\cap L(u_1)=\emptyset$ and $\psi(u_6)\cap L(u_7)=\emptyset$.
Let $G'=G-X$ and let $L'$ be the list assignment for $G'$ obtained from $L$ by choosing $L'(v)$ as a $2a$-element subset of $L(v)\setminus \psi(x)$ for every vertex $v\in V(G')$ with
a neighbor $x\in X$ (note such a neighbor is unique by \refclaim{cl-sp}).  Suppose $uv$ is a flaw in $G'$, where $u\not\in V(C_2)$ and $u$ has a neighbor
$u_i$ in $X$.  By \refclaim{cl-sp} and \refclaim{cl-dist3}, we conclude that $v\not\in V(C_1\cup C_2)$ and $v$ has a neighbor $u_j$ in $X$.
By \refclaim{cl-sp} applied to $u_iuvu_j$, we conclude that $u_i$ and $u_j$ have a common neighbor $u_k$ in $C_2$ of degree two in $G$,
and $u_iuvu_ju_k$ bounds a $5$-face, where $|L(u_k)|=2a$ and $u_k$ is not rogue.   Therefore, we conclude that $uv$ is contained in the $5$-cycle
$u_4u_5u_6uv$ bounding a $5$-face, and thus $G'$ has at most one flaw $uv$.  If $uv$ is at distance at least three from any other vertex with list of size $2a$ in $G'$,
then by the minimality of $G$, the graph $G'$ is $(L':a)$-colorable.  This also gives an $(L:a)$-coloring of $G$, which is a contradiction.

Hence, suppose that $G'$ contains a vertex $w$ at distance two from $uv$ such that $|L'(w)|=2a$.
By \refclaim{cl-sp} and \refclaim{cl-dist4}, we have $w\not\in V(C_1\cup C_2)$, and thus $w$ has a neighbor $u_m\in X$.
Let $y$ denote the common neighbor of $w$ and $\{u,v\}$.  Since $u_4u_5u_6uv$ bounds a $5$-face, \refclaim{cl-four-better} applied to $u_4vywu_m$ or $u_6uywu_m$
implies that $u_4$ or $u_6$ has degree three, contradicting \refclaim{cl-ld}.
\end{subproof}

Now, we can further strengthen \refclaim{cl-four-better}.

\claim{cl-enopath}{There is no path $Q=q_0\ldots q_4$ in $G$ with $q_0,q_4\in V(C_2)$ and $q_1,q_2,q_3\not\in V(C_2)$.}
\begin{subproof}
Suppose such a path exists.  By \refclaim{cl-four-better}, either $q_0q_4\in E(C_2)$, or $q_0$ and $q_4$ have a common neighbor $u$ in $C_2$ and the $6$-cycle $K=q_0q_1q_2q_3q_4u$ bounds
a face distinct from $f_2$.  In the former case, since the sizes of lists in $C_2$ alternate, we can assume that $|L(q_4)|=2a$; but $\deg(q_4)\ge 3$ since $q_4q_3\in E(G)$,
and thus $q_4$ is rogue.  In the latter case, $\deg(u)=2$, and thus $|L(u)|=2a$; but $u$ is not incident with a $5$-face, and thus $u$ is rogue.
In both cases, this contradicts \refclaim{cl-all2}.
\end{subproof}

We are now ready to finish the proof.  Let $C_2=u_1\ldots u_t$, where the odd-indexed vertices have list of size $2a$.
Let $X$ denote the set of vertices of $C_2$ whose index is $2$ or $3$ modulo $4$, and let $Y$ denote the set of vertices whose index is $1$ modulo $4$.
Let $\psi$ be an $(L:a)$-coloring of $G[X]$ such that $\psi(u_i)\cap L(u_{i-1})=\emptyset$ for every $i$ such that $i\equiv 2\pmod 4$.
Let $L'$ be the list assignment for $G-(X\cup Y)$ obtained from $L$ by choosing $L'(v)$ as a $2a$-element subset of $L(v)\setminus \psi(x)$ for every vertex $v\in V(G')$ with
a neighbor $x\in X$.  If $t$ is divisible by $4$, then $G'$ has no flaw by \refclaim{cl-dist3} and \refclaim{cl-sp}.  Otherwise $t\equiv 2\pmod 4$, and by \refclaim{cl-dist3} and
\refclaim{cl-sp} $G'$ only has the flaw $uv$ such that the cycle $u_tu_1u_2uv$ bounds a $5$-face.  This flaw is at distance at least three from any other vertex with list of size $2a$
by \refclaim{cl-dist4} and \refclaim{cl-enopath}.
In either case, the graph $G'$ is $(L':a)$-colorable by the minimality of $G$.  The $(L':a)$-coloring of $G'$ extends to an $(L:a)$-coloring of $G-Y$ by coloring $X$ according to $\psi$.
This coloring further extends to an $(L:a)$-coloring of $G$, since all vertices of $Y$ have degree two and by the choice of $\psi$.
This is a contradiction.
\end{proof}

\section{Strong hyperbolicity}\label{sec-hyper}

We are now ready to prove Theorem~\ref{thm-postrong}.

\begin{proof}[Proof of Theorem~\ref{thm-postrong}]
We follow the proof of Theorem 1.8 in~\cite{postle3crit}.  The only major changes (other than trivial modifications such as replacing all statements
of form $|L(v)|\ge k$ by $|L(v)|\ge ka$, all $L$-colorings by $(L:a)$-colorings, etc.) are as follows.
An analogue of \cite[Lemma~2.9]{postle3crit} is implied directly by Corollary~\ref{cor-distflaws},
so we do not need to prove the analogue of \cite[Theorem 1.17]{postle3crit}; all the other applications of
\cite[Theorem 1.17]{postle3crit} can be replaced by applications of Corollary~\ref{cor-distflaws} as well.
Instead of \cite[Theorem~2.12]{postle3crit}, Theorem~\ref{thm-cyl} is used.  In the statement of \cite[Claim~4.11]{postle3crit},
we replace $A(v)\subseteq A(u)$ by $|A(v)\setminus A(u)|<a$.
We ignore all the claims based on \cite[Claim~4.11]{postle3crit}, which are actually not needed, except for the following one.
In the proof of \cite[Claim~4.17]{postle3crit}, we observe that
by the modified version of \cite[Claim~4.11]{postle3crit}, we can choose $\phi(p_5)$ as an $a$-element subset of $A(p_5)\cap A(p_4)$,
and then since $|A(p_3)|=|A(p_4)|=3a$, we can choose $\phi(p_3)$ as an $a$-element subset of $A(p_3)\setminus (A(p_4)\setminus \phi(p_5))$.
\end{proof}

A more detailed argument can be found in the appendix.

\bibliographystyle{siam}
\bibliography{../data}

\begin{thebibliography}{10}

\bibitem{dvokawalg}
{\sc Z.~Dvo{\v{r}}{\'a}k and K.~Kawarabayashi}, {\em List-coloring embedded
  graphs}, in Proceedings of the Twenty-Fourth Annual {ACM-SIAM} {S}ymposium on
  {D}iscrete {A}lgorithms, {SODA} 2013, {N}ew {O}rleans, {L}ouisiana, {USA},
  {J}anuary 6-8, 2013, SIAM, 2013, pp.~1004--1012.

\bibitem{dmnich}
{\sc Z.~Dvo{\v{r}}{\'a}k and M.~Mnich}, {\em Large independent sets in
  triangle-free planar graphs}, in Algorithms-ESA 2014, Springer, 2014,
  pp.~346--357.

\bibitem{dk}
{\sc Z.~Dvo\v{r}\'ak and K.~Kawarabayashi}, {\em {Choosability of planar graphs
  of girth 5}}, ArXiv, 1109.2976 (2011).

\bibitem{trfree3}
{\sc Z.~Dvo\v{r}\'ak, D.~Kr\'al', and R.~Thomas}, {\em Three-coloring
  triangle-free graphs on surfaces {III}. {G}raphs of girth five}, ArXiv,
  1402.4710 (2014).

\bibitem{trfree4}
\leavevmode\vrule height 2pt depth -1.6pt width 23pt, {\em Three-coloring
  triangle-free graphs on surfaces {IV}. {B}ounding face sizes of $4$-critical
  graphs}, ArXiv, 1404.6356v3 (2015).

\bibitem{trfree7}
{\sc Z.~Dvo\v{r}\'ak, D.~Kr\'al', and R.~Thomas}, {\em Three-coloring
  triangle-free graphs on surfaces {VII}. {A} linear-time algorithm}, ArXiv,
  1601.01197 (2016).

\bibitem{cylgen-part3}
{\sc Z.~Dvo\v{r}\'ak and B.~Lidick\'y}, {\em Fine structure of $4$-critical
  triangle-free graphs {III}. {G}eneral surfaces}, SIAM J. Discrete Math., 32
  (2018), pp.~94--105.

\bibitem{frpltr}
{\sc Z.~Dvo\v{r}\'ak, J.-S. Sereni, and J.~Volec}, {\em Fractional coloring of
  triangle-free planar graphs}, Electronic Journal of Combinatorics, 22 (2015),
  p.~P4.11.

\bibitem{garey1979computers}
{\sc M.~Garey and D.~Johnson}, {\em {Computers and Intractability: A Guide to
  the Theory of {NP}-completeness}}, WH Freeman \& Co. New York, NY, USA, 1979.

\bibitem{grotzsch1959}
{\sc H.~Gr{\"o}tzsch}, {\em Ein {D}reifarbensatz f\"{u}r dreikreisfreie {N}etze
  auf der {K}ugel}, Math.-Natur. Reihe, 8 (1959), pp.~109--120.

\bibitem{Jon84}
{\sc K.~Jones}, {\em Independence in graphs with maximum degree four}, J.
  Combin. Theory Ser. B, 37 (1984), pp.~254--269.

\bibitem{lukethe}
{\sc L.~Postle}, {\em 5-List-Coloring Graphs on Surfaces}, PhD thesis, Georgia
  Institute of Technology, 2012.

\bibitem{postle3crit}
{\sc L.~Postle}, {\em 3-list coloring graphs of girth at least five on
  surfaces}, arXiv, 1710.06898 (2017).

\bibitem{pothom}
{\sc L.~Postle and R.~Thomas}, {\em A {L}inear {U}pper {B}ound for 6-{C}ritical
  {G}raphs on {S}urfaces}.
\newblock Manuscript.

\bibitem{PosThoHyperb}
\leavevmode\vrule height 2pt depth -1.6pt width 23pt, {\em Hyperbolic families
  and coloring graphs on surfaces}, arXiv, 1609.06749 (2016).

\bibitem{thomassen1995-34}
{\sc C.~Thomassen}, {\em 3-list-coloring planar graphs of girth 5}, J. Combin.
  Theory, Ser.~B, 64 (1995), pp.~101--107.

\bibitem{thomassen-surf}
\leavevmode\vrule height 2pt depth -1.6pt width 23pt, {\em The chromatic number
  of a graph of girth 5 on a fixed surface}, J. Combin. Theory, Ser.~B, 87
  (2003), pp.~38--71.

\bibitem{voigt1995}
{\sc M.~Voigt}, {\em A not 3-choosable planar graph without 3-cycles}, Discrete
  Math., 146 (1995), pp.~325--328.

\end{thebibliography}

\section*{Appendix}

It follows from the hyperbolicity theory~\cite{PosThoHyperb} that to establish Theorem~\ref{thm-postrong}, one only needs
to prove the following result, analogous to Theorem~1.8 of~\cite{postle3crit}.

\begin{theorem}\label{thm-hypcyl}
Let $G$ be a plane graph of girth at least five, let $a$ be a positive integer, let $L$ be an assignment of lists of size $3a$
to vertices of $G$, and let $C_1$ and $C_2$ be cycles bounding distinct faces of $G$.
If $G$ is $(a,L,C_1\cup C_2)$-critical, then $|V(G)|\le 89(|C_1|+|C_2|)$.
\end{theorem}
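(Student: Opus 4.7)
The plan is to follow the proof of Theorem~1.8 in \cite{postle3crit} essentially verbatim, making the substitutions sketched in the proof of Theorem~\ref{thm-postrong} above. Postle's argument chooses a minimum $(a,L,C_1\cup C_2)$-critical counterexample, proves a long series of reducibility claims constraining its structure near the two distinguished faces, and then derives a contradiction via a discharging argument that directly yields the bound $89(|C_1|+|C_2|)$. The first step is a purely cosmetic rewrite: every ``$L$-coloring'' becomes an ``$(L:a)$-coloring'', every list size hypothesis ``$|L(v)|\ge k$'' becomes ``$|L(v)|\ge ka$'', and every ``color'' assigned to a vertex becomes an ``$a$-element subset of its list''. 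With this substitution, the reducibility claims that manipulate list sizes, cut along short cycles, and feed the discharging all carry through unchanged, since their correctness depends only on the comparison of list sizes with degrees and with the sizes of removed colors.

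Next I would deal with the two nontrivial external inputs to Postle's proof. Postle relies on \cite[Theorem~1.17]{postle3crit} (a one-face criticality bound for graphs with some additional widely separated vertices with smaller lists) and \cite[Theorem~2.12]{postle3crit} (the two-face analogue that is essentially the core of his hyperbolicity proof). In our setting, every use of the first is replaced by Corollary~\ref{cor-distflaws}: its statement directly supplies the analogue of \cite[Lemma~2.9]{postle3crit}, so no separate fractional version of \cite[Theorem~1.17]{postle3crit} is needed, and the remaining appeals in \cite{postle3crit} can likewise be rerouted through Corollary~\ref{cor-distflaws}. Every use of \cite[Theorem~2.12]{postle3crit} is replaced by our Theorem~\ref{thm-cyl}, whose hypotheses are arranged precisely to match.

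The only piece of genuinely new work is the fractional strengthening of \cite[Claim~4.11]{postle3crit}. In the list setting that claim asserts $A(v)\subseteq A(u)$ for certain adjacent vertices $u,v$; the natural fractional analogue is $|A(v)\setminus A(u)|<a$, obtained by the same argument. Most of the consequences Postle draws from \cite[Claim~4.11]{postle3crit} are not actually needed further downstream and can simply be discarded. The one place where we must preserve the consequence is inside \cite[Claim~4.17]{postle3crit}, when extending a partial $(L{:}a)$-coloring along a path $p_3p_4p_5$: we choose $\varphi(p_5)$ as an $a$-element subset of $A(p_5)\cap A(p_4)$, which exists because $|A(p_5)\setminus A(p_4)|<a$ and $|A(p_5)|\ge 2a$, and then since $|A(p_3)|=|A(p_4)|=3a$ we can take $\varphi(p_3)$ to be an $a$-subset of $A(p_3)\setminus(A(p_4)\setminus \varphi(p_5))$, a set of size at least $a$.

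The main obstacle will not be conceptual but bookkeeping: verifying that in every one of Postle's many list-size estimates the bound left on the set of available colors after removing forbidden ones is still at least $a$ (rather than just $1$), so that an $a$-element choice remains possible. Because the substitutions scale all list sizes and all color counts uniformly by $a$, and because Corollary~\ref{cor-distflaws} and Theorem~\ref{thm-cyl} provide exact fractional replacements for the two external inputs, these estimates all work out. Once this is verified and the modified Claim~4.11/4.17 step is in place, the discharging argument of \cite{postle3crit} delivers the bound $|V(G)|\le 89(|C_1|+|C_2|)$ without further change.
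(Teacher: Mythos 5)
Your proposal is the same strategy as the paper's: rewrite Postle's argument from \cite{postle3crit} with the uniform scaling by $a$, replace his Theorem~1.17 by Corollary~\ref{cor-distflaws}, replace his Theorem~2.12 by Theorem~\ref{thm-cyl}, weaken Claim~4.11 to $|A(v)\setminus A(u)|<a$, and patch Claim~4.17 accordingly. That is exactly what the paper does, and the Appendix carries it out in full rather than by reference.

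There is, however, one concrete inaccuracy. You assert that once these substitutions are made, Postle's discharging ``delivers the bound $|V(G)|\le 89(|C_1|+|C_2|)$ without further change.'' That is not quite right. The paper explicitly notes that because Theorem~\ref{thm-cyl} has the same form as Corollary~\ref{cor-distflaws} (one does not have to separately handle one versus two components of the precolored subgraph), the definition of the deficiency $\defi(T)$ can be simplified by dropping Postle's $10(c(S)-c(G))$ term, and this simplification ``translates to a slightly better bound in Theorem~\ref{thm-hypcyl} compared to \cite[Theorem~1.8]{postle3crit}.'' A literal transcription of Postle's argument would therefore yield a weaker constant than $89$; to get the theorem as stated you must also make this change to the potential function (the paper works with $\defi(T)=3e(T)-5v(T)$, $\alpha=3/8$, $\varepsilon=1/88$, giving $(1+\varepsilon)/\varepsilon=89$). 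If your goal were only ``some constant'' this would be harmless, but as a proof of the theorem in the form given, the step from Postle's numerics to $89$ is missing.
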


This appendix is devoted to the proof of this theorem. Note that we omit most of the discussion of the
ideas of the argument, as well as straightforward calculations; if necessary, the details can be found in~\cite{postle3crit}.
The fact that (unlike \cite[Theorem~2.12]{postle3crit}) our Theorem~\ref{thm-cyl} has the same general form as Corollary~\ref{cor-distflaws}
means we do not have to distinguish whether the precolored subgraph has one or two components, which streamlines some parts
of the argument; in particular, we can simplify the definition of $\defi(T)$, omitting the term $10(c(S) - c(G))$.
This translates to a slightly better bound in Theorem~\ref{thm-hypcyl} compared to \cite[Theorem~1.8]{postle3crit}.

We need the following standard lemma on critical graphs.
\begin{lemma}\label{lemma-sgcrit}
Let $S$ be a proper subgraph of a graph $G$, let $L$ be a list assignment for $G$, let $a$ be a positive integer,
and let $A$ and $B$ be subgraphs of $G$ such that $G=A\cup B$, $S\subseteq A$, and $B\neq A\cap B$.
If $G$ is $(a,L,S)$-critical, then $B$ is $(a,L,A\cap B)$-critical.
\end{lemma}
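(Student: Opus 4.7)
The plan is to verify the definition of $(a,L,A\cap B)$-criticality directly. Let $H'$ be an arbitrary proper subgraph of $B$ containing $A\cap B$. Define $H=A\cup H'$ and try to use the criticality of $G$ with respect to $H$ as the witness subgraph.

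First I would check that $H$ is a proper subgraph of $G$ containing $S$. Containment is immediate since $S\subseteq A\subseteq H$. For properness, fix any vertex or edge $x\in V(B)\cup E(B)$ that lies in $B$ but not in $H'$ (such $x$ exists because $H'\subsetneq B$). Since $H'\supseteq A\cap B$, such an $x$ cannot lie in $A$; otherwise $x\in A\cap B\subseteq H'$, a contradiction. Hence $x\notin A\cup H'=H$, while $x\in B\subseteq G$, so $H\neq G$.

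Next, since $G$ is $(a,L,S)$-critical, applied to the proper subgraph $H$ of $G$ there exists an $(L:a)$-coloring $\psi_S$ of $S$ which extends to an $(L:a)$-coloring $\varphi$ of $H$, but does not extend to an $(L:a)$-coloring of $G$. Let $\psi$ be the restriction of $\varphi$ to $A\cap B$. Then $\psi$ is an $(L:a)$-coloring of $A\cap B$ that extends to $H'$, namely via the restriction $\varphi|_{H'}$.

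Finally I would show that $\psi$ does not extend to an $(L:a)$-coloring of $B$. Suppose for contradiction that $\varphi_B$ is such an extension. Then $\varphi_B$ agrees with $\varphi|_A$ on their common domain $A\cap B$, so $\varphi|_A\cup\varphi_B$ is a well-defined $(L:a)$-coloring of $A\cup B=G$ which extends $\varphi|_S=\psi_S$, contradicting the fact that $\psi_S$ does not extend to $G$. Thus $H'$ witnesses the required property, and since $H'$ was an arbitrary proper subgraph of $B$ containing $A\cap B$, this establishes that $B$ is $(a,L,A\cap B)$-critical. There is no real obstacle here; the only point requiring care is the properness argument in the first step, which relies essentially on the hypothesis $H'\supseteq A\cap B$ to force any witness of $H'\neq B$ to lie outside of $A$.
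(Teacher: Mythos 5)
Your proof is correct and follows essentially the same approach as the paper's: both take an arbitrary proper subgraph $H'$ (the paper calls it $B'$) of $B$ containing $A\cap B$, pass to the subgraph $A\cup H'$ of $G$, invoke $(a,L,S)$-criticality of $G$, and restrict the resulting coloring to $A\cap B$. Your write-up is somewhat more careful than the paper's, spelling out both the properness of $A\cup H'$ in $G$ and the gluing of $\varphi|_A$ with a hypothetical extension $\varphi_B$, both of which the paper leaves implicit.
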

\begin{proof}
Consider any subgraph $B'\subsetneq B$ such that $A\cap B\subseteq B'$.  Since $G$ is $(a,L,S)$-critical, there exists an $(L:a)$-coloring $\psi$
of $S$ which extends to an $(L:a)$-coloring $\varphi$ of $A\cup B'$ but not to an $(L:a)$-coloring of $G$.  Then the restriction of $\varphi$ to $A\cap B$
extends to an $(L:a)$-coloring of $B'$ but not to an $(L:a)$-coloring of $B$.  Since this holds for any choice of $B'$, the graph $B$ is $(a,L,A\cap B)$-critical.
\end{proof}

A quadruple $T=(a,G,S,L)$ is a \emph{canvas} if $a$ is a positive integer, $G$ is a plane graph of girth
at least five and $S$ is a subgraph of $G$, and $L$ is a list assignment for $G$ such that $|L(v)|\ge 3a$ for every $v\in V(G)\setminus V(S)$.
The canvas is \emph{critical} if $G$ is $(a,L,S)$-critical (and in particular, this implies that $S\neq G$ and $S$ is $(L:a)$-colorable).  For a subgraph $H\subseteq G$ such that $S\subseteq H$,
the \emph{subcanvas} of $T$ induced by $H$ is the canvas $T|H=(a,H,S,L)$, and the \emph{supercanvas} of $T$ induced by $H$ is the canvas $T/H=(a,G,H,L)$.

\begin{observation}\label{obs-critsc}
Let $T=(a,G,S,L)$ be a canvas.  If some $(L:a)$-coloring of $S$ does not extend to an $(L:a)$-coloring of $G$, then $T$ contains a critical subcanvas.
\end{observation}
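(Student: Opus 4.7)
The plan is to obtain the critical subcanvas by a straightforward minimality argument. Let $\mathcal{F}$ denote the family of subgraphs $H$ of $G$ with $S \subseteq H$ such that some $(L:a)$-coloring of $S$ does not extend to an $(L:a)$-coloring of $H$. By hypothesis, $G \in \mathcal{F}$, so $\mathcal{F}$ is nonempty. I would choose $H \in \mathcal{F}$ minimizing $|V(H)| + |E(H)|$, and then argue that $T|H = (a, H, S, L)$ is the desired critical subcanvas.

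Observing first that $T|H$ is indeed a canvas is immediate: $H$ inherits planarity and girth at least five from $G$, and every vertex in $V(H) \setminus V(S)$ is a vertex in $V(G) \setminus V(S)$ and hence has list of size at least $3a$. To verify that $H$ is $(a, L, S)$-critical, I would take an arbitrary proper subgraph $H'$ of $H$ with $S \subseteq H'$. By the minimality of $H$ in $\mathcal{F}$, we have $H' \notin \mathcal{F}$, which means every $(L:a)$-coloring of $S$ extends to an $(L:a)$-coloring of $H'$. Since $H \in \mathcal{F}$, there exists an $(L:a)$-coloring $\psi$ of $S$ that does not extend to an $(L:a)$-coloring of $H$; this same $\psi$ extends to $H'$ by the previous sentence, so $\psi$ witnesses the criticality condition for $H'$.

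There is no real obstacle here: the argument is the standard ``take a minimal counterexample'' construction and works verbatim, so the observation should be presented as a short immediate consequence of the definitions. The only mild point worth flagging is that the minimum is taken over the well-ordered quantity $|V(H)| + |E(H)|$ (which is a nonnegative integer), so a minimal element of $\mathcal{F}$ exists, and any proper subgraph $H' \subsetneq H$ with $S \subseteq H'$ strictly decreases this quantity, which is precisely what the minimality argument needs.
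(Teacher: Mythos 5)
Your proof is correct and is the standard minimal-counterexample argument; the paper states this as an observation without proof, treating it as immediate from the definition of $(a,L,S)$-criticality, which is precisely what your argument makes explicit. One small point worth noting for completeness: the same minimality argument also yields $S \neq H$ (since any $(L:a)$-coloring of $S$ trivially extends to $S$ itself), which is required for $T|H$ to qualify as a critical canvas under the paper's conventions.
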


Lemma~\ref{lemma-sgcrit} (applied with $A=G-(E(H)\setminus E(S))-(V(H)\setminus V(S))$ and $B=H$, and with $A=H$ and $B=G$) implies the following claims.

\begin{corollary}\label{cor-subsupccrit}
Let $T=(a,G,S,L)$ be a critical canvas and let $H$ be a subgraph of $G$ such that $S\subseteq H$.
\begin{itemize}
\item If $S\neq H$ and all edges of $G$ incident with $V(H)\setminus V(S)$ belong to $H$, then the subcanvas $T|H$ is critical.
\item If $H\neq G$, then the supercanvas $T/H$ is critical.
\end{itemize}
\end{corollary}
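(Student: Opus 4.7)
The plan is to obtain both assertions directly from Lemma~\ref{lemma-sgcrit} by exhibiting an appropriate decomposition $G = A \cup B$ in each case and reading off the conclusion.

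For the supercanvas statement the choice is immediate: set $A = H$ and $B = G$. Then $A \cup B = G$ trivially, $S \subseteq H = A$ by hypothesis, $A \cap B = H$, and the nondegeneracy condition $B \neq A \cap B$ is precisely the assumption $H \neq G$. Lemma~\ref{lemma-sgcrit} therefore gives that $B = G$ is $(a, L, H)$-critical, which is the definition of $T/H = (a, G, H, L)$ being critical.

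For the subcanvas statement, the first paragraph of the excerpt already telegraphs the right decomposition: let $B = H$ and let $A$ be the subgraph of $G$ obtained by deleting the vertex set $V(H) \setminus V(S)$ together with the edge set $E(H) \setminus E(S)$. Then $S \subseteq A$ and the condition $B \neq A \cap B$ is just the hypothesis $S \neq H$. The two identities to verify are $A \cup B = G$ and $A \cap B = S$. For the first, any edge $e \in E(G)$ either lies in $E(H) \setminus E(S) \subseteq E(B)$ or else survives in $A$: the only way $e$ could be lost is if $e$ is incident with $V(H) \setminus V(S)$, but the hypothesis that all such edges lie in $H$ forces $e \in E(H) \setminus E(S) \subseteq E(B)$ in that case as well. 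For the second, a vertex of $A \cap B$ must lie in $V(H)$ and not in $V(H) \setminus V(S)$, hence in $V(S)$; an edge of $A \cap B$ lies in $E(H)$ but not in $E(H) \setminus E(S)$, hence in $E(S)$. Thus $A \cap B = S$, and Lemma~\ref{lemma-sgcrit} applied to $A$ and $B$ yields that $B = H$ is $(a, L, S)$-critical, i.e., $T|H$ is critical.

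No real obstacle arises: the whole argument is a set-theoretic unpacking of $G = A \cup B$ in each case, and the only mild subtlety is in the subcanvas direction, where the hypothesis on edges incident with $V(H) \setminus V(S)$ is exactly what prevents spurious edges from escaping $A$ and is exactly what forces $A \cap B = S$ rather than some proper supergraph of $S$ inside $H$.
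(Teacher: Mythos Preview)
Your proposal is correct and follows exactly the approach indicated in the paper: the paper applies Lemma~\ref{lemma-sgcrit} with $A=H$, $B=G$ for the supercanvas statement and with $A=G-(E(H)\setminus E(S))-(V(H)\setminus V(S))$, $B=H$ for the subcanvas statement, and you have chosen the same decompositions and carefully verified the required identities $A\cup B=G$ and $A\cap B=S$.
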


Let $T=(a,G,S,L)$ be a canvas and let $P=p_0p_1\ldots p_k$ be a path in $G-V(S)$.
We say $P$ is a \emph{neighboring $k$-path} of $S$ if every vertex of $P$ has a neighbor in $S$,
a \emph{semi-neighboring $3$-path} of $S$ if $k=3$ and $p_0$, $p_1$, and $p_3$ have a neighbor in $S$, and
a \emph{semi-neighboring $5$-path} of $S$ if $k=5$ and $p_0$, $p_1$, $p_4$, and $p_5$ have a neighbor in $S$.
If a vertex $v\in V(G)\setminus V(S)$ has three neighbors $u_1,u_2,u_3\in V(G)\setminus V(S)$, and each of these neighbors
as well as $v$ itself has a neighbor in $S$, then $G[\{v,u_1,u_2,u_3\}]$ is a \emph{neighboring claw} of $S$.

We now prove a combined analogue of Lemmas~2.9 and 2.13 of~\cite{postle3crit}.
\begin{lemma}\label{lemma-neipaths}
If $T = (a,G, S, L)$ is a critical canvas and $S$ has at most two connected components, then $G$ contains one of the
following:
\begin{itemize}
\item[(a)] an edge not in $S$ with both ends in $V(S)$, or
\item[(b)] a vertex not in $V(S)$ with at least two neighbors in $S$, or
\item[(c)] a neighboring 2-path of $S$, or
\item[(d)] a semi-neighboring 3-path of $S$, or
\item[(e)] a semi-neighboring 5-path of $S$.
\end{itemize}
\end{lemma}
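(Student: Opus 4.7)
The approach is to argue by contradiction: assuming none of (a)--(e) holds in $G$, I will show that every $(L:a)$-coloring $\varphi$ of $S$ extends to an $(L:a)$-coloring of $G$, contradicting the criticality of the canvas $T$. The extension will be produced by applying Theorem~\ref{thm-cyl} to the graph $G' := G - V(S)$ with a suitably reduced list assignment.

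Given an $(L:a)$-coloring $\varphi$ of $S$, define $L'$ on $G'$ by $L'(v) = L(v)$ if $v$ has no neighbor in $V(S)$, and otherwise let $L'(v)$ be a $2a$-element subset of $L(v) \setminus \varphi(s)$, where $s$ is the unique neighbor of $v$ in $S$; uniqueness comes from the failure of (b), so that $|L(v) \setminus \varphi(s)| \ge 3a - a = 2a$. Because (a) fails, every edge of $G$ between vertices of $V(S)$ lies in $E(S)$, so $\varphi$ is also proper on such edges of $G$; hence any $(L':a)$-coloring of $G'$ combines with $\varphi$ to yield an $(L:a)$-coloring of $G$. The vertices with $|L'(v)| = 2a$ are exactly those of $V(G') \cap N_G(V(S))$, and by planarity they all lie on the boundaries of the (at most two) faces $f_1, f_2$ of $G'$ that contain the components of $S$.

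It remains to verify the distance hypotheses of Theorem~\ref{thm-cyl}. A flaw $uv$ in $G'$ (with respect to $L'$) is an edge both of whose endpoints have neighbors in $S$. If some other vertex $w$ with $|L'(w)| = 2a$ lies at distance at most two from $\{u,v\}$, then a shortest path from $\{u,v\}$ to $w$, attached to $uv$, yields either a neighboring 2-path of $S$ (when the intermediate vertex, if any, has a neighbor in $S$, giving (c)) or a semi-neighboring 3-path (giving (d)). Similarly, if two distinct flaws $uv$ and $u'v'$ lie at distance at most three, a case analysis on a shortest path between them, splitting on which intermediate vertices have neighbors in $S$, produces one of (c), (d), or (e); in particular the length-$3$ case with no $S$-neighboring intermediate vertices is precisely (e). The main obstacle is this case analysis, especially keeping track of degenerate situations where the connecting path passes through the other endpoint of one of the flaws, in which case one reduces to a shorter distance case (and to the case where the two flaws share a vertex, which directly forces (c)).

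Once the hypotheses are verified, Theorem~\ref{thm-cyl} (taking $f_1 = f_2$ when $S$ is connected) produces an $(L':a)$-coloring of $G'$, which combines with $\varphi$ to extend $\varphi$ to an $(L:a)$-coloring of $G$. Since $\varphi$ was an arbitrary $(L:a)$-coloring of $S$, this contradicts the criticality of $T$, and therefore at least one of (a)--(e) must hold in $G$.
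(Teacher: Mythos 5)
Your proposal is correct and follows essentially the same approach as the paper: delete $V(S)$, reduce the lists of neighbors of $S$ to $2a$-element subsets avoiding the colors on $S$, observe that the small-list vertices lie on at most two faces of $G'$, and apply Theorem~\ref{thm-cyl} to extend any coloring that meets the distance hypotheses — so a failure of those hypotheses must yield one of (c)--(e). Two tiny points worth tightening: when $v$ has no neighbor in $S$ you should take a $3a$-element subset of $L(v)$ (the canvas only guarantees $|L(v)|\ge 3a$, not equality, and Theorem~\ref{thm-cyl} requires list sizes in $\{2a,3a\}$), and for the one-component case it is cleaner to invoke Corollary~\ref{cor-distflaws} rather than reading Theorem~\ref{thm-cyl} with $f_1=f_2$.
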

\begin{proof}
We can assume $S$ is an induced subgraph of $G$ and every vertex in $V(G)\setminus V(S)$ has at most one neighbor
in $S$, as otherwise (a) or (b) holds.  Let $\psi$ be an arbitrary $(L:a)$-coloring of $S$ that does not
extend to an $(L:a)$-coloring of $G$.  Let $G'=G-V(S)$ and let $L'$ be the list assignment for $G'$
such that $L'(v)$ is a $2a$-element subset of $L(v)\setminus\psi(x)$ for every $v\in V(G')$ with a neighbor $x\in V(S)$,
and $L'(v)$ is a $3a$-element subset of $L(v)$ for any other vertex $v\in V(G')$.  Note that $G'$ is not $(L':a)$-colorable.
Since $S$ has at most two connected components, all vertices of $G'$ with list of size $2a$ are incident with at most two faces of $G'$.
By Theorem~\ref{thm-cyl}, $G'$ contains either a flaw at distance at most two from another vertex with list of size $2a$,
corresponding to the outcome (c) or (d), or two flaws at distance three from each other, corresponding to the outcome (e).
\end{proof}

Let $T = (a,G, S, L)$ be a canvas. Let $P = p_0p_1p_2$ be a neighboring 2-path of $S$ such that for
each $i\in \{0, 1, 2\}$, $p_i$ has a unique neighbor $u_i$ in $S$. Let $H = S + P + \{p_0u_0, p_1u_1, p_2u_2\}$.
We say $T/H$ is obtained from $T$ by \emph{relaxing} $P$ and that $T/H$ is a \emph{relaxation} of $T$. We define $T$ to be a $0$-relaxation of itself.
For a positive integer $k$, we say a supercanvas $T'$ of $T$ is a \emph{$k$-relaxation} of $T$ if $T'$ is a relaxation of a $(k-1)$-relaxation of $T$.

Let us now prove a combined analogue of Lemmas~2.11 and 2.14 of~\cite{postle3crit}.
\begin{lemma}\label{lemma-neiparel}
If $T = (a,G, S, L)$ is a critical canvas and $S$ has at most two connected components, then there exists one of the
following:
\begin{itemize}
\item[(1)] an edge of $G$ not in $S$ with both ends in $V(S)$, or
\item[(2)] a vertex of $G$ not in $V(S)$ with at least two neighbors in $S$, or
\item[(3)] a neighboring claw of $S$, or
\item[(4)] a $(\le\!2)$-relaxation $(a,G,S',L)$ of $T$ and a semi-neighboring 3-path of $S'$, or
\item[(5)] a $(\le\!2)$-relaxation $(a,G,S',L)$ of $T$ and a semi-neighboring 5-path of $S'$.
\end{itemize}
\end{lemma}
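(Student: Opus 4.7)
The plan is to iteratively apply Lemma~\ref{lemma-neipaths}, relaxing neighboring $2$-paths as they appear, until one of outcomes (1)--(5) of the present lemma is reached. Let $T_0 = T$. Applying Lemma~\ref{lemma-neipaths} to $T_0$, outcomes (a), (b), (d), (e) directly yield cases (1), (2), (4), (5) with zero relaxations. If outcome (c) holds, let $P = p_0p_1p_2$ be the neighboring $2$-path; if some $p_i$ has two neighbors in $S$, outcome (b) also applies, giving case (2). Otherwise each $p_i$ has a unique neighbor $u_i \in V(S)$, so I relax: set $H_1 = S + P + \{p_0u_0, p_1u_1, p_2u_2\}$ and $T_1 = T/H_1$. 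By Corollary~\ref{cor-subsupccrit}, $T_1$ is critical (provided $H_1 \neq G$; if $H_1 = G$ the conclusion can be verified directly), and $H_1$ has at most two components, since adding a path with attachments to $V(S)$ cannot increase the number of connected components.

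Next I apply Lemma~\ref{lemma-neipaths} to $T_1$. Outcomes (d), (e) give cases (4), (5) with one relaxation, and outcome (c) triggers a second relaxation to form $T_2$. For outcomes (a), (b) at $T_1$: since these failed for $T_0$, any new edge or witness vertex must involve a vertex of $P$. The girth-$5$ hypothesis forbids a vertex outside $V(S_1)$ from having two neighbors on $P$ (which would form a $3$- or $4$-cycle), so a ``(b)-witness'' vertex $v$ has exactly one neighbor $p_i \in V(P)$ and at least one neighbor in $V(S)$. If $v$ has two neighbors in $V(S)$, case (2) of the present lemma is recovered; if only one, say $u \in V(S)$, the configuration around $v$, $p_i$, $u_i$, and $u$ is analyzed to produce either a neighboring claw (case (3)) or a semi-neighboring $3$-path of $S_1$ (case (4)). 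Outcome (a) at $T_1$ is handled similarly, using the girth constraint to pin down the endpoints.

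If $T_2$ is reached, I apply Lemma~\ref{lemma-neipaths} a final time. Outcomes (d), (e) complete the proof with two relaxations, so the substantive remaining work is to convert outcomes (a), (b), or another (c) at $T_2$ into a neighboring claw. Here, $V(S_2) \setminus V(S)$ consists of the vertices of the two prior neighboring $2$-paths $P$ and $Q$, each attached to $V(S)$ via three edges, and any further attachment respects girth $5$; the expectation is that this forces a vertex $v$ to have three distinct neighbors outside $V(S)$ (drawn from $V(P) \cup V(Q)$ together with the putative third $2$-path), each of which has a neighbor in $V(S)$ inherited from the relaxation edges, exhibiting the claw. The main obstacle is precisely this final case analysis: enumerating how the third ``neighboring'' structure can attach to $P \cup Q$ under the girth-$5$ constraint, and checking in each case that a neighboring claw is present so that two relaxations suffice.
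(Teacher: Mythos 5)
There is a genuine gap. Your iterative relaxation scheme does not control the number of relaxations: if $G$ contains three or more pairwise disjoint, pairwise far-apart neighboring $2$-paths of $S$ (each attached directly to $S$), then outcome~(c) of Lemma~\ref{lemma-neipaths} simply recurs at $T_1$ and again at $T_2$ with a fresh $2$-path that has nothing to do with the previously relaxed paths $P$ and $Q$. Your expectation that ``another (c) at $T_2$'' must attach to $P\cup Q$ and hence exhibit a neighboring claw is false, and the girth constraint gives you nothing here because the new $2$-path need not be anywhere near the old ones. Since the lemma must produce a $(\le\!2)$-relaxation, you cannot afford a third relaxation, so the iteration has no way to terminate correctly. (Relatedly, your treatment of outcomes (a) and (b) at $T_1$ and $T_2$ is left as a sketch, but that part could plausibly be filled in; the unbounded recurrence of (c) is the fatal issue.)

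The paper sidesteps this by performing a single \emph{bulk} relaxation: after ruling out (1), (2), (3) and the existence of a semi-neighboring $3$-path of $S$, it observes that $G[N]$ (where $N$ is the set of vertices with a neighbor in $S$) is a disjoint union of paths of length at most two, lets $R$ be \emph{all} vertices lying on length-$2$ paths of $G[N]$, and sets $H=G[V(S)\cup R]$. It applies Lemma~\ref{lemma-neipaths} just once, to $T/H$. The crux is then a locality argument: the witness semi-neighboring $3$- or $5$-path of $H$ can meet at most two of the length-$2$ components of $G[R]$ (using the observation that any edge $xy$ in $G-V(H)$ with $x$ adjacent to $R$ and $y$ adjacent to $V(H)$ forces both attachment points into the same length-$2$ path, on pain of yielding a semi-neighboring $3$- or $5$-path of $S$ directly). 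Relaxing only those at most two paths produces the required $(\le\!2)$-relaxation $S'$. That locality step, which bounds how many of the (possibly many) relaxed $2$-paths the witness can actually see, is exactly what your iterative approach is missing.
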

\begin{proof}
We can assume $S$ is an induced subgraph of $G$, every vertex in $V(G)\setminus V(S)$ has at most one neighbor
in $S$, and $G$ does not contain a semi-neighboring 3-path of $S$, and thus also no neighboring $3$-path of $S$.
Let $N\subseteq V(G)\setminus V(S)$ consist of all vertices with a neighbor in $S$; since $G$ does not contain
a neighboring $3$-path and has girth at least five, each component of $G[N]$ is a star.  If $G[N]$ contains a vertex of degree at least three,
then $G$ contains a neighboring claw of $S$ and (3) holds.  Hence, we can assume $G[N]$ is a union of paths of length at most two.
Let $R\subseteq N$ consist of vertices of the length-$2$ paths of $G[N]$.  By Lemma~\ref{lemma-neipaths}, we can assume $R$ is non-empty.
For each vertex $u\in R$, let $P(u)$ be the path of $G[R]$ that contains it.  Let $H=G[V(S)\cup R]$.

Consider any path $Q=xy$ in $G-V(H)$ such that $x$ has a neighbor $u\in R$ and $y$ has a neighbor $v\in V(H)$.
If $v\in V(S)$, then a semi-neighboring 3-path of $S$ is contained in $G[V(P(u))\cup\{x,y\}]$, and (4) holds.
If $v\in R$ and $P(v)\neq P(u)$, then a semi-neighboring 5-path of $S$ is contained in $G[V(P(u))\cup\{x,y\}\cup V(P(v))]$, and (5) holds.
Hence, suppose this is not the case, and thus for any such path $Q$, we have $v\in R$ and $P(u)=P(v)$.  Let us define $P(Q)=P(u)$.

Note that $H\neq G$, since the endvertices of the paths in $G[R]$ have degree at least three
in $G$ (since $G$ is $(a,L,S)$-critical and these vertices have lists of size at least $3a$) but degree two in $H$.
Corollary~\ref{cor-subsupccrit} implies $T/H$ is a critical canvas.  Note that $H$ has at most two connected components,
and thus we can apply Lemma~\ref{lemma-neipaths} to $T/H$.  Since $H$ is an induced subgraph of $G$, (a) does not occur.

Suppose (b) occurs, that is, a vertex $v\in V(G)\setminus V(H)$ has neighbors $u_1,u_2\in V(H)$.
Since every vertex in $V(G)\setminus V(S)$ has at most one neighbor in $S$ and $R$ is the union of some of the connected components of $G[N]$,
we conclude that $u_1,u_2\in R$, and since $G$ has girth at least five, we have $P(u_1)\neq P(u_2)$.  But then
$G$ contains a semi-neighboring 3-path of $S$ and (4) holds.

Suppose (c) occurs, and let $P=p_0p_1p_2$ be a neighboring 2-path of $H$.  For $i\in \{0, 1, 2\}$, let $u_i$ be a neighbor of $p_i$ in $H$.
Since $V(P)\cap R=\emptyset$, at least one of these neighbors belongs to $R$; by symmetry we can assume $\{u_0,u_1\}\cap R\neq\emptyset$.
As we argued earlier, this implies $u_0$ and $u_1$ belong to the same path $P(p_0p_1)$ in $G[R]$.  But then $u_1\in R$, and thus
also $u_1$ and $u_2$ belong to the same path $P(p_1p_2)$ in $G[R]$, and clearly $P(p_0p_1)=P(p_1p_2)$.  But since $G$ has girth at least
five, the vertices $u_0$, $u_1$, and $u_2$ cannot all belong to the same path of length three, which is a contradiction.
Hence, (c) does not occur.

Suppose (d) occurs, and let $P=p_0p_1p_2p_3$ be a semi-neighboring 3-path of $H$; for $i\in \{0, 1, 3\}$, let $u_i$ be a neighbor of $p_i$ in $H$.
Note that either $u_0,u_1\in V(S)$, or $u_0$ and $u_1$ are the endpoints of the path $P(p_0p_1)$ in $G[R]$.
Hence, $P$ is a semi-neighboring 3-path of $S'$ in a $(\le\!2)$-relaxation $(a,G,S',L)$ of $T$ obtained by relaxing at most two paths
$P(p_0p_1)$ and $P(u_3)$.  Hence, (4) holds.

Similarly, if (e) occurs, then a semi-neighboring 5-path $P=p_0\ldots p_5$ of $H$ is a semi-neighboring $5$-path of $S'$
in a $(\le\!2)$-relaxation $(a,G,S',L)$ of $T$ obtained by relaxing at most two paths $P(p_0p_1)$ and $P(p_4p_5)$.
\end{proof}

Let $T = (a,G, S, L)$ be a canvas.  We let $v(T)=|V(G) \setminus V(S)|$ and $e(T) = |E(G) \setminus E(S)|$.  
We define the \emph{deficiency} of the canvas $T$ as $\defi(T) = 3e(T) - 5v(T)$.
Let $q(T)=\sum_{v\in V(S)} \deg_{G-E(S)}(v)$
be the number of edges not in $E(S)$ with an end in $V(S)$, where the edges with both ends in $V(S)$ are counted twice.
Let $\alpha=3/8$ and $\varepsilon=1/88$.  Let $s(T)=\varepsilon v(T) + \alpha q(T)$, and let $d(T) = \defi(T)-s(T)$.
For a subgraph $H$ of $G$ containing $S$, let $q_T(H,S)=\sum_{v\in V(H)\setminus V(S)} \deg_{G-E(H)}(v)$ and let $d_T(T|H) = d(T|H) + \alpha q_T(H,S)$.
The following claims are established by straightforward calculations.

\begin{lemma}[{Postle~\cite[Lemma~3.2, Proposition 3.4, and Proposition 3.6]{postle3crit}}]\label{lemma-addit}
If $T=(a,G, S, L)$ is a canvas and $H$ is a subgraph of $G$ containing $S$, then
\begin{itemize}
\item $\defi(T) = \defi(T|H) + \defi(T/H)$,
\item $v(T) = v(T|H) + v(T/H)$,
\item $e(T) = e(T|H) + e(T/H)$,
\item $q(T) = q(T|H) + q(T/H) - q_T(H,S)\le q(T|H) + q(T/H)$,
\item $s(T) \le s(T|H) + s(T/H)$, and
\item $d(T) = d_T(T|H) + d(T/H) \ge d(T|H) + d(T/H)$.
\end{itemize}
\end{lemma}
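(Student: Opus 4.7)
The plan is to verify each of the six assertions directly from the definitions, since all are matters of careful counting and linear algebra. Items 2 and 3 follow immediately from the partitions $V(G)\setminus V(S) = (V(H)\setminus V(S)) \sqcup (V(G)\setminus V(H))$ and $E(G)\setminus E(S) = (E(H)\setminus E(S)) \sqcup (E(G)\setminus E(H))$, respectively; item 1 is then an immediate consequence via $\defi(T)=3e(T)-5v(T)$.

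The substantive step is the identity in item 4. I would verify $q(T)=q(T|H)+q(T/H)-q_T(H,S)$ by double-counting the contribution of each edge $e\in E(G)\setminus E(S)$ to both sides, classifying such edges according to (i) whether $e\in E(H)$ and (ii) for each endpoint, whether it lies in $V(S)$, in $V(H)\setminus V(S)$, or in $V(G)\setminus V(H)$. A check of the resulting cases shows the two sides agree in each: for example, an edge joining $V(S)$ to $V(H)\setminus V(S)$ that lies outside $E(H)$ contributes $1$ to $q(T)$, $0$ to $q(T|H)$, $2$ to $q(T/H)$ (both endpoints lie in $V(H)$), and $1$ to $q_T(H,S)$, balancing as $1=0+2-1$; the remaining cases are analogous. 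The inequality $q(T)\le q(T|H)+q(T/H)$ then follows because $q_T(H,S)\ge 0$ by definition.

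For items 5 and 6, substituting items 2 and 4 into $s(T)=\varepsilon v(T)+\alpha q(T)$ gives
\[s(T)=s(T|H)+s(T/H)-\alpha q_T(H,S),\]
which already implies item 5 since $\alpha q_T(H,S)\ge 0$; combining this equality with item 1 and the definition $d(T)=\defi(T)-s(T)$ yields
\[d(T)=d(T|H)+d(T/H)+\alpha q_T(H,S)=d_T(T|H)+d(T/H),\]
which is item 6. There is no conceptual obstacle here; the only place where care is needed is the case analysis for item 4, ensuring that edges with both endpoints in $V(S)$ but not in $E(S)$ are counted with the correct multiplicity two, and that the ``cut'' edges with one endpoint in $V(H)\setminus V(S)$ and the other in $V(G)\setminus V(H)$ produce exactly the $q_T(H,S)$ correction term rather than being double- or undercounted.
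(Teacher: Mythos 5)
Your proof is correct and is exactly the ``straightforward calculation'' the paper alludes to in citing Postle (the paper does not reproduce the proof). Items 1--3 and 5--6 are indeed immediate once the edge-by-edge identity in item 4 is verified, and your case analysis there --- in particular handling cut edges between $V(H)\setminus V(S)$ and $V(G)\setminus V(H)$ and the multiplicity-two counting of $V(S)$--$V(S)$ edges --- correctly accounts for the $q_T(H,S)$ correction term.
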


\begin{lemma}[{Postle~\cite[Proposition 3.7]{postle3crit}}]\label{lemma-small}
Let $T = (a, G, S, L)$ be a canvas.
\begin{itemize}
\item[(i)] If $G = S$, then $d(T) = 0$.
\item[(ii)] If $v(T) = 0$, then $d(T)=(3 - 2\alpha)e(T)$.
\item[(iii)] If $v(T) = 1$, then $d(T)=(3 - 2\alpha)e(T) - 5 + \alpha\deg(v) - \varepsilon$,
where $v$ is the unique vertex in $V(G)\setminus V(S)$.
\end{itemize}
\end{lemma}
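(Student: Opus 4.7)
The plan is to verify each of the three cases by directly unpacking the definitions of $v(T)$, $e(T)$, $\defi(T)$, $q(T)$, $s(T)$, and $d(T)$; no clever idea is required, since the statement is entirely about how the parameters relate when $T$ has almost no content outside $S$.

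For part (i), the assumption $G=S$ forces $V(G)\setminus V(S)=\emptyset$ and $E(G)\setminus E(S)=\emptyset$, so $v(T)=e(T)=0$, $\defi(T)=0$, and the sum defining $q(T)$ is empty (every term $\deg_{G-E(S)}(u)$ vanishes), so $q(T)=0$ and $s(T)=0$. Hence $d(T)=\defi(T)-s(T)=0$.

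For part (ii), the key observation is that when $v(T)=0$ every vertex of $G$ already lies in $V(S)$, so each of the $e(T)$ edges in $E(G)\setminus E(S)$ has both endpoints in $V(S)$ and is therefore counted twice in $q(T)$. This gives $q(T)=2e(T)$, hence $s(T)=\varepsilon\cdot 0+\alpha\cdot 2e(T)=2\alpha e(T)$, and subtracting from $\defi(T)=3e(T)$ yields $d(T)=(3-2\alpha)e(T)$.

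For part (iii), let $v$ be the unique vertex of $V(G)\setminus V(S)$. The main step is to split $E(G)\setminus E(S)$ into the $\deg(v)$ edges incident to $v$ (each with its other endpoint in $V(S)$, contributing $1$ to $q(T)$) and the remaining $e(T)-\deg(v)$ edges, which have both endpoints in $V(S)$ and contribute $2$ each. This yields $q(T)=2e(T)-\deg(v)$, and combined with $v(T)=1$ and $\defi(T)=3e(T)-5$ a routine rearrangement gives $d(T)=(3-2\alpha)e(T)-5+\alpha\deg(v)-\varepsilon$. The entire lemma is pure bookkeeping, so no real obstacle is expected; the only point worth being careful about is that edges of $E(G)\setminus E(S)$ with both endpoints in $V(S)$ are indeed counted twice in $q(T)$, which justifies the factor $2$ in parts (ii) and (iii).
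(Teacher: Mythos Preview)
Your proposal is correct and matches the paper's approach: the paper simply states that these claims are established by straightforward calculations (citing Postle), and your argument carries out exactly that direct verification from the definitions of $v(T)$, $e(T)$, $\defi(T)$, $q(T)$, $s(T)$, and $d(T)$. The one subtlety you flag---that edges of $E(G)\setminus E(S)$ with both endpoints in $V(S)$ are counted twice in $q(T)$---is handled correctly.
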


Let $T = (a, G, S, L)$ be a canvas. We say $T$ is a \emph{chord} if $G$ consists of exactly $S$ and one
edge not in $S$ with both ends in $S$. We say $T$ is a \emph{tripod} if $G$ consists of exactly $S$ and one vertex not in $S$
with three neighbors in $S$. We say $T$ is \emph{singular} if $T$ is a chord or a tripod and \emph{non-singular} otherwise. We say $T$ is
\emph{normal} if no subcanvas of $T$ is singular.  By Lemma~\ref{lemma-small}, if $T$ is a chord, then $d(T)=3 - 2\alpha$,
and if $T$ is a tripod, then $d(T)=4 - 3\alpha - \varepsilon$.  Let $\gamma(T)$ be the triple $\Bigl(v(T),e(T), \sum_{v\in V(G)\setminus V(S)} |L(v)|\Bigr)$.
For canvases $T_1$ and $T_2$, we write $T_1\prec T_2$ if $\gamma(T_1)$ is lexicographically smaller than $\gamma(T_2)$, and $T_1\preceq T_2$ if
$T_1\prec T_2$ or $\gamma(T_1)=\gamma(T_2)$.

For a graph $H$, let $c(H)$ denote the number of components of $H$.
We now prove the key result, corresponding to Theorem~3.9 of~\cite{postle3crit}.

\begin{theorem}\label{thm-canvas}
If $T_0 = (a, G_0, S_0, L_0)$ is a non-singular critical canvas and $c(S_0)\le 2$, then $d(T_0)\ge 3$.
\end{theorem}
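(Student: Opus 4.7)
The plan is to proceed by strong induction on $T_0$ with respect to the ordering $\prec$. In the base case $v(T_0)\le 1$, Lemma~\ref{lemma-small} gives a closed formula for $d(T_0)$; combined with a short greedy argument showing that any $v\in V(G_0)\setminus V(S_0)$ must satisfy $\deg(v)\ge 3$ (otherwise any $(L:a)$-coloring of $G_0-v$ extends to $v$ since $|L(v)|\ge 3a$), the non-singularity assumption (ruling out chord and tripod) forces $d(T_0)\ge 3$. Specifically, for $v(T_0)=0$ non-singularity yields $e(T_0)\ge 2$ and hence $d(T_0)\ge 2(3-2\alpha)>3$, while for $v(T_0)=1$ either $\deg(v)\ge 4$ or there is a chord, either of which pushes the formula above $3$ with the constants $\alpha=3/8$ and $\varepsilon=1/88$.

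For the inductive step with $v(T_0)\ge 2$, I would apply Lemma~\ref{lemma-neiparel} to locate one of its five structural outcomes, then define a subgraph $H\subseteq G_0$ containing $S_0$ together with the identified local configuration (the chord edge in~(1); the attaching vertex and its two or three edges to $S_0$ in~(2); the claw and its four attachments in~(3); or the semi-neighboring path together with the at most two relaxed neighboring $2$-paths in~(4) and~(5)). Lemma~\ref{lemma-addit} then gives
\[
d(T_0) \;\ge\; d(T_0|H) + d(T_0/H).
\]
Since $H\neq G_0$ (as $v(T_0)\ge 2$ and $H$ absorbs only a bounded local structure) and $H$ is built from $S_0$ by adding vertices and edges, Corollary~\ref{cor-subsupccrit} makes $T_0/H$ a critical canvas and $c(H)\le c(S_0)\le 2$; when moreover $T_0/H$ is non-singular, the induction hypothesis supplies $d(T_0/H)\ge 3$. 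It then suffices to check $d(T_0|H)\ge 0$ directly from the triple $(v(T_0|H),e(T_0|H),q(T_0|H))$, which is determined by the local structure absorbed into $H$ and the number of its attachments to $S_0$, often via the $v(T)\le 1$ instance of Lemma~\ref{lemma-small}. When $T_0/H$ happens to be singular (a chord or tripod with respect to $H$), $d(T_0/H)$ is pinned down as $3-2\alpha$ or $4-3\alpha-\varepsilon$ respectively, and the combined graph $G_0$ is small enough that the bound $d(T_0)\ge 3$ can be verified by a direct computation on the residual structure.

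The main obstacle will be outcome~(5) of Lemma~\ref{lemma-neiparel}: the semi-neighboring $5$-path inside a $(\le\!2)$-relaxation. The absorbing subgraph $H$ can then contain up to twelve vertices outside $S_0$ (the six along the path plus up to six from the two relaxed $2$-paths), contributing only four direct attachments to $S_0$ from the path itself, so the lower bound on $d(T_0|H)$ becomes genuinely tight. The constants $\alpha=3/8$ and $\varepsilon=1/88$ in the definition of $s(T)$ are tuned exactly so that the combined inequality $d(T_0|H)+d(T_0/H)\ge 3$ remains valid in every subcase, including the borderline ones where $T_0/H$ itself is singular. The proof therefore reduces to a careful but essentially mechanical case analysis mirroring Theorem~3.9 of~\cite{postle3crit}, with the welcome simplification that the $10(c(S)-c(G))$ correction term appearing there is no longer needed, since Theorem~\ref{thm-cyl} handles the one- and two-face settings uniformly and so also does Lemma~\ref{lemma-neipaths} above.
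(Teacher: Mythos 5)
Your base case reasoning is sound, but the inductive step contains a genuine gap that prevents the argument from going through. The crucial claim ``it then suffices to check $d(T_0|H)\ge 0$'' is not achievable for the configurations that actually arise. Take the semi-neighboring $3$-path of outcome~(4) with $R=\emptyset$: the paper computes $d_T(T|H)\ge -3+2\alpha-3\varepsilon\approx -2.28$ for the natural absorbing subgraph $H$, and $d(T|H)$ itself is even smaller. The same is true (and worse) for outcome~(5). So the inequality $d(T_0)\ge d(T_0|H)+d(T_0/H)$ together with $d(T_0/H)\ge 3$ gives only $d(T_0)\gtrsim 0.7$, far short of $3$. The pure deficiency-counting route cannot close this gap no matter how $H$ is chosen, because the semi-neighboring path genuinely costs deficiency. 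Your proposal also cites ``the $v(T)\le 1$ instance of Lemma~\ref{lemma-small}'' for $T_0|H$, but the absorbing subgraphs in outcomes~(4) and~(5) have up to $4$ or $12$ vertices outside $S_0$, so that lemma does not apply.

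What the paper actually does is not a direct induction on $d$. It runs a minimal-counterexample argument in which the small negative value of $d_T(T|H)$ is used in the \emph{contrapositive} of a coloring-extension statement (Claim~\ref{cl-reduce}, built on Claim~\ref{cl-normal}, built on Corollary~\ref{cor-distflaws} and hence Theorem~\ref{thm-cyl}): because $d_T(T|H)$ is not quite as negative as $-3+\alpha+6\varepsilon$, every $(L:a)$-coloring of $H$ must extend to the subgraph $G-p_2p_3$. One then chooses a clever coloring of $H$ (using the ``nice canvas'' normalization of Claim~\ref{cl-hasnice} and the set-intersection constraint of Claim~\ref{cl-adj}) such that the extension to $G-p_2p_3$ can be repaired to an $(L:a)$-coloring of $G$ itself, contradicting criticality. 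In other words, the decisive step is a reduction in the coloring sense, not a discharging inequality, and it relies essentially on Theorem~\ref{thm-cyl}. Your proposal omits this machinery entirely (the nice-canvas construction, the $A_T$ sets, Claims~\ref{cl-1path}, \ref{cl-2path}, \ref{cl-adj}, \ref{cl-reduce}), and without it the semi-neighboring-path outcomes of Lemma~\ref{lemma-neiparel} cannot be handled.
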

\begin{proof}
Suppose for a contradiction that $T_0$ is a minimal counterexample, that is,
$d(T_0)<3$, but for every critical canvas $T=(a,G,S,L)\prec T_0$ with $c(S)\le 2$,
if $T$ is non-singular, then $d(T)\ge 3$.
Note that $3 > 4 - 3\alpha - \varepsilon > 3 - 2\alpha$, and thus $d(T)\ge 3 - 2\alpha$
even if $T$ is singular.

We say a canvas $T=(a,G,S,L)$ is \emph{close} to $T_0$ if $T\preceq T_0$, $T$ is critical, $c(S)\le 2$, and $d(T) \le  d(T_0) + 6\varepsilon$.
Note this implies $d(T)<3+6\varepsilon$.
The following general bound on $d_T(T|H)$ will be quite useful.

\claim{cl-dtth}{Let a canvas $T=(a,G,S,L)$ be close to $T_0$, and let $H$ be a proper subgraph of $G$ such that $S\subsetneq H$ and $c(H)\le 2$.
Then $d_T(T|H)<2\alpha+6\varepsilon$, if $T/H$ is not a chord, then $d_T(T|H)<3\alpha+7\varepsilon-1$, and if $T/H$ is non-singular, then $d_T(T|H)<6\varepsilon$.}
\begin{subproof}
Note that $T/H$ is critical by Corollary~\ref{cor-subsupccrit}, and since $T/H\prec T_0$, we have $d(T/H)\ge 3 - 2\alpha$.
By Lemma~\ref{lemma-addit}, $d(T)=d_T(T|H)+d(T/H)$, and since $d(T)<3+6\varepsilon$, it follows that $d_T(T|H)<2\alpha+6\varepsilon$.
If $T/H$ is not a chord, then $d(T/H)\ge 4-3\alpha-\varepsilon$, and thus the same inequality gives $d_T(T|H)<3\alpha+7\varepsilon-1$,
and if $T/H$ is non-singular, then $d(T/H)\ge 3$, giving $d_T(T|H)<6\varepsilon$.
\end{subproof}

This has a number of simple corollaries.
\claim{cl-cors}{Let a canvas $T=(a,G,S,L)$ be close to $T_0$.  Then
\begin{itemize}
\item[(i)] either $T$ is a chord or $S$ is an induced subgraph of $G$,
\item[(ii)] either $T$ is a tripod or each vertex $v\in V(G)\setminus V(S)$ has at most one neighbor in $S$,
\item[(iii)] $G$ does not contain a neighboring $3$-path $P$ of $S$, and
\item[(iv)] $G$ does not contain a neighboring claw $C$ of $S$.
\end{itemize}}
\begin{subproof}
Let us discuss the configurations one by one.
\begin{itemize}
\item[(i)] Suppose $G$ contains an edge $e\not\in E(S)$ with both ends in $S$, and let $H=S+e$.
Note that $d_T(T|H)=3-2\alpha>2\alpha+6\varepsilon$.
We conclude that $G=H$ by \refclaim{cl-dtth}; hence, $T$ is a chord.
\item[(ii)] Suppose a vertex $v\in V(G)\setminus V(S)$ has distinct neighbors $u_1,u_2\in v(S)$,
and let $H=S+u_1vu_2$.  Since $v$ has degree at least three in $G$, note that $H\neq G$, $q_T(H,S)\ge 1$, and
$d_T(T|H)\ge 1-\alpha-\varepsilon>3\alpha+7\varepsilon-1$,
and thus \refclaim{cl-dtth} implies $T/H$ is a chord.  Consequently, the neighbor of $v$ distinct from $u_1$ and $u_2$
also belongs to $S$, and $T$ is a tripod.
\item[(iii)] Consider a neighboring $3$-path $P$ of $S$; by (ii), each vertex $p\in V(P)$ has exactly one neighbor $v_p\in V(S)$.
Let $H=S+P+\{pv_p:p\in V(P)\}$.  Note that the endvertices of $P$ have degree at least three in $G$ and two in $H$, and they are
non-adjacent since the girth of $G$ is at least five.  Hence, $G\neq H$, $T/H$ is not a chord, $q_T(H,S)\ge 2$, and
$d_T(T|H)\ge 1-2\alpha-4\varepsilon=3\alpha+7\varepsilon-1$, which contradicts \refclaim{cl-dtth}.
\item[(iv)] Consider a neighboring claw $C$ of $S$; by (ii), each vertex $p\in V(C)$ has exactly one neighbor $v_p\in V(S)$.
Let $H=S+C+\{pv_p:p\in V(C)\}$.  Note that the rays of $C$ have degree at least three in $G$ and two in $H$,
and they do not have a common neighbor in $V(G)\setminus V(H)$ since the girth of $G$ is at least five.
Hence, $T/H$ is non-singular, $q_T(H,S)\ge 3$, and $d_T(T|H)\ge 1-\alpha-4\varepsilon>6\varepsilon$, which contradicts \refclaim{cl-dtth}.
\end{itemize}
\end{subproof}

Next, we give another general bound.

\claim{cl-normal}{Suppose $T_1 = (a,G_1, S_1, L_1)\preceq T_0$ is a normal critical canvas such that $c(S_1) \le 2$.
Let $H_1$ be a proper subgraph of $G_1$ containing $S_1$, and let $L'$ be a list assignment for $H_1$
such that $|L'(v)|\ge 3a$ for every $v\in V(H_1)\setminus V(S_1)$.  If $H_1$ is $(a,L',S_1)$-critical, then
$d(T_1) \ge 6 - \alpha$. Furthermore, if $|E(G_1) \setminus E(H_1)| \ge 2$, then $d(T_1) \ge 6$.}
\begin{subproof}
Since $H_1$ is $(a,L',S_1)$-critical, by Corollary~\ref{cor-distflaws} every component of $H_1$ intersects $S_1$,
and thus $c(H_1)\le c(S_1)\le 2$.
Consider the canvas $T'=(a,H_1,S_1,L')$.  Since $T_1\preceq T_0$ and $H_1$ is a proper subgraph of $G_1$,
we conclude that $T'\prec T_0$.  Since $T_1$ is normal, $T'$ is non-singular, and thus $d(T')\ge 3$.
Hence, $d(T_1|H_1) = d(T'_1)\ge 3$.

Since $H_1$ is a proper subgraph of $G_1$, $T_1/H_1$ is critical by Corollary~\ref{cor-subsupccrit}.
If $T_1/H_1$ is non-singular, then since $T_1/H_1\prec T_0$, we have $d(T_1/H_1)\ge 3$,
and thus $d(T_1)\ge d(T_1|H_1)+d(T_1/H_1)\ge 6$ by Lemma~\ref{lemma-addit}.

If $T_1/H_1$ is singular, then since $T_1$ is normal, $E(G_1)\setminus E(H_1)$ must contain an edge with an end in $V(H_1)\setminus V(S_1)$,
and thus $q_{T_1}(H_1,S_1)\ge 1$ and $d_{T_1}(T_1|H_1)\ge 3+\alpha$.  Hence, if $T_1/H_1$ is a tripod, implying $d(T_1/H_1)\ge 4 - 3\alpha - \varepsilon$,
we have $d(T_1)\ge d(T_1|H_1)+d_{T_1}(T_1/H_1)\ge 7-2\alpha-\varepsilon>6$ by Lemma~\ref{lemma-addit}.  And if $T_1/H_1$ is a chord, implying
$d(T_1/H_1)\ge 3-2\alpha$ and $|E(G_1) \setminus E(H_1)|=1$, we have $d(T_1)\ge d(T_1|H_1)+d_{T_1}(T_1/H_1)\ge 6-\alpha$.
\end{subproof}

We now apply this claim to canvases close to $T_0$.
\claim{cl-nosg}{Let a canvas $T=(a,G,S,L)$ be close to $T_0$, let $H$ be a proper subgraph of $G$ containing $S$, and let $L'$ be
a list assignment for $H$ such that $|L'(v)|\ge 3a$ for every $v\in V(H)\setminus V(S)$.  Then $H$ is not $(a,L',S)$-critical.}
\begin{subproof}
Suppose for a contradiction that $H$ is $(a,L',S)$-critical, and in particular $H\neq S$ and vertices of $V(H)\setminus V(S)$
have degree at least three in $H$.  Since $H\neq G$, we conclude that
$T$ is non-singular, and by \refclaim{cl-cors}(i) and (ii), $T$ is normal.  By \refclaim{cl-normal}, we conclude $d(T)\ge 6-\alpha$.
However, since $T$ is close to $T_0$, we have $d(T)<3+6\varepsilon$, which is a contradiction.
\end{subproof}

Let us give another application of \refclaim{cl-normal}, which we use to reduce semi-neighboring paths.
\claim{cl-reduce}{Let a canvas $T=(a,G,S,L)$ be close to $T_0$.  Let $H$ be a proper subgraph of $G$ containing $S$ such that $c(H)\le 2$ and the canvas $T/H$ is normal. If there exists a proper
subgraph $G'$ of $G$ such that $H \subseteq G'$ and there exists an $(L:a)$-coloring of $H$ that does not extend to an $(L:a)$-coloring of $G'$,
then $d_T(T|H) < -3 + \alpha + 6\varepsilon$. Furthermore, if $|E(G) \setminus E(G')|\ge 2$, then $d_T(T|H) < -3 + 6\varepsilon$.}
\begin{subproof}
Since there exists an $(L:a)$-coloring of $H$ that does not extend to an $(L:a)$-coloring of $G'$, some subgraph $G''\subseteq G'$
containing $H$ is $(a,L,H)$-critical.  By \refclaim{cl-normal} applied with $T_1=T/H$, $H_1=G''$, and $L'=L$,
we have $d(T/H)\ge 6-\alpha$, and if $|E(G) \setminus E(G')|\ge 2$, then $d(T/H)\ge 6$.
Since $T$ is close to $T_0$, we have $d(T)<3+6\varepsilon$.  By Lemma~\ref{lemma-addit},
$d_T(T|H)=d(T)-d(T/H)$, implying the claimed inequalities.
\end{subproof}

We say that a canvas $T=(a,G,S,L)$ is \emph{nice} if there exists an $(L:a)$-coloring $\psi_L$ of $S$
such that $L(v)=\psi_L(v)$ for every $v\in V(S)$, $|L(z)|=3a$ for every $z\in V(G)\setminus V(S)$, and if additionally $z$ has exactly one neighbor $v\in V(S)$,
then $\psi_L(v)\subset L(z)$.  Note that if $T$ is critical and nice, then $G$ is not $(L:a)$-colorable.
\claim{cl-hasnice}{If a canvas $T=(a,G,S,L)$ is close to $T_0$, then there exists a list assignment $L'$ such that the canvas $T'=(a,G,S,L')$
is nice and close to $T_0$.}
\begin{subproof}
Since $T$ is critical, there exists an $(L:a)$-coloring $\psi$ of $S$ that does not extend to an $(L:a)$-coloring of $G$.
Let $L'(v)=\psi(v)$ for $v\in V(S)$, let $L'(z)$ be a subset of $L(z)\cup \psi(v)$ of size $3a$ and containing $\psi(v)$ for every $z\in V(G)\setminus V(S)$
with a unique neighbor $v\in V(S)$, and let $L'(z)$ be a subset of $L(z)$ of size $3a$ for every other vertex $z\in V(G)\setminus V(S)$.
Clearly $T'$ is nice.  Since $|L'(v)|\le |L(v)|$ for every $v\in V(G)$, we have $T'\preceq T_0$.
Furthermore, $\psi$ extends to an $L'$-coloring of every proper subgraph of $G$ containing $S$, as otherwise there
would exist a proper $(a,L',S)$-critical subgraph of $G$ containing $S$, contradicting \refclaim{cl-nosg}.
Hence, $T'$ is critical, and thus $T'$ is close to $T_0$.
\end{subproof}

For a nice canvas $T=(a,G,S,L)$ and a vertex $v\in V(G)\setminus V(S)$, let $A_T(v)=L(v)\setminus \bigcup_{u\in V(S), uv\in E(G)} L(u)$
be the set of colors in $L(v)$ that are not excluded by the coloring $\psi_L$.  Note that if $v$ has exactly one neighbor in $S$, then $|A_T(v)|=2a$.

\claim{cl-adj}{Let $T=(a,G,S,L)$ be a nice canvas close to $T_0$.  Suppose $v\in V(G)\setminus V(S)$ has a neighbor $z\in V(S)$, and let $u\in V(G)\setminus V(S)$
be another neighbor of $v$.  Then $|A_T(v)\setminus A_T(u)|<a$.}
\begin{subproof}
Suppose for a contradiction that there exists an $a$-element set $c\subseteq A_T(v)\setminus A_T(u)$.  Let $H=S+zv$ and let $\psi$ be an $(L:a)$-coloring of $H$
obtained from $\psi_L$ by setting $\psi(z)=c$.  Since $v$ has degree at least three in $G$ and degree one in $H$, we have and $d_T(T|H)\ge -2-\varepsilon+\alpha>-3 + \alpha + 6\varepsilon$.
Because of the vertex $u$, $T$ is neither a chord nor a tripod, and thus by \refclaim{cl-cors}(i) and (ii), $S$ is an induced subgraph of $G$ and no vertex $x\in V(G)\setminus V(S)$ has
more than one neighbor in $S$; we conclude that the canvas $T/H$ is normal.  Letting $G'=G-uv$, \refclaim{cl-reduce} implies $\psi_L$ extends to an $(L:a)$-coloring $\varphi$ of $G'$.
We have $\varphi(u)\subseteq A_T(u)$, and thus $c\cap\varphi(u)=\emptyset$.  Consequently, $\varphi$ is an $(L:a)$-coloring of $G$, which is a contradiction.
\end{subproof}

Let us now bound degrees of vertices in neighboring $1$-paths and $2$-paths.
\claim{cl-1path}{Let $T=(a,G,S,L)$ be a canvas close to $T_0$.  If $P = p_0p_1$ is a neighboring $1$-path of $S$ in $G$ and $P$ is not a subpath of a neighboring $2$-path,
then either $\deg(p_0) \ge 4$ or $\deg(p_1) \ge 4$.}
\begin{subproof}
By \refclaim{cl-hasnice}, we can assume that $T$ is nice.
Suppose for a contradiction that $\deg(p_0) = \deg(p_1) = 3$.  For $i\in \{0,1\}$, let $z_i$ and $u_i$ be the neighbors of $p_i$ distinct from $p_{1-i}$
such that $z_i\in V(S)$.  By \refclaim{cl-cors}(ii), we have $u_0,u_1\not\in V(S)$.  Since $P$ is not a subpath of a neighboring $2$-path,
we have $A_T(u_i)=L(u_i)$ for $i\in \{0,1\}$.  By symmetry, assume $\deg(u_0)\ge \deg(u_1)$.  Let $H=S+u_0p_0p_1z_1+p_0z_0$.  Note that $\deg_{G-E(H)}(u_0)+\deg_{G-E(H)}(p_1)=\deg_G(u_0)$,
and thus $d_T(T|H)\ge-3+(\deg_G(u_0)-2)\alpha-3\varepsilon>3+6\varepsilon$, and $d_T(T|H)>-3 + \alpha + 6\varepsilon$ unless $\deg_G(u_0)=3$.
By \refclaim{cl-cors}(i) and (ii) and since $G$ has girth at least five, the canvas $T/H$ is normal.

Let $G'=G-p_1u_1$ if $\deg(u_0)>3$ and $G'=G-u_1$ if $\deg(u_0)=3$.  Recall that $|A_T(p_0)|=2a$, since $T$ is nice and $p_0$ has exactly one neighbor in $S$.
Choose $a$-element sets $\psi(u_0)\subseteq L(u_0)\setminus A_T(p_0)$, $\psi(p_0)\subseteq A_T(p_0)\setminus\psi(u_0)$, and $\psi(p_1)\subseteq A_T(p_1)\setminus\psi(p_0)$
arbitrarily.  Let $\psi(v)=\psi_L(v)$ for $v\in V(S)$.  Note that if $\deg_G(u_0)=3$, then $|E(G)\setminus E(G')|>2$; hence, \refclaim{cl-reduce} implies
that $\psi$ extends to an $(L:a)$-coloring $\varphi$ of $G'$.   If $\deg(u_0)=3$, then since $\deg(u_0)\ge \deg(u_1)$, we have $\deg(u_1)=3$, and thus $\varphi$ extends to an $(L:a)$-coloring of $G-p_1u_1$.
Change in $\varphi$ the color set of $p_1$ to an $a$-element subset $c_1$ of $A_T(p_1)\setminus\varphi(u_1)$, and change the color set of $p_0$ to an $a$-element subset
of $A_T(p_0)\setminus c_1$.  By the choice of $\psi(u_0)$, this gives an $(L:a)$-coloring of $G$, a contradiction.
\end{subproof}

\claim{cl-2path}{Let $T=(a,G,S,L)$ be a canvas close to $T_0$.  If $P = p_0p_1p_2$ is a neighboring $2$-path of $S$ in $G$, then $\deg(p_0)+\deg(p_1)+\deg(p_2)\ge 10$.}
\begin{subproof}
By \refclaim{cl-hasnice}, we can assume that $T$ is nice.
Suppose for a contradiction that $\deg(p_0) = \deg(p_1) = \deg(p_2)=3$.  For $i\in \{0,1,2\}$, let $z_i$ be the neighbors of $p_i$ in $S$
(unique by \refclaim{cl-cors}(ii)), and for $i\in \{0,2\}$, let $u_i$ be the neighbor of $p_1$ distinct from $z_i$ and $p_1$.
By \refclaim{cl-cors}(iii), we have $A_T(u_i)=L(u_i)$ for $i\in \{0,2\}$.

Let $H=S+u_0p_0p_1p_2z_2+p_0z_0+p_1z_1$.  Note that $\deg_{G-E(H)}(u_0)+\deg_{G-E(H)}(p_1)=\deg_G(u_0)$,
and thus $d_T(T|H)\ge -2+(\deg_G(u_0)-3)\alpha-4\varepsilon>-3 + \alpha + 6\varepsilon$.
By \refclaim{cl-cors}(i), (ii), and (iii), and since $G$ has girth at least five, the canvas $T/H$ is normal.

Let $G'=G-p_2u_2$.  Choose $a$-element sets $\psi(u_0)\subseteq L(u_0)\setminus A_T(p_0)$, $\psi(p_0)\subseteq A_T(p_0)\setminus\psi(u_0)$, $\psi(p_1)\subseteq A_T(p_1)\setminus\psi(p_0)$,
and $\psi(p_2)\subseteq A_T(p_2)\setminus\psi(p_1)$ arbitrarily.  Let $\psi(v)=\psi_L(v)$ for $v\in V(S)$.  \refclaim{cl-reduce} implies
that $\psi$ extends to an $(L:a)$-coloring $\varphi$ of $G'$.  Change in $\varphi$ the color set of $p_2$ to an $a$-element subset $c_2$ of $A_T(p_2)\setminus\varphi(u_2)$,
the color set of $p_1$ to an $a$-element subset $c_1$ of $A_T(p_1)\setminus c_2$, and the color set of $p_0$ to an $a$-element subset
of $A_T(p_0)\setminus c_1$.  By the choice of $\psi(u_0)$, this gives an $(L:a)$-coloring of $G$, a contradiction.
\end{subproof}

Next, let us argue about relaxations.
\claim{cl-relax}{Let $T=(a,G,S,L)$ be a canvas close to $T_0$.  If $T'$ is a relaxation of $T$, then $T'$ is critical and $d(T')\le d(T)+3\varepsilon$.}
\begin{subproof}
Suppose $T'$ is obtained from $T$ by relaxing a $2$-neighboring path $P=p_0p_1p_2$.  Let $H$ be the subgraph of $G$ consisting of $S$, $P$, and the edges
between them, so that $T'=T/H$.  Note that the endpoints of $P$ have degree at least three in $G$ but two in $H$, and thus $H\neq G$.
Hence, $T'$ is critical by Corollary~\ref{cor-subsupccrit}.  By \refclaim{cl-2path}, we have $q_T(H,S)\ge 3$, and thus $d_T(T|H)\ge -3\varepsilon$.
By Lemma~\ref{lemma-addit}, $d(T')=d(T/H)=d(T)-d_T(T|H)\le d(T)+3\varepsilon$.
\end{subproof}

Iterating \refclaim{cl-relax}, we obtain the following corollary.
\claim{cl-close}{For $k\le 2$, every $k$-relaxation of $T_0$ is close to $T_0$.}

We now deal with semi-neighboring paths.
\claim{cl-semi3path}{If $T=(a,G,S,L)$ is a canvas close to $T_0$, then $G$ does not contain a semi-neighboring $3$-path of $S$.}
\begin{subproof}
By \refclaim{cl-hasnice}, we can assume that $T$ is nice.
Suppose for a contradiction $P=p_0p_1p_2p_3$ is a semi-neighboring $3$-path of $S$ in $G$.  For $i\in\{0,1,3\}$, the neighbor of $p_i$ in $S$
is unique by \refclaim{cl-cors}(ii).  Let $R$ be the set of vertices in $V(G)\setminus (V(S)\cup\{p_0,p_1\})$ with a neighbor both in $\{p_0,p_1\}$ and in $S$.
Note that $p_2\not\in R$ and $|R|\le 1$ by \refclaim{cl-cors}(iii) and (iv).  Since $G$ has girth at least five,
there is at most one edge between $\{p_0,p_1,p_2\}$ and $R$.  Let $H=G[V(S)\cup R\cup \{p_0,p_1,p_2\}]$.  We have
$d_T(T|H)\ge -3+2\alpha-3\varepsilon$ if $R=\emptyset$ using \refclaim{cl-1path}, and
$d_T(T|H)\ge -2+\alpha-4\varepsilon$ if $R\neq\emptyset$ using \refclaim{cl-2path}.
In either case, $d_T(T|H)>-3 + \alpha + 6\varepsilon$.
Note also that the canvas $T/H$ is normal: $H$ is an induced subgraph, no vertex can have three neighbors in $R\cup\{p_0,p_1,p_2\}$ since $G$ has girth at least
five, no vertex $v\in V(G)\setminus V(S)$ can have more than one neighbor in $S$ by \refclaim{cl-cors}(ii), and $v\in V(G)\setminus V(H)$ cannot have two neighbors
in $R\cup\{p_0,p_1,p_2\}$ and one neighbor in $S$: since $v\not\in R$, $v$ would be adjacent the vertex of $R$ (and to $p_2$), contradicting \refclaim{cl-cors}(iii).

Since $|A_T(p_3)|=2a$ and $|A_T(p_2)|=3a$, there exists an $(L:a)$-coloring $\psi$ of $H$ extending $\psi_L$ such that
$\psi(p_2)\cap A_T(p_3)=\emptyset$.  By \refclaim{cl-reduce}, $\psi$ extends to an $(L:a)$-coloring of $G-p_2p_3$,
giving an $(L:a)$-coloring of $G$ by the choice of $\psi(p_2)$.  This is a contradiction.
\end{subproof}

\claim{cl-semi5path}{If $T=(a,G,S,L)$ is a canvas close to $T_0$, then $G$ does not contain a semi-neighboring $5$-path of $S$.}
\begin{subproof}
By \refclaim{cl-hasnice}, we can assume that $T$ is nice.
Suppose for a contradiction $P=p_0p_1p_2p_3p_4p_5$ is a semi-neighboring $5$-path of $S$ in $G$.  By symmetry, assume that $\deg(p_2)\ge \deg(p_3)$.
For $i\in\{0,1,4,5\}$,
the neighbor of $p_i$ in $S$ is unique by \refclaim{cl-cors}(ii).
For $i\in \{0,4\}$, let $R_i$ be the set of vertices in $V(G)\setminus (V(S)\cup\{p_i,p_{i+1}\})$ with a neighbor both in $\{p_i,p_{i+1}\}$ and in $S$,
and let $S_i=R_i\cup \{p_i,p_{i+1}\}$.
By \refclaim{cl-cors}(iii) and (iv), observe that $|R_i|\le 1$, $S_0\cap S_4=\emptyset$, and the distance between $S_0$ and $S_4$ is at least two.
By \refclaim{cl-semi3path}, $p_2,p_3\not\in S_0\cup S_4$ and the distance between $S_0$ and $S_4$ in $G-V(S)$ is at least three.
Let $H=G[V(S)\cup S_1\cup S_2\cup \{p_2\}]$.  A straightforward case analysis using \refclaim{cl-1path} and \refclaim{cl-2path} shows that
$d_T(T|H)\ge -4+\deg(p_2)\alpha-5\varepsilon\ge -4+\deg(p_3)\alpha-5\varepsilon$.
Note also that the canvas $T/H$ is normal: $H$ is an induced subgraph, no vertex of $V(G)\setminus V(H)$ can have three neighbors in $S_1\cup S_2\cup\{p_2\}$
since $G$ has girth at least five and the distance between $S_1$ and $S_2$ is at least three
in $G-V(S)$,  no vertex $v\in V(G)\setminus V(H)$ can have more than one neighbor in $S$ by \refclaim{cl-cors}(ii), and $v\in V(G)\setminus V(H)$ cannot have two neighbors
in $S_1\cup S_2\cup\{p_2\}$ and one neighbor in $S$ since $v\not\in R_1\cup R_2$ and by \refclaim{cl-cors}(iii).

Let $\psi$ be an $(L:a)$-coloring of $H$ extending $\psi_L$, in which the color sets of vertices of $V(H)\setminus V(S)$ are chosen as follows.
By \refclaim{cl-adj}, we have $|A_T(p_4)\setminus A_T(p_3)|<a$, and thus we can choose $\psi(p_4)$ as an $a$-element subset of $A_T(p_4)\cap A_T(p_3)$.
Then $|A_T(p_3)\setminus \psi(p_4)|=2a$, and since $|A_T(p_2)|=3a$, we can choose $\psi(p_2)$ as an $a$-element subset of $A_T(p_2)\setminus (A_T(p_3)\setminus \psi(p_4))$.
We extend $\psi$ to other vertices of $H$ greedily, which is possible since $p_2$ and $p_4$ are in different components of the forest $H-V(S)$.
If $\deg(p_3)=3$, then let $G'=G-p_3$, otherwise let $G'=G-p_2p_3$.  If $\deg(p_3)=3$, then $|E(G)\setminus E(G')|>2$ and $d_T(T|H)\ge-4+3\alpha-5\varepsilon=-3 + 6\varepsilon$.
If $\deg(p_3)\ge 4$, then $d_T(T|H)\ge-4+4\alpha-5\varepsilon=-3 + \alpha+6\varepsilon$.  In either case, \refclaim{cl-reduce} implies
$\psi$ extends to an $(L:a)$-coloring $\varphi$ of $G'$.  If $\deg(p_3)=3$, we can furthermore greedily extend $\varphi$ to an $(L:a)$-coloring of $G-p_2p_3$.
However, $\varphi(p_3)\subset A_T(p_3)\setminus \psi(p_4)$, and thus $\varphi(p_3)\cap \psi(p_2)=\emptyset$ by the choice of $\psi(p_2)$.  Hence, $\varphi$ is an $(L:a)$-coloring of $G$,
which is a contradiction.
\end{subproof}

We are now ready to finish the proof of Theorem~\ref{thm-canvas}.  We apply Lemma~\ref{lemma-neiparel} to $T_0$.  Since $T_0$ is non-singular,
\refclaim{cl-cors} excludes the conclusions (1), (2), and (3) of Lemma~\ref{lemma-neiparel}.  Hence, by (4) or (5), there exists a $(\le\!2)$-relaxation $T=(a,G_0,S,L_0)$ of $T_0$
and a semi-neighboring $3$- or $5$-path of $S$ in $G_0$.  By \refclaim{cl-close}, $T$ is close to $T_0$.  However, this contradicts \refclaim{cl-semi3path} or \refclaim{cl-semi5path}.
\end{proof}

Theorem~\ref{thm-hypcyl} now follows by a straightforward application of Euler's formula.
\begin{proof}[Proof of Theorem~\ref{thm-hypcyl}]
Since $G$ has girth at least five and has faces bounded by cycles $C_1$ and $C_2$, Euler's formula gives
\begin{align*}
3|E(G)|-5|V(G)|+|C_1|+|C_2|&\le 3|E(G)|-5|V(G)|+10+\sum_{f\in F(G)} (|f|-5)\\
&=5(|E(G)|-|V(G)|-|F(G)|+2)\le 0.
\end{align*}
Consider the critical canvas $T=(a,G,C_1\cup C_2,L)$.  If $T$ is singular, then $|V(G)|\le |C_1|+|C_2|+1 < 89(|C_1|+|C_2|)$.
Hence, we can assume $T$ is non-singular, and thus by Theorem~\ref{thm-canvas}, we have
\begin{align*}
3&\le d(T)\le 3e(T)-(5+\varepsilon)v(T)\\
&=3(|E(G)|-|C_1|-|C_2|)-(5+\varepsilon)(|V(G)|-|C_1|-|C_2|)\\
&=3|E(G)|-5|V(G)|+(2+\varepsilon)(|C_1|+|C_2|)-\varepsilon|V(G)|\\
&\le (1+\varepsilon)(|C_1|+|C_2|)-\varepsilon|V(G)|.
\end{align*}
Consequently, $|V(G)|<\tfrac{1+\varepsilon}{\varepsilon}(|C_1|+|C_2|)=89(|C_1|+|C_2|)$.
\end{proof}

\end{document}